 \numberwithin{equation}{section}
\DeclareMathOperator{\im}{im}
\renewcommand{\phi}{\varphi}
\renewcommand{\epsilon}{\varepsilon}
\newcommand{\ola}{\overleftarrow}
\newcommand{\ora}{\overrightarrow}
\newcommand{\F}{\ensuremath{\Fsk}\xspace}
\newcommand{\Fsk}{\textnormal{\bf Fsk}\xspace}
\newcommand{\Fpm}{\ensuremath{\F_{pm}}\xspace}
\newcommand{\Fpmfat}{\ensuremath{\F^{fat}_{pm}}\xspace}
\newcommand{\Fl}{\ensuremath{\F_{\lambda}}\xspace}
\newcommand{\Flfat}{\ensuremath{\F^{fat}_{\lambda}}\xspace}
\newcommand{\Fla}{\ensuremath{\F_{\lambda\alpha}}\xspace}
\newcommand{\Ul}{\ensuremath{U_{\lambda}}\xspace}
\newcommand{\Ula}{\ensuremath{U_{\lambda\alpha}}\xspace}
\newcommand{\Flafat}{\ensuremath{\F^{fat}_{\lambda\alpha}}\xspace}
\newcommand{\Far}{\ensuremath{\F_{\alpha\rho}}\xspace}
\newcommand{\Farfat}{\ensuremath{\F^{fat}_{\alpha\rho}}\xspace}
\newcommand{\C}{\ensuremath{\mathcal C}\xspace}
\renewcommand{\O}{\ensuremath{\mathcal O}\xspace}
\renewcommand{\S}{\ensuremath{\mathcal T}\xspace}
\newcommand{\XX}{\ensuremath{\mathbb X}\xspace}
\newcommand{\NN}{\ensuremath{\mathbb N}\xspace}
\newcommand{\Cat}{\textsf{Cat}\xspace}
\newcommand{\dbot}{\ensuremath{\mathbf{\Delta}_\bot}\xspace}
\newcommand{\op}{\ensuremath{^{\textnormal{op}}}}
\newcommand{\oprev}{\ensuremath{^{\textnormal{oprev}}}}
\newcommand{\ord}[1]{\ensuremath{\mathbf{#1}}}
\newcommand{\Tam}{\mathrm{Tam}}
\newcommand{\lra}{\longrightarrow}
\newcommand{\Lra}{\Longrightarrow}
\newcommand{\ox}{\otimes}
\newcommand{\x}{\times}
  \newtheorem{proposition}{Proposition}[section]
  \newtheorem{lemma}[proposition]{Lemma}
  \newtheorem{corollary}[proposition]{Corollary}
  \newtheorem{theorem}[proposition]{Theorem}
  \theoremstyle{definition}
  \newtheorem{example}[proposition]{Example}
  \theoremstyle{remark}
  \newtheorem{remark}[proposition]{Remark}
  \newcounter{c}
  \newcommand{\etyk}[1]{\vspace{-7.4mm}$$\begin{equation}\Label{#1}
  \addtocounter{c}{1}}
  \renewcommand{\]}{\ifnum \value{c}=1 $$\else \end{equation}\fi}
\begin{document}

 \title{Triangulations, orientals, and skew monoidal categories}

\author{Stephen Lack}
\address{Department of Mathematics, Macquarie University, NSW 2109, Australia}
\email{steve.lack@mq.edu.au}

\author{Ross Street}
\address{Department of Mathematics, Macquarie University, NSW 2109, Australia}
\email{ross.street@mq.edu.au}

\thanks{Both authors gratefully acknowledge the support of the Australian Research Council Discovery Grant DP1094883; Lack acknowledges with equal gratitude the support of an Australian Research Council Future Fellowship}

\date{\today}
\subjclass[2010]{18D10, 06A07, 52B20, 18D05, 16T05}

\begin{abstract}
A concrete model of the free skew-monoidal category 
$\mathrm{Fsk}$ on a single generating object is obtained. 
The situation is clubbable in the sense of G.M. Kelly, 
so this allows a description of the free skew-monoidal category on any category.
As the objects of $\mathrm{Fsk}$ are meaningfully bracketed words in 
the skew unit $I$ and the generating object $X$, it is necessary to
examine bracketings and to find the appropriate kinds of morphisms between them. 
This leads us to relationships between triangulations of polygons,
the Tamari lattice, left and right bracketing functions, and the orientals. 
A consequence of our description of $\mathrm{Fsk}$ is a coherence theorem 
asserting the existence of a strictly structure-preserving faithful functor 
$\mathrm{Fsk} \lra \Delta_{\bot}$ where $\Delta_{\bot}$ is the skew-monoidal category of finite non-empty ordinals and first-element-and-order-preserving functions. This in turn provides a complete solution to the word problem for skew monoidal categories.     
\end{abstract}
  
\maketitle


\section{Introduction}

Counting the number of triangulations of a 
convex polygon is a famous problem, a brief 
history of which can be found in 
\cite[page 212]{Stanley-EC2}. It seems that the problem
is due to Euler, who proposed it to Segner. Segner
gave a recurrence relation for the solution, and 
Euler gave the formula appearing on the left
of the following equation
$$ \frac21 \cdot\frac63 \cdot\frac{10}{5}\cdot \ldots \cdot\frac{4n-2}{n+1}=
\frac1{n+1}\frac{(2n)!}{n!n
!} =\frac1{n+1}\binom{2n}{n}$$
for the number of triangulations of a convex polygon with $n+2$ vertices. 

This can easily be transformed to the expressions on the right, whose values are now known as the Catalan numbers, and it seems to have been Catalan \cite{Catalan} who realized the equivalence between triangulations of a polygon and bracketings. The following diagram illustrates how a triangulation of a 6-gon provides a bracketing
for a 5-fold product.
$$\xymatrix @C3pc {
0 \ar[d]_{X_1} \ar[drrr]^{X_1((X_2X_3)X_4)} \ar[rrr]^{(X_1((X_2X_3)X_4))X_5}  &&& 5 \\
1 \ar[dr]_{X_2} \ar[rrr]^{(X_2X_3)X_4} \ar[drr]^{X_2X_3} &&& 
4 \ar[u]_{X_5} \\
& 2 \ar[r]_{X_3} & 3 \ar[ur]_{X_4} 
}$$
In the case of an associative multiplication, of course there is only one product of an ordered sequence of terms, but the combinatorics of such bracketings becomes significant in the context of non-associative multiplications.

Tamari \cite{TamariThesis} 
considered a partial order on the set
of all such bracketings, where for bracketed
words $U$, $V$, and $W$ we have $(UV)W\le U(VW)$,
and where if $V\le V'$ then $UV\le UV'$ and
$VW\le V'W$.  The poset $\Tam_n$ of all such
bracketings of an $n$-fold product is in 
fact a lattice: this was proved in \cite{TamariLattice},
but a more transparent proof was found in 
\cite{TamariHuang}, using a combinatorial description
of bracketings similar to the one we shall use
below. See \cite{TamariFestschrift} for many articles
related to Tamari's work, including its connections
to the associahedra of Stasheff \cite{StasheffAssociahedra}.

Mac~Lane introduced the notion of monoidal category
\cite{MacLane-monoidal}, which involves a functorial
product, generally called the ``tensor product'', which 
need not be associative in the literal sense, but is 
associative up to natural isomorphism. Similarly there
is a ``unit object'', which need not satisfy the usual
unit laws in the literal sense, but does satisfy them
up to natural isomorphism. These associativity and
unit isomorphisms are required to satisfy five 
compatibility conditions, and Mac~Lane showed that 
these five conditions imply, in a precise sense, that
all diagrams built up using only these  ``structure 
isomorphisms''
must commute. The fact that all diagrams commuted,
in this sense, was summarized by saying that the 
structure of monoidal category was {\em coherent};
it is then a consequence that any monoidal category
is monoidally equivalent to a {\em strict} monoidal
category, in which the structure isomorphisms are
in fact identities.

As interest turned from monoidal categories to other
structures borne by categories, in which the 
``all diagrams commute''
condition did not hold, the focus came to be on
determining which diagrams did commute. A
deeper understanding \cite{Kelly-AbstractApproachCoherence}
of these coherence questions
came when they were seen to be part of the problem
of determining the free structure of the given type
on any category.  In many of the structures under 
interest it was seen that the free structure on any 
category could be obtained from the free structure 
on a single generating object (that is, on the one-object
discrete category 1). In these cases the structure
was said to be ``clubbable''.

Various weakenings of the notion of monoidal category have been studied, including weakenings obtained by dropping the requirement that the maps expressing associative and unit laws be invertible. Once invertibility is dropped, a particular choice of the direction of the maps needs to be made. Once such choice has been studied recently by   Szlach\'anyi \cite{Szlachanyi-skew} under the name of {\em skew monoidal category}. The crucial insight of Szlach\'anyi was that this structure provides a significant simplification of the notion of bialgebroid (or $\times_R$-bialgebra): see \cite{Takeuchi-bialgebra,Lu-QuantumGroupoids,Xu-QuantumGroupoids} for the origins of these notions, or the survey article \cite{Bohm-HopfAlgebroids} for an overview of the relations between them and the many applications they have found. In the papers \cite{skew,skew-EH} we have developed the connections between skew monoidal categories and quantum categories \cite{DayStreet-QuantumCats}. Perhaps surprisingly, it is also possible \cite{Street-SkewClosed} to do enriched category theory over a skew-monoidal base; we intend to develop this further elsewhere. 

The structure of skew monoidal category is clubbable, in the above sense, and so the free skew monoidal category on an arbitrary category exists, and can be described in terms of the free skew monoidal category on \ord1. Our main goal is to provide an explicit model for the free skew monoidal category \Fsk on \ord1. We do this in Theorem~\ref{thm:Fsk}. Unlike the situation of monoidal categories, it is not the case in \Fsk that all diagrams commute. What we do have is a faithful functor from \Fsk to the simplex category $\mathbf{\Delta}$: see Corollary~\ref{cor:faithful}. This means that in order to determine whether two expressions built up, using the operations of tensor and composition, out of the structure maps for skew monoidal categories agree, it suffices to check whether they agree in $\mathbf{\Delta}$. This is the sense in which we claim to have solved the word problem for skew monoidal categories. 

In March 2012, Kornel Szlach\'anyi told us he had started thinking about coherence in skew-monoidal categories. In June 2012, he told us he had proved that the homs were finite in the free skew-monoidal category generated by a set of objects. Soon we became hooked on the problem and this paper is the result.

Our secondary goal is to describe the connections between the Tamari lattices and the {\em orientals} of \cite{orientals}. The orientals come from the area of higher category theory. They are (strict) $\omega$-categories which are freely generated by a simplex, and can be used to define the nerve of an $\omega$-category. They arose originally in connection with non-abelian cohomology. A cubical version of the orientals was constructed with varying formalities by Iain Aitchison \cite{Aitchison-cubes}, the second author  \cite{Street-ParityComplexes}, and Michael Johnson \cite{Johnson-thesis}.

As was observed without proof in \cite{orientals}, triangulations of polygons appear as certain 2-cells in the orientals, and so a connection between orientals and the Tamari lattices is to be expected; here we make one such connection precise. This connection only uses a small fragment of the structure of the orientals. The paper \cite{KapranovVoevodsky-pasting} suggests further connections, and uses higher structure in the orientals to define {\em higher Stasheff posets}, which have also been studied under the name {\em higher Tamari posets} \cite{TamariFestschrift}.

A further interesting connection is mentioned in \cite{KapranovVoevodsky-pasting}, namely to the weak Bruhat order on the symmetric groups. It turns out (see also \cite{LodayRonco}) that the Tamari posets are quotients of the weak Bruhat orders; for example, the 5-element Tamari poset $\Tam_4$ is a quotient of the 6-element poset arising from the weak Bruhat order on the symmetric group $S_3$. The higher Bruhat orders of \cite{ManinSchechtman} can also be seen to arise out of cubical versions of the orientals, such as referred to above: see~\cite{KapranovVoevodsky-pasting}.

We now outline the contents of the paper. In Section~\ref{sect:axioms}, we recall the definition of skew monoidal category, as well as making precise what we mean by free skew monoidal category. In Section~\ref{sect:simplicial}, we recall some basic facts and notation about the simplex category $\mathbf{\Delta}$, and how it relates to our problem. In Section~\ref{sect:overview} we sketch briefly several different ways to encode bracketings, and describe how to translate between them. One of these ways to encode bracketings is provided by the ``bracketing functions'', which were central to \cite{TamariHuang}; we describe these in more detail in Section~\ref{sect:bracketings}, and prove a few key facts about them which will be needed later.  Then in Section~\ref{sect:orientals}, we give a full proof of the equivalence between bracketing functions and certain 2-cells in the orientals. We also show how to describe the order relation between bracketings in terms of 3-cells and 4-cells in the orientals. In the remainder of the paper we focus on our main problem of describing the free skew monoidal category on one object. We work towards this by first considering various simpler structures than skew monoidal category, and describing the corresponding free objects. Thus in Section~\ref{sect:a} we consider just a skew associative tensor product (with no unit), and in Section~\ref{sect:l} we consider an arbitrary tensor product (no associativity) with a left skew unit, while in Section~\ref{sect:la} we combine these two, to get a skew associative tensor product with a compatible left skew unit. In Section~\ref{sect:r} we describe how to dualize all of this and so obtain right skew units; while in Section~\ref{sect:lar} we come to the full structure of skew monoidal category. Finally in Section~\ref{sect:club} we recall from \cite{Kelly-AbstractApproachCoherence} enough of the theory of clubs to allows us to describe the free skew monoidal category on an arbitrary category. 

\section{Axioms, non-axioms, and freeness}
\label{sect:axioms}

A {\em (left) skew-monoidal category} is a category $\C$ equipped with an object $I$ (called the {\em unit} or {\em skew unit}),
a functor $\otimes : \C \times \C \to \C$ (called {\em tensor product}), and
natural families of {\em lax constraints} having the directions 
\begin{equation}
\alpha_{XYZ} : (X\otimes Y)\otimes Z \lra X\otimes (Y\otimes Z)
\end{equation}
\begin{equation}
\lambda_X : I\otimes X \lra X
\end{equation}
\begin{equation}
\rho_X :  X \lra X\otimes I
\end{equation}
subject to five conditions: 

\begin{equation}\label{pentagon}
\xymatrix{
& (W\otimes X)\otimes (Y\otimes Z) \ar[rd]^-{{\alpha_{W,X,Y\otimes Z}}}  & \\
((W\otimes X)\otimes Y)\otimes Z \ar[ru]^-{{\alpha_{W\otimes X,Y,Z}}} \ar[d]_-{{\alpha_{W,X,Y}} \otimes 1_Z} & & W\otimes (X\otimes (Y\otimes Z))  \\
(W\otimes (X\otimes Y))\otimes Z \ar[rr]_-{{\alpha_{W,X\otimes Y,Z}}} & & W\otimes ((X\otimes Y)\otimes Z) \ar[u]_-{1_W\otimes {\alpha_{X, Y,Z}}} }
\end{equation}

\begin{equation}\label{leftunit}
\xymatrix{
(I\otimes X)\otimes Y \ar[rd]_{\lambda_{X} \otimes 1_Y}\ar[rr]^{\alpha_{I,X,Y}}   && I\otimes (X\otimes Y) \ar[ld]^{\lambda_{X\otimes Y}} \\
& X\otimes Y  &
}
\end{equation}

\begin{equation}\label{midunit}
\xymatrix{
(X\otimes I)\otimes Y \ar[rr]^-{\alpha_{X,I,Y}} && X\otimes (I\otimes Y) \ar[d]^-{1_X\otimes \lambda_Y} \\
X\otimes Y \ar[u]^-{\rho_X \otimes 1_Y} \ar[rr]_-{1_{X\otimes Y}} && X\otimes Y}
\end{equation}

\begin{equation}\label{rightunit}
\xymatrix{
(X\otimes Y)\otimes I \ar[rr]^{\alpha_{I,X,Y}}   && X\otimes (Y\otimes I)  \\
& X\otimes Y \ar[lu]^{\rho_{X \otimes Y}}  \ar[ru]_{1_X\otimes \rho_Y}&
}
\end{equation}

\begin{equation}\label{unitunit}
\xymatrix{
I \ar[rd]_{\rho_I}\ar[rr]^{1_I}   && I  \\
& I\otimes I \ar[ru]_{\lambda_I} & .
}
\end{equation}
Of course a monoidal category is precisely a skew monoidal category in which the lax constraints are invertible.

We shall sometimes save space by omitting the tensor product symbol, writing $XY$ for $X\ox Y$; likewise, we shall sometimes omit the subscripts on the natural transformations $\alpha$, $\lambda$, and $\rho$.

Notice that we obtain idempotents
$$\epsilon^{\ell}_{X,Y} : (XI)Y \stackrel{\alpha} \lra X (I Y)
\stackrel{1\lambda} \lra X\otimes Y
\stackrel{\rho 1 } \lra (XI) Y \ ,$$
$$\epsilon^{r}_{X,Y} : X (I Y) \stackrel{1\lambda} \lra X\otimes Y
\stackrel{\rho 1} \lra (X I) Y \stackrel{\alpha} \lra X (I Y)$$
and
$$\epsilon_0 : I  I \stackrel{\lambda} \lra I \stackrel{\rho}\lra I I $$
which are not identities in general. Moreover, 
$$\alpha_{X,I,Y} : ((X\otimes I) \otimes Y, \epsilon^{\ell}_{X,Y}) \lra (X\otimes (I \otimes Y), \epsilon^{r}_{X,Y})$$
is a morphism of idempotents.

As explained in the introduction, our main goal is to give an explicit model for the free skew monoidal category on \ord 1. We have now given the precise definition of skew monoidal category, but we have not yet explained what we mean by the free skew monoidal category on a category \XX. 

We mean by this a skew monoidal category $\Fsk(\XX)$ equipped with a functor $X\colon \XX\to\Fsk(\XX)$ satisfying the following universal property. For any skew monoidal category \C and any functor $C\colon\XX\to\C$, there is a unique functor $F\colon\Fsk(\XX)\to\C$ which strictly preserves the skew monoidal structure and whose composite with $X$ is $C$.

We shall primarily be interested in the case where \XX is the terminal category \ord 1, in which case we write \Fsk for $\Fsk(\ord 1)$. Then $X$ is just an object of \Fsk
and $C$ is just an object of \C, and we require, for each $C$, a unique structure-preserving functor $F\colon\Fsk\to\C$ sending $X$ to $C$.

There is also a weaker, ``bicategorical'', meaning of free. This would involve functors from \Fsk to \C which preserve the structure only up to suitably coherent isomorphism, which send $G$ to an object isomorphic to $C$, and which are unique only up to a unique isomorphism. Whereas the free structures in the strict sense of the previous paragraph are determined up to isomorphism, these bicategorically free structures are determined only up to equivalence. A discussion of these matters can be found in \cite{JoyalStreet-braided}.

\section{Simplicial matters}
\label{sect:simplicial}

In this section we recall some standard material about the simplex category $\mathbf{\Delta}$, before using it to give an example of a skew monoidal category that will play an important role in this paper.

Recall the algebraists' simplicial category $\mathbf{\Delta}$.
The objects are the finite ordinals $$\mathbf{n} = \{0,1,\dots ,n-1\} \ .$$ There are two conflicting conventions for naming the objects of $\mathbf{\Delta}$. In algebraic contexts, and especially where the operation of ordinal sum is significant, the objects are generally named as we have done above; in topological contexts, where the objects are being thought of as simplexes rather than ordinals, and the dimension of these simplexes is important, the notation $[n-1]$ is common for what we are calling $\ord n$.

The morphisms are order-preserving functions $\xi : \mathbf{m} \lra \mathbf{n}$. We shall sometimes regard $\mathbf{\Delta}$ as a 2-category: for morphisms $\xi,\xi'\colon\ord m\to\ord n$ there is a 2-cell from $\xi$ to $\xi'$ just when $\xi(i)\le\xi'(i)$ for all $i\in\ord m$; we then write $\xi\le\xi'$. 

If $0\le i\le n$, we write $\delta_i\colon\ord n\to\ord{n+1}$ for the unique order-preserving injection whose image does not contain $i$; explicitly, $\delta_i(k)$ is equal to $k$ if $k<i$ and $k+1$ otherwise. Similarly, if $0\le i\le n-1$, we write $\sigma_i\colon\ord{n+1}\to\ord n$ for the unique order-preserving surjection which identifies $i$ and $i+1$; explicitly, $\sigma_i(k)$ is equal to $k$ if $k\le i$ and $k-1$ otherwise. 

As is well-known, the category $\mathbf{\Delta}$ is generated by these $\delta_i$ and $\sigma_i$, subject to certain relations \cite{CWM}. We wish to think of these relations as the following (directed) rewrite rules, which imply the existence of normal forms. 
\begin{align*}
\delta_i\delta_j  &\to \delta_{j+1}\delta_i   \tag{$i\le j$} \\
\sigma_i \sigma_{j+1}&\to \sigma_j\sigma_i  \tag{$i\le j$} \\
\sigma_j \delta_i &\to \delta_i\sigma_{j-1} \tag{$i<j$} \\
\sigma_j \delta_i &\to 1 \tag{$i=j,j+1$} \\
\sigma_j \delta_i &\to \delta_{i-1}\sigma_j \tag{$i>j+1$} 
\end{align*}
It follows that every morphism in $\mathbf{\Delta}$ can be written uniquely in the form 
$$\delta_{i_s}\ldots\delta_{i_2}\delta_{i_1}\sigma_{j_r}\ldots\sigma_{j_2}\sigma_{j_1},$$ 
with $j_1\le j_2\le\ldots\le j_s$ and $i_1<i_2<\ldots<i_r$. This is not the same normal form as given in \cite{CWM}, but is the one we shall need in what follows. In any case, we do get the same (unique) factorization into an order-preserving surjection $\sigma$ followed by an order-preserving injection~$\delta$.

For each morphism $\xi\colon\ord m\to\ord n$ in \ord\Delta, we put
$$\xi^{\ell} = \{i\in \mathbf{m-1} : \xi (i) = \xi (i+1)\}$$
and
$$\xi^{r} = \{j \in \mathbf{n} : j \notin \mathrm{im} \xi \} \ .$$
Notice that $\xi^{\ell} =\varnothing$ means that $\xi$ is injective and $\xi^{r} =\varnothing$ means that $\xi$ is surjective. Furthermore, in the factorization $\xi=\delta\sigma$ we have $\sigma^{\ell} = \xi^{\ell}$, $\sigma^{r} = \varnothing$, $\delta^{\ell} = \varnothing$, and $\delta^{r} = \xi_r$.  

Once again, it is well known \cite{Lawvere:ordinal-sum,CWM} that $\mathbf{\Delta}$ is the free strict monoidal category containing a monoid. This means that $\mathbf{\Delta}$ contains a monoid (the ``generic monoid''), and for any strict monoidal category \C containing a monoid, there is a unique strict monoidal functor from $\mathbf{\Delta}$ to \C sending the generic monoid to the given monoid in \C. 
The unit object is $\mathbf{0}$ and tensor product is ordinal sum: $\mathbf{m}\otimes \mathbf{n} = \mathbf{m+n}$, $\xi\otimes \zeta = \xi \mathbf{+} \zeta$.
The generic monoid is $\mathbf{1}$ with as multiplication and unit the unique morphisms $\ord 2\to\ord 1$ and $\ord 0\to\ord 1$. 

Any non-empty finite ordinal \ord n has 0 as bottom element and $n-1$ as top element. We shall also use the symbols $\bot$ and $\top$ for the bottom and top elements of an ordinal. 

Let $\mathbf{\Delta}_{\bot}$ denote the subcategory of $\mathbf{\Delta}$ consisting of non-empty finite ordinals $\mathbf{n}$
and first-element-preserving functions; that is, the $\xi$ with $0\notin \xi_r$. We may obtain a presentation for \dbot from the previous presentation for \ord\Delta~ by removing the $\delta_0$ as generators, and any rewrite rules which contain them. 

\begin{remark}
Just as $\mathbf{\Delta}$ plays a fundamental role in the study of monads (and monoids), so too \dbot plays a fundamental role \cite{Lawvere:ordinal-sum} in the study of algebras for a monad. 
\end{remark}

The tensor product on 
$\mathbf{\Delta}$ restricts to $\mathbf{\Delta}_{\bot}$ 
but the unit object \ord0 of $\mathbf{\Delta}$ does not lie in \dbot; instead, we shall see that \ord1 serves as a skew unit for \dbot.
The resulting skew-monoidal category has associativity constraint an identity, while
$$\lambda_{\mathbf{n}} : \mathbf{1+n} \lra \mathbf{n}$$ is the surjection $\sigma_0$ with 
$(\lambda_{\mathbf{n}})^{\ell} = \{0\}$, and 
$$\rho_{\mathbf{n}} : \mathbf{n} \lra \mathbf{n+1}$$ is the injection $\delta_n$ with
$(\rho_{\mathbf{n}})^{r} = \{n\}$.

When we write \dbot in future, we mean it equipped with the above skew-monoidal structure. 

\begin{remark}
  This skew monoidal category \dbot is in fact the {\em initial strictly-associative skew monoidal category}, in the sense that if \C is any skew monoidal category in which the associativity maps $\alpha$ are identities, then there is a unique functor $\dbot\to\C$ which preserves all of the skew monoidal structure. 
\end{remark}

Since \dbot is a skew monoidal category, and \ord 1 is an object of \dbot, once we know that \Fsk is the free skew monoidal category on \ord 1, we shall know that there is a unique functor $\Fsk\to\dbot$ which sends the generator to $\ord 1$ and which strictly preserves the skew monoidal structure. In fact, we work in the reverse direction, first constructing a category \Fsk equipped with a faithful functor $\Fsk\to\dbot$, then showing that the skew monoidal structure lifts strictly through this functor, and finally showing that the resulting skew monoidal category \Fsk is free on \ord1.

We now recall a few further well-known facts about \ord\Delta. 
A morphism $\xi : \mathbf{m}\lra \mathbf{n}$ in \ord\Delta~ has a right adjoint $\xi^* : \mathbf{n}\lra \mathbf{m}$ as order-preserving functions if and only if $\xi$ preserves the bottom element; in other words, if and only if $\xi$ is in \dbot. The formula is
\begin{equation}\label{radj}
\xi^*(j) = \mathrm{max}\{i : \xi(i) \le j \} \ .
\end{equation}
Analogously (and dually), $\xi\colon\ord m\to \ord n$ has a right adjoint if and only if it preserves the top element, so that $\xi(m-1)=n-1$.  The formula is 
\begin{equation}\label{ladj}
\xi^!(j) = \mathrm{min}\{i : j \le \xi(i) \} \ .
\end{equation}

\begin{remark}
Of course, as with any adjunction we have $i\le \xi^*(j)$ if and only if $\xi(i)\le j$, and likewise $\xi^!(j)\le i$ if and only if $j\le \xi(i)$.  But because we are dealing here with adjunctions between totally ordered sets, there are some extra equivalences: $i<\xi(j)$ iff $\xi(j)\not\le i$ iff $j\not\le\xi^*(i)$ iff $\xi^*(i)<j$; and similarly $\xi(i)<j$ iff $j\not<\xi(i)$ iff $\xi^!(j)\not< i$ iff $i\le \xi^!(j)$. Thus when working with strict inequalities rather than inequalities the roles of left and right adjoints are in some sense reversed. 
\end{remark}

\begin{remark}
Corresponding to the unit of the adjunction, we have the inequality $i\le \xi^*\xi(i)$ for all $i$. To say that $i<\xi^*\xi(i)$ for a particular $i$ is equivalently to say that there is an $i'>i$ with $\xi(i')=\xi(i)$; clearly there will be such an $i'$ if and only if $\xi(i+1)=\xi(i)$; which is to say, in the notation introduced earlier, that $i\in\xi^\ell$. Thus $i<\xi^*\xi(i)$ if and only if $i\in\xi^\ell$. Similarly $\xi\xi^*(j)\le j$ is true for all $j$, and the inequality is strict just when $j\in\xi^r$.
\end{remark}

Let $\mathbf{\Delta}_{\top}$ denote the subcategory of $\mathbf{\Delta}$ consisting of non-empty finite ordinals $\mathbf{n}$
and last-element-preserving functions.
This becomes a right skew-monoidal category in the obvious way; in fact, in a dual way as reinforced by the next result, which appeared already in \cite{Lawvere:ordinal-sum}.

\begin{proposition}
Formula \eqref{radj} defines an isomorphism of categories
$$(\mathbf{\Delta_{\bot}})^{\mathrm{op}} \cong \mathbf{\Delta_{\top}} \ .$$ 
If $\zeta : \mathbf{m}\lra \mathbf{n}$ in $\mathbf{\Delta}_{\top}$ is
the image of $\xi : \mathbf{n}\lra \mathbf{m}$ in $\mathbf{\Delta}_{\bot}$ under the isomorphism then 
$$ \zeta^r = \xi^{\ell}  \ \text{  and  } \  \zeta^{\ell} = \{i-1 : i\in \xi^r \}  \ .$$ 
\end{proposition}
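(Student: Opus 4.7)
The plan is to verify that the formula \eqref{radj} assembles into a contravariant functor, that it is bijective on hom-sets with inverse supplied by the dual formula \eqref{ladj}, and then to read off the two set-theoretic identities from the characterizations of $\xi^\ell$ and $\xi^r$ already recorded in the remarks preceding the proposition.

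First I would check that if $\xi\colon\ord n\to\ord m$ lies in $\mathbf{\Delta}_\bot$ then $\xi^\ast$ is a well-defined morphism in $\mathbf{\Delta}_\top$. Well-definedness of the max uses that $\xi(0)=0\le j$, so the set $\{i:\xi(i)\le j\}$ is nonempty; monotonicity of $\xi^\ast$ is immediate from the definition. The top-preservation $\xi^\ast(m-1)=n-1$ holds because every $\xi(i)\le m-1$. Functoriality in the contravariant sense, i.e.\ $(\eta\xi)^\ast=\xi^\ast\eta^\ast$ and $1^\ast=1$, is a purely formal consequence of the fact that $\xi\dashv\xi^\ast$ as order-preserving functions: composites of adjoint pairs are adjoint pairs, and adjoints are unique. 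The analogous facts for $\zeta\mapsto\zeta^!$ show it lands in $\mathbf{\Delta}_\bot$ (since $\zeta$ preserves top forces $\zeta^!(0)=0$) and is a contravariant functor. That the two constructions are mutually inverse is the statement $(\xi^\ast)^!=\xi$ and $(\zeta^!)^\ast=\zeta$, which both reduce to the uniqueness of adjoints.

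For the set identities, with $\zeta=\xi^\ast$, I would argue as follows. For $\zeta^r=\xi^\ell$: a value $k\in\ord n$ fails to lie in $\mathrm{im}(\xi^\ast)$ precisely when there is no $j$ with $\xi^\ast(j)=k$; unpacking, $k=n-1$ always occurs (take $j=m-1$), while for $k<n-1$ the value $k$ is hit iff there exists $j$ with $\xi(k)\le j<\xi(k+1)$, i.e.\ iff $\xi(k)<\xi(k+1)$. Hence $k\notin\mathrm{im}(\zeta)$ iff $\xi(k)=\xi(k+1)$ iff $k\in\xi^\ell$, which is exactly $\zeta^r=\xi^\ell$. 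For $\zeta^\ell=\{i-1:i\in\xi^r\}$: the equality $\xi^\ast(j)=\xi^\ast(j+1)$ holds iff no index $i$ satisfies $\xi(i)=j+1$, equivalently $j+1\notin\mathrm{im}(\xi)$, equivalently $j+1\in\xi^r$; hence $j\in\zeta^\ell$ iff $j+1\in\xi^r$.

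I do not expect serious obstacles: every ingredient is standard Galois-connection manipulation between finite totally ordered sets, and the remark immediately preceding the proposition has already isolated the two key equivalences ($i<\xi^\ast\xi(i)\Leftrightarrow i\in\xi^\ell$ and $\xi\xi^\ast(j)<j\Leftrightarrow j\in\xi^r$). The only mildly fiddly point is keeping track of the index shift by one in the second identity, caused by the fact that $\xi^\ell$ is indexed by $\ord{n-1}$ while $\xi^r$ is indexed by $\ord m$; that bookkeeping is exactly what produces the shift $i\mapsto i-1$ in the statement.
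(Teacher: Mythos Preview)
Your proof is correct, but it follows a different path from the paper's.

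For the isomorphism, you verify directly that $\xi\mapsto\xi^*$ is well-defined, contravariantly functorial, and inverse to $\zeta\mapsto\zeta^!$, all via uniqueness of adjoints. The paper instead argues more conceptually: an order-preserving map between finite ordinals preserves the top (respectively bottom) element if and only if it preserves limits (respectively colimits), so the adjoint functor theorem immediately supplies the right adjoint to any $\xi\in\mathbf{\Delta}_\bot$, and the isomorphism follows. Your argument is more self-contained and requires no appeal to the adjoint functor theorem; the paper's is shorter but presupposes that machinery.

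For the two set identities, you compute directly with elements, unpacking when $k\notin\mathrm{im}(\xi^*)$ and when $\xi^*(j)=\xi^*(j+1)$. The paper takes a different tack: it reduces to the generating adjunctions $\delta_{i+1}\dashv\sigma_i\dashv\delta_i$ (so $\tau=\delta_{i+1}$, $\sigma=\sigma_i$, $\delta=\delta_i$), checks the identities on these generators where they are immediate from $\delta_i^r=\sigma_i^\ell=\{i\}$ and $\delta_{i+1}^r=\{i+1\}$, and then uses that every morphism factors through generators. Your elementwise computation is arguably cleaner here, as it avoids having to track how $\xi^\ell$ and $\xi^r$ behave under composition; the paper's approach exploits the presentation of $\mathbf{\Delta}$ already recalled earlier in the section.
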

\begin{proof}
A functor between finite ordinals preserves limits if and only if it preserves last element.
A functor between finite ordinals preserves colimits if and only if it preserves first element.
So the isomorphism follows from the adjoint functor theorem and the fact that the right adjoint of a functor is the left Kan extension of the identity along the functor. 
The second sentence follows using composition of adjunctions $\tau \dashv \sigma \dashv \delta$
where $\delta$ and $\tau$ are injective, $\sigma$ is surjective, $\delta^r = \sigma^{\ell} = \{i\}$ and $\tau^r = \{i+1\}$
for some natural number $i$.      
\end{proof}

The category $\mathbf{\Delta}$ is equivalent to the category of all finite totally ordered sets and order-preserving functions. It will sometimes be convenient to use totally ordered sets which are not ordinals; up to isomorphism, of course, this makes no difference.

\section{Overview of different approaches to bracketings}
\label{sect:overview}

Our goal is to discover a nice model for the free skew-monoidal category \Fsk generated by a single object $X$. 
The objects are clearly words in the letters $I$ and $X$, bracketed meaningfully pairwise, such as $(((X(IX))(XX))X)$, and so we shall need to have ways of working efficiently with such bracketings.

The bracketing given above  can be written as a triangulated polygon
\begin{equation}\label{heptagon1}
\xymatrix{
& 0 \ar[rr]{} \ar[rrrd]{} \ar[rddd]{}  \ar[ld]_{X}  & & 6  &  \\
1 \ar[rrdd]{} \ar[d]_{I} & & & & 5 \ar[lu]_{X} \\
2  \ar[rrd]_{X} & & & & 4  \ar[u]_{X} \\
& &  3 \ar[rruu]{}  \ar[rru]_{X} &
}
\end{equation}
where the edges $i\lra i+1$ are labelled by the symbols $I$ or $X$. This labelling can equivalently be specified by giving the subset $u\subseteq\ord m$ consisting of all $i\in\ord m$ for which the edge $i\to i+1$ is labelled by $X$.

In most of what follows the $I$/$X$ labelling will be straightforward to deal with; most of the interest will lie in the triangulation/bracketing. When we focus on this, the particular identity of the things being bracketed is not so important, but it is convenient to index them, as in $(((X_0(X_1X_2))(X_3X_4))X_5)$.

The triangles appearing in the triangulation can be represented as lists $(x_1x_2x_3)$, where $x_1<x_2<x_3$ are the objects; for example the large triangle in the centre would be $(035)$. For each $i\in\{1,2,3,4,5\}$, there is exactly one triangle $(x_1x_2x_3)$ with $x_2=i$. If we write $\ell(i-1)$ for $x_1$ and $r(i)+1$ for $x_3$, we obtain a pair of functions $\ell$ and $r$, and as we shall see, the whole triangulation can be recovered either from $\ell$ or from $r$. In the case of the triangulation above, for instance, we would have $\ell(2)=0$ and $r(3)=4$.

For a given bracketing of 6 terms, each of $X_0,X_1,\ldots,X_5$ is bracketed either to the left or the right. Of course $X_0$ will always be bracketed to the right and $X_5$ to the left. In our example, $X_2$ and $X_4$ are bracketed to the left, and $X_1$ and $X_3$ to the right. 

We can recognize this distinction in each of the different representations of a bracketing, as summarized in the following table, in which conditions appearing in the same column are equivalent.

\begin{figure}[h]
  \centering
\begin{tabular}[c]{|c|c|}
\hline 
  $X_i$ bracketed to left & $X_i$ bracketed to right \\
\hline
$\ell(i)<i$ & $\ell(i)=i$ \\
\hline
$r(i)=i$ & $r(i)>i$ \\
\hline
triangle $(j,i,i+1)$ & triangle $(i,i+1,k)$ \\
\hline
\end{tabular}    
\end{figure}

The other approach to bracketings that we shall use involves the orientals of \cite{orientals}, in which bracketings turn out to be certain 2-cells in an $n$-category.  We shall discuss this in Section~\ref{sect:orientals}. 

Yet another approach to bracketings, important in many contexts, but not used in this paper, involves binary trees.

\section{Bracketing functions and the Tamari lattice}
\label{sect:bracketings}

We shall now formalize the sort of ``bracketing functions'' which arise as the $\ell$ and $r$ of the previous section; these were studied previously in \cite{TamariHuang}. In this section we work abstractly with these, before turning in the next section to their connections with orientals. In fact we consider bracketing functions for an arbitrary finite non-empty totally ordered set, rather than one of the form $\{0,1,\ldots,m-1\}$.

Let $M$ be a finite non-empty totally ordered set. We write $\partial M$ for the subset consisting of the top and bottom elements $\top$ and $\bot$, and $\mathring M$ for the complement of $\partial M$ in $M$ (the interior of $M$). We shall sometimes use $i+1$ and $i-1$ to denote the successor and predecessor of an element $i$. We write $M\op$ for the set $M$ with the reverse ordering. 

A {\em left bracketing function} ({\em lbf\/}) is a function $\ell:M \lra M$ satisfying
\begin{itemize}
\item[(i)] $\ell(j) \le j$ for all $j\in M$;
\item[(ii)] $\ell(j)\le i \le j$ implies $\ell(j)\le \ell(i)$;
\item[(iii)] $\ell$ preserves the top element $\top$.
\end{itemize}
Some consequences are:
\begin{itemize}
\item[(iv)] $\ell$ preserves the bottom element $\bot$
\item[(v)] $\ell$ is idempotent.
\end{itemize}
As with any functions into a linearly ordered set, the lbf are ordered using value-wise order in $M$, so that $\ell\le \ell'$ just when $\ell(i)\le\ell'(i)$ for all $i\in M$. We write $\Tam_M$ for the resulting poset.

If $M\cong M'$ as posets, then clearly $\Tam_M\cong\Tam_{M'}$, and so in some sense little is lost if we replace $M$ by the unique ordinal which is order-isomorphic to $M$. It is convenient, however, to allow ourselves the extra flexibility of a general $M$. In the case $M=\ord m$, we write simply $\Tam_m$ for $\Tam_M$: this is the Tamari lattice \cite{TamariThesis,Tamari62, TamariHuang}. For the equivalence between left bracketing functions $\ord m\to\ord m$ and $m$-fold bracketings see \cite{Tamari62} and \cite{TamariHuang}.

A {\em right bracketing function} ({\em rbf\/}) on $M$ is an lbf on $M\op$. In terms of $M$, this is a function $r:M \lra M$ satisfying
\begin{itemize}
\item[(i)] $i \le r(i)$ for all $i\in M$;
\item[(ii)] $i\le j \le r(i)$ implies $r(j)\le r(i)$;
\item[(iii)] $r$ preserves the bottom element $\bot$.
\end{itemize}
Some consequences are:
\begin{itemize}
\item[(iv)] $r$ preserves the top element $\top$;
\item[(v)] $r$ is idempotent.
\end{itemize}

\begin{proposition}\label{bijbf}
The following equations determine a bijection between lbfs $\ell:M \lra M$ 
and rbfs $r:M \lra M$: for $i,j\in\mathring M$ 
$$r(i) = \mathrm{min} \{j : \ell(j) < i \le j \}$$
$$\ell(j) = \mathrm{max} \{i : i \le j < r(i) \} .$$
\end{proposition}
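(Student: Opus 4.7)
The plan is to verify that the two displayed formulas define mutually inverse bijections. By the duality of an rbf on $M$ being exactly an lbf on $M\op$---under which the two formulas interchange---it suffices to verify one direction. Throughout I adopt the natural conventions $\min\emptyset = \top$ and $\max\emptyset = \bot$, consistent with the forced boundary values $r(\bot)=\bot$, $r(\top)=\top$, $\ell(\bot)=\bot$, $\ell(\top)=\top$; these conventions cover the cases when the defining sets happen to be empty (e.g.\ when $\ell$ is the identity lbf, every interior defining set for $r$ is empty).

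First I would show that $r(i) := \min\{j : \ell(j) < i \le j\}$ really produces an rbf from any lbf $\ell$. Axiom (i) holds since every $j$ in the defining set satisfies $j \ge i$, and (iii) holds by convention. For (ii), assume $i \le j \le r(i)$ with $i \in \mathring M$: if $r(i)=\top$ there is nothing to show, otherwise $r(i)$ itself lies in its defining set, so $\ell(r(i)) < i \le j \le r(i)$, which exhibits $r(i)$ as an element of the set $\{k : \ell(k) < j \le k\}$ defining $r(j)$, whence $r(j) \le r(i)$.

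Next I would show that the dual formula $\ell'(j) := \max\{i : i \le j < r(i)\}$ recovers $\ell$ on $\mathring M$. For $\ell'(j) \le \ell(j)$: any $i$ with $i \le j < r(i)$ must lie in $\mathring M$ (since $i=\bot$ gives $r(i)=\bot\not>j$, while $i=\top$ contradicts $i \le j$), and $r(i) > j$ says that $k=j$ fails to be in the set $\{k:\ell(k)<i\le k\}$ defining $r(i)$; since $i \le j$ already holds, this forces $\ell(j) \ge i$, and so $i \le \ell(j)$. For $\ell(j) \le \ell'(j)$ with $\ell(j) \in \mathring M$: I claim $\ell(j)$ itself belongs to the defining set of $\ell'(j)$; the only nontrivial condition is $j < r(\ell(j))$, i.e.\ no $k \le j$ satisfies $\ell(k) < \ell(j) \le k$. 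Were such a $k$ to exist, $\ell(j) \le k \le j$ together with axiom (ii) of $\ell$ would give $\ell(j) \le \ell(k)$, contradicting $\ell(k) < \ell(j)$. The remaining case $\ell(j)=\bot$ is handled by the first half: any candidate $i$ then satisfies $i \le \ell(j)=\bot$, forcing $i=\bot$, but $r(\bot)=\bot \not> j$, so the defining set is empty and $\ell'(j)=\bot=\ell(j)$ by convention.

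The only real subtlety is the bookkeeping of these degenerate empty-set cases; the combinatorial heart of the argument is axiom (ii) of lbf, which both rules out the hypothetical bad $k$ in the inversion argument and drives the verification that $r$ is an rbf. The reverse composition follows automatically by the $M \leftrightarrow M\op$ duality.
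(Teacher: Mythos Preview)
Your proof is correct and follows essentially the same route as the paper's: define $r$ from $\ell$, verify it is an rbf, show the dual formula recovers $\ell$, and invoke the $M\leftrightarrow M\op$ duality for the other composite. The main differences are cosmetic: your verification of rbf axiom (ii) is slightly more direct (you exhibit $r(i)$ as an element of the defining set for $r(j)$, whereas the paper argues by contradiction), and you are more explicit about the $\min\emptyset=\top$, $\max\emptyset=\bot$ conventions and the boundary cases, which the paper leaves largely implicit.
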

\begin{proof} 
Given an lbf $\ell$, let $r$ be defined as in the proposition 
together with the requirement that it should preserve top and bottom elements.
We need to see first that $r$ is an rbf.
As every $j$ with $\ell(j) < i \le j$ has $i \le j$, we obtain $i \le r(i)$.  
To prove property (ii) for $r$, suppose $i\le j\le r(i)$.
Then $\ell(k)<i \le k$ implies $j\le k$. If $r(i) < r(j)$ then there exists
$k$ with $\ell(k)< i\le k$ but not $\ell(k)< j\le k$. Since we know $j\le k$,
we must have $j \le \ell(k)$. Then we have the contradiction
$i\le j\le \ell(k) < i$. So $r(j) \le r(i)$ as required.

Now we need to show that we can recover $\ell$ from the second formula in the
proposition. We need to see that $\ell(j)$ is the biggest $i$ such that $i\le j$ and, if
$\ell(k) < i \le k$, then $j<k$. To see that $i = \ell(j)$ does have the property notice that
$\ell(j) \le j$ by (i) for $\ell$, and, by (ii) for $\ell$, $\ell(k) < \ell(j) \le k$ and 
$k\le j$ would imply $\ell(j)\le \ell(k)$, a contradiction. 
To see that $\ell(j)$ is the biggest such $i$, suppose we had
$\ell(j) < i$ with $i\le j$ and $\ell(k) < i \le k$ implying $j<k$. Then $j$ is such a $k$.
So $j<j$, a contradiction.

This proves half of the proposition. The other half follows by duality (replace $M$ by $M\op$).
\end{proof}

When $\ell$ and $r$ correspond under the bijection of Proposition~\ref{bijbf}
we put $\ora{\ell} = r$ and $\ell=\ola{r}$.

We emphasize that $\ell(\bot)=\bot=r(\bot)$, $\ell(\top)=\top=r(\top)$ and, for $i,j\in\mathring M$,
we have $\ell(j)<\top$ and $r(i)>\bot$. 
Yet we can have $\ell(j)=\bot$ 
(when $\{i : i \le j < r(i) \}=\varnothing$) or $r(i)=\top$ 
(when $\{j : \ell(j) < i \le j \}=\varnothing$).

\begin{proposition}\label{ordbf}
The bijection of Proposition~\ref{bijbf} is order preserving.
\end{proposition}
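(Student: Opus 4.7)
The plan is to prove the slightly stronger statement that $\ell \le \ell'$ if and only if $\ora{\ell} \le \ora{\ell'}$, which makes the bijection an order-isomorphism, and in particular order-preserving. First I would dispose of the boundary: every lbf and rbf fixes $\bot$ and $\top$, so both inequalities are automatic there, and the work is entirely in the interior $\mathring M$. Everything then reduces to monotonicity of $\min$ and $\max$ under set inclusion of the defining sets in Proposition~\ref{bijbf}.

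For the forward direction, suppose $\ell \le \ell'$ pointwise and fix $i \in \mathring M$. I would compare the sets
$$A_\ell = \{j : \ell(j) < i \le j\}, \qquad A_{\ell'} = \{j : \ell'(j) < i \le j\}$$
used to define $\ora\ell(i)$ and $\ora{\ell'}(i)$. If $j \in A_{\ell'}$, then $\ell(j) \le \ell'(j) < i \le j$, so $j \in A_\ell$. Hence $A_{\ell'} \subseteq A_\ell$, and taking minima gives $\ora\ell(i) = \min A_\ell \le \min A_{\ell'} = \ora{\ell'}(i)$.

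For the converse direction, I would run the exactly dual argument using the second formula of Proposition~\ref{bijbf}: if $r \le r'$ and $j \in \mathring M$, then $\{i : i \le j < r(i)\} \subseteq \{i : i \le j < r'(i)\}$, since $j < r(i) \le r'(i)$ whenever $i$ is in the left-hand set, so taking maxima yields $\ola r(j) \le \ola{r'}(j)$. I don't foresee any genuine obstacle---the whole argument is a short set-theoretic manipulation of the explicit formulas from Proposition~\ref{bijbf}---and the only point requiring a moment's care is verifying that the relevant sets always have a minimum/maximum (equivalently, are nonempty), which holds because the boundary-preservation properties of $\ell$ and $r$ put $\top$ and $\bot$ into these sets when $i, j \in \mathring M$.
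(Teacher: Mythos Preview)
Your forward-direction argument is exactly the paper's proof: show $A_{\ell'}\subseteq A_\ell$ and compare minima. (The paper proves only that direction, in one line; your converse via the dual formula is a fine bonus.)

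One slip in your final parenthetical remark: the claim that $\top$ lies in $A_\ell=\{j:\ell(j)<i\le j\}$ is false, since $\ell(\top)=\top$ forces $\ell(\top)<i$ to fail. The paper explicitly notes, right after Proposition~\ref{bijbf}, that these sets \emph{can} be empty, with the convention $\ora\ell(i)=\top$ in that case. This does not damage your argument, though: if $A_{\ell'}=\varnothing$ then $\ora{\ell'}(i)=\top\ge\ora\ell(i)$ trivially, and if $A_{\ell'}\neq\varnothing$ then $A_\ell\supseteq A_{\ell'}$ is also nonempty and your $\min$ comparison goes through. So the core reasoning is correct and matches the paper; only the side remark about nonemptiness needs to be replaced by this case split.
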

\begin{proof}
Assume $\ell \le \ell_1$. Then $\{ j : \ell_1 (j)< i\le j \} \subseteq \{j: \ell (j)< i \le j\} $.
So $\ora{\ell} \le \ora{\ell_1}$.
\end{proof}

Consequently, we may consider the elements of 
$\mathrm{Tam}_m$ to be rbfs rather than lbfs.

\begin{proposition}\label{fixbf}
Suppose $\ell$ and $r$ correspond under the bijection of Proposition~\ref{bijbf}.
\begin{enumerate}[(i)]
\item For $i\in\mathring M$, we have $\ell(i)=i$ if and only if $r(i) \neq i$.
\item For $i,j\in\mathring M$, if $\ell (j) = i$ then $r(i) \neq j$.
\item If $i\neq\bot$ and $r(i)\neq\top$ then $\ell r(i) \le \ell(i-1)$.
\item If $j\neq\top$ and $\ell(j)\neq\bot$ then $r(j+1) \le r\ell(j)$. 
\item[(v)] If $i\neq\bot$, $\ell(i-1)\neq\bot$, and $r(i)\neq\top$, then $\ell r(i) \neq \ell(i-1)$
implies $r(i) = r\ell(i-1)$.  
\end{enumerate}
\end{proposition}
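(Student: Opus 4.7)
My plan is to handle each of (i)--(v) by unwinding the explicit min/max formulas for $r$ and $\ell$ given by Proposition~\ref{bijbf}, using only the defining axioms (i)--(iii) of an lbf/rbf together with the equivalences from Proposition~\ref{bijbf} itself. Throughout, the main tool will be the monotonicity-like axiom (ii) for $\ell$ (``$\ell(j)\le i\le j$ implies $\ell(j)\le\ell(i)$'') and its rbf analogue.

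For (i), I would observe that $\ell(i)=i$ holds iff $\ell(i)\not<i$ iff $i\notin\{j:\ell(j)<i\le j\}$; since $r(i)$ is the minimum of this set and $i$ is its smallest possible candidate (from $i\le j$), this is equivalent to $r(i)\ne i$. For (ii), if $\ell(j)=i$ then $\ell(j)\not<i$, so $j$ is not in the set defining $r(i)$, hence $r(i)\ne j$. For (iii), take $j:=r(i)$ in the definition of $r$: by construction $\ell(r(i))<i\le r(i)$, so $\ell(r(i))\le i-1\le r(i)$, and axiom (ii) for $\ell$ applied with indices $i-1$ and $r(i)$ yields $\ell(r(i))\le\ell(i-1)$. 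Part (iv) is the dual statement about rbfs on $M\op$, hence follows immediately from (iii) using the second sentence of Proposition~\ref{bijbf}.

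The real work is in (v). By (iii) we already have $\ell r(i)\le\ell(i-1)$, so the hypothesis $\ell r(i)\ne\ell(i-1)$ upgrades this to the strict inequality $\ell r(i)<\ell(i-1)$. I would then compute $r(\ell(i-1))=\min\{j:\ell(j)<\ell(i-1)\le j\}$ directly and show both inequalities with $r(i)$. For $r(\ell(i-1))\le r(i)$: $j=r(i)$ lies in that set, since $\ell(r(i))<\ell(i-1)$ by the strictness just noted, and $\ell(i-1)\le i-1<i\le r(i)$. For the reverse inequality $r(i)\le r(\ell(i-1))$, I need every $j$ in that set to satisfy $j\ge r(i)$; the only nontrivial point is $i\le j$, and here is where axiom (ii) of $\ell$ is decisive: if instead $\ell(i-1)\le j\le i-1$, then axiom (ii) would give $\ell(i-1)\le\ell(j)$, contradicting $\ell(j)<\ell(i-1)$. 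Hence $j\ge i$, so $\ell(j)<\ell(i-1)\le i-1<i\le j$ puts $j$ into the set defining $r(i)$, forcing $r(i)\le j$.

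I expect the main obstacle to be keeping the chain of strict vs.~non-strict inequalities straight in (v), in particular the step where axiom (ii) is leveraged in its contrapositive form to exclude candidates $j<i$; the hypotheses $\ell(i-1)\ne\bot$ and $r(i)\ne\top$ are present precisely to keep the relevant formulas well-defined and the arguments inside $\mathring M$, but they should not require further intervention beyond noting this. The overall flavour of the argument is that all five parts are essentially pigeonhole/monotonicity manipulations on the formulas of Proposition~\ref{bijbf}, so no induction or combinatorial construction is required.
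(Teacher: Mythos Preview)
Your proposal is correct and follows essentially the same approach as the paper. The only minor difference is in part (v): for the reverse inequality $r(i)\le r\ell(i-1)$ the paper simply invokes part (iv) with $j=i-1$, whereas you unroll that argument directly via the contrapositive of lbf axiom (ii); these are the same content.
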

\begin{proof}
For (i) we have $\ell(i)\neq i$ iff $\ell(i)<i$ iff $\ell(i)<i\le i$ iff $\ora{\ell}(i)\le i$
iff $\ora{\ell}(i) = i$.   

For (ii), if $i=\ell(j)$ then $j$ cannot be a $k$ with $\ell(k)<i \le k$ (let alone the first such).
So $\ora{\ell}(i) \neq j$. 

For (iii), since $r(i)<\top$, $r(i)$ is a $k$ satisfying $\ell(k) < i\le k$, and so $\ell r(i)<i$ and $\ell r(i)\le i-1$. But, by property (ii) for a lbf, $\ell r(i)\le i-1< i \le r(i)$ implies $\ell r(i)\le \ell (i-1)$. 

Then (iv) follows from (iii) by duality. 

Now to (v). Assume $i\neq\bot$, $\ell(i-1)\neq\bot$, $r(i)\neq\top$ and $\ell r(i) \neq \ell(i-1)$.
By (ii), $\ell r(i) < \ell(i-1)$. Now $r\ell(i-1)$ is the minimum $j$ with 
$\ell(j)<\ell(i-1)\le j$. Yet $\ell r(i) < \ell(i-1) \le r(i)$. So $r\ell(i-1)\le r(i)$.
But, by (iv), $r(i)\le r\ell(i-1)$. So $r(i)= r\ell(i-1)$. 
\end{proof}

\begin{proposition}[Huang-Tamari]
  If $(\ell_\gamma)_{\gamma\in\Gamma}$ is a family 
of lbf's in $\Tam_m$, then their pointwise join is again 
an lbf, and is therefore the join in $\Tam_m$.
\end{proposition}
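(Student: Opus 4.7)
Define $\ell : M \to M$ by $\ell(j) = \max_{\gamma\in\Gamma} \ell_\gamma(j)$; since $M$ is finite, this maximum is attained (by some $\gamma$ depending on $j$). I need to verify the three defining conditions for an lbf.

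Conditions (i) and (iii) are immediate from the pointwise definition: each $\ell_\gamma(j) \le j$, hence $\ell(j) \le j$; and each $\ell_\gamma(\top) = \top$, hence $\ell(\top) = \top$. The content of the proposition lies entirely in verifying condition (ii), which is not a pointwise condition and so does not obviously pass through a pointwise join.

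The key trick for (ii) is to use a single $\gamma$ that witnesses the maximum at the index $j$ where condition (ii) is being tested. Specifically, suppose $\ell(j) \le i \le j$; I wish to show $\ell(j) \le \ell(i)$. Choose $\gamma_0 \in \Gamma$ with $\ell(j) = \ell_{\gamma_0}(j)$. Then $\ell_{\gamma_0}(j) \le i \le j$, so condition (ii) applied to the lbf $\ell_{\gamma_0}$ gives $\ell_{\gamma_0}(j) \le \ell_{\gamma_0}(i)$. Combining with $\ell_{\gamma_0}(i) \le \ell(i)$, we conclude $\ell(j) = \ell_{\gamma_0}(j) \le \ell_{\gamma_0}(i) \le \ell(i)$, as required.

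Having verified that $\ell$ is an lbf, it is the join in $\Tam_m$ essentially by construction: it dominates each $\ell_\gamma$ pointwise (hence in the value-wise order of $\Tam_m$), and any lbf $\ell'$ with $\ell_\gamma \le \ell'$ for all $\gamma$ satisfies $\ell_\gamma(j) \le \ell'(j)$ at every $j$, so the pointwise maximum $\ell(j)$ is also $\le \ell'(j)$. The main obstacle was condition (ii), and it was overcome by the observation that finiteness of $M$ lets us select a single $\gamma_0$ realising the pointwise max at $j$, thereby reducing the quasi-monotonicity condition for $\ell$ to the same condition for one of the $\ell_\gamma$.
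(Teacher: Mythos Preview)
Your proof is correct and follows essentially the same approach as the paper's. The only minor tactical difference is in verifying condition~(ii): you pick a single witness $\gamma_0$ realising the maximum at $j$, whereas the paper observes that $\ell_\gamma(j)\le\max_\gamma\ell_\gamma(j)\le i\le j$ holds for \emph{every} $\gamma$, applies condition~(ii) for each $\ell_\gamma$, and then takes the maximum over $\gamma$; both arguments are equally straightforward.
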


\begin{proof}
The pointwise join $\ell$ is given by 
$\ell(j)=\max_{\gamma\in\Gamma} \ell_\gamma(j)$.
Since each $\ell_\gamma(j)\le j$, also 
$\max_{\gamma\in\Gamma}\ell_\gamma(j)\le j$.
Since each $\ell_\gamma(m-1)=m-1$ also 
$\max_{\gamma\in\Gamma}\ell_\gamma(m-1)=m-1$.
Finally, suppose that 
$\max_{\gamma\in\Gamma}\ell_\gamma(j)\le i\le j$.
Then for each $\gamma$ we have 
$\ell_\gamma(j)\le i\le j$, and so since $\ell_\gamma$
is an lbf we have
$\ell_\gamma(j)\le\ell_\gamma(i)$ (once again for all
$\gamma$). It then follows that 
$\max_{\gamma\in\Gamma}\ell_\gamma(j)\le\max_{\gamma\in\Gamma}\ell_\gamma(i)$.
\end{proof}

\begin{remark}
Since $\Tam_m$ is a finite poset with finite joins, it follows that $\Tam_m$ has finite meets, which gives Tamari's result that $\Tam_m$ is a lattice.  Notice, however, that the pointwise meet of a family of lbf morphisms need not be an lbf morphism, so meets are not constructed pointwise in general. But the dual result to the proposition shows that meets of rbf's are rbf's, and so we obtain the following description of meets in $\Tam_m$: given a family of elements $\ell_\gamma$ of $\Tam_m$, first form the corresponding rbf's $r_\gamma$, then their (pointwise) meet $r$, and then the corresponding lbf $\ell$.
\end{remark}

\begin{proposition}\label{prop:cob-for-lbf}
Suppose $\xi : M \lra N$ preserves order and top and bottom element
so that there are adjoints $\xi^! \dashv \xi \dashv \xi^*$.
If $\ell\colon M \lra M$ is an lbf then so is $\xi \ell \xi^*:N \lra N$. 
If $r:M \lra M$ is an rbf then so is $\xi r \xi^!:N \lra N$.
\end{proposition}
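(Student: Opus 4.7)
The plan is to verify the three defining properties of an lbf for $\xi\ell\xi^*$ using the adjunctions $\xi^!\dashv\xi\dashv\xi^*$, and then to obtain the rbf half by passing to opposites.

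First I would dispense with property (iii): since $\xi^*$ is a right adjoint it preserves limits, hence the top element; $\ell$ preserves top by hypothesis; and $\xi$ preserves top by our hypothesis on $\xi$. So the composite $\xi\ell\xi^*$ sends $\top_N$ to $\top_N$. For property (i), starting from $\ell\xi^*(j)\le\xi^*(j)$ (property (i) for $\ell$) and applying $\xi$ gives $\xi\ell\xi^*(j)\le\xi\xi^*(j)\le j$, the last inequality being the counit of $\xi\dashv\xi^*$.

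The substantive step is property (ii). Suppose $\xi\ell\xi^*(j)\le i\le j$. Using the adjunction $\xi\dashv\xi^*$, the inequality $\xi\ell\xi^*(j)\le i$ transposes to $\ell\xi^*(j)\le\xi^*(i)$, and monotonicity of $\xi^*$ gives $\xi^*(i)\le\xi^*(j)$. Thus $\ell\xi^*(j)\le\xi^*(i)\le\xi^*(j)$, which by property (ii) for $\ell$ yields $\ell\xi^*(j)\le\ell\xi^*(i)$. Applying $\xi$ gives $\xi\ell\xi^*(j)\le\xi\ell\xi^*(i)$, as required.

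For the rbf statement, the cleanest route is by duality: $r$ is an rbf on $M$ iff it is an lbf on $M\op$, and the map $\xi:M\op\to N\op$ still preserves order, top, and bottom (with $\top$ and $\bot$ swapped), while the adjunctions become $\xi^*\dashv\xi\dashv\xi^!$ in the opposite orders. Applying the lbf case to this dual situation shows that $\xi r\xi^!$ is an lbf on $N\op$, i.e.\ an rbf on $N$. The only potentially awkward point is keeping straight which adjoint plays which role under the order reversal, but a one-line bookkeeping of $\dashv$ settles it; no separate calculation is needed.
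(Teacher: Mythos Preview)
Your proof is correct and follows essentially the same route as the paper: verify (iii), (i), (ii) for $\xi\ell\xi^*$ using the counit inequality $\xi\xi^*\le 1$ and the adjunction transpose, then obtain the rbf statement by duality. Your write-up is slightly more explicit about why $\xi^*$ preserves $\top$ and how the adjunction is used in step (ii), but the argument is identical.
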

\begin{proof} 
Clearly $\ell_1=\xi \ell \xi^*$ preserves top element since all three factors do. Also $\ell_1(j) = \xi \ell \xi^*(j) \le \xi \xi^* (j) \le j$ since (i) holds for $\ell$ and $\xi$ preserves order. 
It remains to prove (ii) for $\ell_1$.
Now $\ell_1(j) \le i \le j$ implies $\ell\xi^*(j) \le \xi^*(i) \le \xi^*(j)$ which implies, using (ii) for $\ell$, that $\ell\xi^*(j) \le \ell\xi^*(i)$. 
However $\xi$ is order preserving, so $\ell_1(j)\le \ell_1(i)$.
The result for $r$ follows by duality.
\end{proof}

In particular, if $\sigma\colon M\to N$ is an order-preserving surjection then it has both adjoints, and so both these constructions are possible.

\begin{proposition}\label{prop:cob-lr}
  If $\sigma\colon M\to N$ is an order-preserving surjection between totally ordered sets, and $\ell\colon M\to M$ is an lbf, then we have $\sigma\ora{\ell}\sigma^!\le\ora{\sigma\ell\sigma^*}$. Furthermore, if $i<\sigma^*\sigma(i)$ implies $\sigma\ell(i)=\sigma(i)$, then we have an equality $\sigma\ora{\ell}\sigma^!=\ora{\sigma\ell\sigma^*}$.
\end{proposition}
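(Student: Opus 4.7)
The plan is to use the characterization $\ora{\ell_1}(j)=\min\{k\in N:\ell_1(k)<j\le k\}$ from Proposition~\ref{bijbf}, together with the strict-inequality adjunction equivalence recalled in the remark after~\eqref{ladj}, namely $\sigma(a)<b\iff a<\sigma^!(b)$. Since $\sigma$ is a surjective order-preserving map between finite non-empty totally ordered sets, it preserves $\top$ and $\bot$, so both adjoints $\sigma^!\dashv\sigma\dashv\sigma^*$ exist and satisfy $\sigma\sigma^*=1_N=\sigma\sigma^!$. By Proposition~\ref{prop:cob-for-lbf}, $\ell_1:=\sigma\ell\sigma^*$ is an lbf, so $\ora{\ell_1}$ makes sense. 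Both $\sigma\ora\ell\sigma^!$ and $\ora{\ell_1}$ preserve $\bot$ and $\top$, so they agree on $\partial N$; I therefore fix $j\in\mathring N$ and set $i:=\sigma^!(j)$, which lies in $\mathring M$ because $j$ is neither $\bot$ nor $\top$ (the argument uses surjectivity: otherwise $\sigma$ would skip the value $j$ near the top of $M$).

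For the inequality $\sigma\ora\ell\sigma^!(j)\le\ora{\ell_1}(j)$, put $k:=\ora{\ell_1}(j)$ and propose $m:=\sigma^*(k)$ as a candidate realizing $\ora\ell(i)$. Surjectivity gives $\sigma(m)=k$, so $j\le k$ yields $i=\sigma^!(j)\le m$; and the defining strict inequality $\ell_1(k)<j$, that is $\sigma\ell\sigma^*(k)<j$, translates via the adjunction equivalence into $\ell(m)<\sigma^!(j)=i$. Thus $m$ is a valid candidate, giving $\ora\ell(i)\le m$ and hence $\sigma\ora\ell(i)\le\sigma(m)=k$, as required.

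For the converse under the extra hypothesis, let $n:=\ora\ell(i)$ and show that $\sigma(n)$ is a valid candidate for $\ora{\ell_1}(j)$. From $n\ge i=\sigma^!(j)$ we get $j\le\sigma(n)$. To check $\ell_1(\sigma(n))=\sigma\ell\sigma^*\sigma(n)<j$, equivalently $\ell\sigma^*\sigma(n)<\sigma^!(j)$, I split on whether $n=\sigma^*\sigma(n)$. In the first case the required inequality is just $\ell(n)<i$, which is the definition of $n$. In the second case $n<\sigma^*\sigma(n)$, so the hypothesis forces $\sigma\ell(n)=\sigma(n)$; but $\ell(n)<\sigma^!(j)$ also gives $\sigma\ell(n)<j$ by the adjunction equivalence, yielding $\sigma(n)<j$, which contradicts $j\le\sigma(n)$. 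Hence only the first case occurs, $\sigma(n)$ is a valid candidate, and $\sigma\ora\ell\sigma^!(j)=\sigma(n)\ge\ora{\ell_1}(j)$. The one mildly subtle step is precisely this case-analysis: rather than compute $\ell\sigma^*\sigma(n)$ directly (which would require comparing $\ell(n)$ with $\ell(\sigma^*\sigma(n))$), one uses the hypothesis to rule out the awkward case by contradiction.
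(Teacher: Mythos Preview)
Your proof is correct and follows essentially the same route as the paper's: both arguments pick the minimizing element on one side, transport it across via $\sigma^*$ (respectively $\sigma$) using the strict-inequality adjunction $\sigma(a)<b\iff a<\sigma^!(b)$, and for the reverse inequality both use the same case split on whether $n=\sigma^*\sigma(n)$, deriving a contradiction from the hypothesis in the case $n<\sigma^*\sigma(n)$. The only minor omission is that when $\ora{\ell_1}(j)=\top$ (i.e.\ the defining set is empty) your ``defining strict inequality $\ell_1(k)<j$'' fails, but the desired bound is then trivial; the paper handles this edge case explicitly, and you should too.
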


\begin{proof}
Write $r$ for $\ora{\ell}$.
First we prove the inequality $\sigma r\sigma^!\le\ora{\sigma\ell\sigma^*}$.

We must show that $\sigma r\sigma^!(k)\le\ora{\sigma\ell\sigma^*}(k)$ for all $k$. If there is no $h$ with $\sigma\ell\sigma^*(h)<k\le h$, then $\ora{\sigma\ell\sigma^*}(k)$ will be the top element and there is nothing to prove. Suppose then that there is such an $h$. By adjointness we have $\ell\sigma^*(h)<\sigma^!(k)$; also $\sigma^!(k)\le\sigma^*(k)\le\sigma^*(h)$, giving $\ell\sigma^*(h)<\sigma^!(k)\le\sigma^*(h)$, and so $r\sigma^!(k)\le \sigma^*(h)$ by definition of $r=\ora{\ell}$. Using adjointness again gives $\sigma r\sigma^!(k)\le h$. But this will be true for all such $h$, and so in particular for the least such, namely $h=\ora{\sigma\ell\sigma^*}(k)$. This gives  $\sigma r\sigma^!(k)\le \ora{\sigma\ell\sigma^*}(k)$.


Now consider the reverse inequality $\ora{\sigma\ell\sigma^*}(k)\le\sigma r\sigma^!(k)$. Once again, if there is no $h$ with $\ell(h)<\sigma^!(k)\le h$ then there is nothing to prove, so suppose that there is such an $h$, and consider the least such $h$, namely $r\sigma^!(k)$.
By adjointness we have $\sigma\ell(h)<k\le\sigma(h)$.

If $h=\sigma^*\sigma(h)$ then we have $\sigma\ell\sigma^*\sigma(h)<k\le\sigma(h)$, from which it follows that $\ora{\sigma\ell\sigma^*}(k)\le\sigma(h)$; but this is the required inequality $\ora{\sigma\ell\sigma^*}(k)\le\sigma r\sigma^!(k)$.

Suppose finally that $h<\sigma^*\sigma(h)$. By the additional hypothesis in the last sentence of the proposition we then have $\sigma\ell(h)=\sigma(h)$, and so $\sigma(h)<k\le\sigma(h)$, which is a contradiction. 
\end{proof}

Before leaving this section, we record one more result about bracketings which will be needed later. Recall from Section~\ref{sect:overview} the distinction, for a given bracketing $S\in\Tam_m$ and given $i\in\ord m$, between whether $X_i$ is bracketed to the left or to the right; equivalently whether $\ell(i)<i$ or $\ell(i)=i$.

\begin{proposition}\label{prop:wedge-ells}
  The set $\Tam^i_m$ of all $\ell\in\Tam_m$ for which $\ell(i)<i$ is down-closed, in the sense that if $\ell\in\Tam^i_m$ and $\ell'\le \ell$ then $\ell'\in\Tam_m$. Furthermore, $\Tam^i_m$ is closed in $\Tam_m$  under finite joins.
Dually,  the set $\Tam^{(i)}_m$ of all $\ell\in\Tam_m$ for which $\ell(i)=i$ is up-closed, in the sense that if $\ell\in\Tam^{(i)}_m$ and $\ell\le \ell'$ then $\ell'\in\Tam^{(i)}_m$. Furthermore, $\Tam^{(i)}_m$ is closed in $\Tam_m$  under finite meets.
\end{proposition}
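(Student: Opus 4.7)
The plan is to verify each of the four assertions in turn, with the first three being essentially formal and the fourth being the substantive one requiring the meet-via-rbf construction.

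First I would dispatch the two closure-under-inequality statements. For down-closedness of $\Tam^i_m$: if $\ell\in\Tam^i_m$ and $\ell'\le\ell$, then by the pointwise order $\ell'(i)\le\ell(i)<i$, so $\ell'\in\Tam^i_m$. Dually, for up-closedness of $\Tam^{(i)}_m$: if $\ell\in\Tam^{(i)}_m$ and $\ell\le\ell'$, then $i=\ell(i)\le\ell'(i)\le i$ (the last inequality by property~(i) of an lbf), forcing $\ell'(i)=i$.

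Next I would handle closure under finite joins of $\Tam^i_m$. By the Huang--Tamari proposition the join in $\Tam_m$ of a finite family $(\ell_k)$ is computed pointwise, so the join $\ell$ satisfies $\ell(i)=\max_k \ell_k(i)$. If each $\ell_k(i)<i$, then since the maximum of finitely many values each strictly less than $i$ is again strictly less than $i$, we have $\ell(i)<i$, so $\ell\in\Tam^i_m$.

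The main obstacle is closure of $\Tam^{(i)}_m$ under finite meets, because meets of lbfs are \emph{not} computed pointwise. Here the key move is to transport the problem to rbfs via Proposition~\ref{bijbf}, using the fact (Proposition~\ref{ordbf}) that the bijection is an order-isomorphism, so meets correspond to meets. By Proposition~\ref{fixbf}(i), for $i\in\mathring M$ the condition $\ell(i)=i$ is equivalent to $r(i)\neq i$, which in turn (since $i\le r(i)$ always) is equivalent to $r(i)>i$. Now by the dual of the Huang--Tamari proposition (noted in the remark following it), the pointwise meet of a finite family of rbfs is an rbf, and hence is the meet in $\Tam_m$. Given $\ell_1,\dots,\ell_n\in\Tam^{(i)}_m$ with corresponding rbfs $r_1,\dots,r_n$, each $r_k(i)>i$, so their pointwise meet $r(i)=\min_k r_k(i)>i$ (a finite minimum of values strictly greater than $i$). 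Translating back via the bijection gives an lbf $\ell$ with $\ell(i)=i$, which is the meet in $\Tam_m$. For the (trivial) cases $i\in\partial M$, both $\Tam^i_m$ and its complement behave degenerately, and I would dispose of these in a brief opening remark.
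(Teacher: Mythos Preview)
Your proof is correct and follows essentially the same approach as the paper: pointwise joins for lbfs handle $\Tam^i_m$, and the passage to rbfs (via Proposition~\ref{fixbf}(i) and the dual Huang--Tamari statement) handles meets in $\Tam^{(i)}_m$. You give a bit more detail than the paper---explicitly verifying up-closedness of $\Tam^{(i)}_m$ rather than deducing it by complementation, and flagging the boundary cases---but the substance is the same.
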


\begin{proof}
If $\ell'\le \ell$ and $\ell(i)<i$ then $\ell'(i)\le\ell(i)<i$.
If $(\ell_\gamma)_{\gamma\in\Gamma}$ is a finite family of elements of $\Tam^i_m$, then their join $\ell$ in $\Tam^m$ is given by $\ell(j)=\max_{\gamma\in\Gamma}\ell_\gamma(j)$. Since $\ell_\gamma(i)<i$ for all $\gamma\in\Gamma$, also $\max_{\gamma\in\Gamma}\ell_\gamma(i)<i$, and so $\vee_{\gamma\in\Gamma}\ell_\gamma\in\Tam^i_m$.

For the dual case, use the fact that $\ell(i)=i$ if and only if $i<r(i)$, and that meets can be constructed pointwise using rbf's.
\end{proof}

\section{Bracketings, triangulations, and orientals}
\label{sect:orientals}

In this section we make precise the connection between bracketing functions, triangulations, and orientals.

We shall use the convention that each subset $x=\{x_0,x_1, \dots ,x_r\}$ of $\mathbf{m}\in \mathbf{\Delta}$ organizes itself as a list 
$x=(x_0x_1\dots x_r)$ with $x_0<x_1< \dots <x_r$. 
We may omit the curly brackets around some sets of such subsets and may also omit the commas between elements. 
For $0\le p \le r$, the {\em $p$-th face} $x\partial_p$ of 
$x$ is obtained by deleting $x_p$ from $x$.
The face is called {\em odd} or {\em even} according as $p$ is odd or even.
We write $x^-$ for the set of odd faces of $x$ and we write $x^+$ for the set of even faces. 
Because we start with an even $0$, the number of even faces is either equal to, or one greater than, the number of odd faces, depending on whether $m$ is odd or even.

In particular, a {\em triangle} of \ord m is a set $x=(x_0x_1x_2)$ with {\em vertices}  $x_0<x_1<x_2$. We call $(x_0x_1)$ the {\em left leg}, $(x_1x_2)$ the {\em right leg}, and $(x_0x_2)$ the {\em long leg} of the triangle. Then $x^-$ consists of the long leg of $x$, while $x^+$ consists of the other two legs, which unsurprisingly are called the {\em short legs}. We also call $x_1$ the {\em middle vertex}.

If $H$ is a set of subsets of $\ord m$ of cardinality $r+1$, we define
\begin{eqnarray}
H^- = \{x^- : x\in H\} \ \text{  and  } \ H^+ = \{x^+ : x\in H\} \ . 
\end{eqnarray}
Also, we define
\begin{eqnarray}
H^{\mp} = H^- \smallsetminus H^+ \ \text{  and  } \ H^{\pm} = H^+ \smallsetminus H^- \ . 
\end{eqnarray}

For each $\xi : \mathbf{m} \lra \mathbf{n}$ in $\mathbf{\Delta}$, we write
$\xi x$ for $\{\xi(x_0)\xi(x_1) \dots \xi(x_r)\} \subseteq \mathbf{n}$. 
So for each $\xi : \mathbf{m} \lra \mathbf{n}$ in $\mathbf{\Delta}_{\bot}$
and $y=(y_0y_1\dots y_s) \subseteq \mathbf{n}$, we have
$$\xi^* y = \{\xi^*(y_0)\xi^*(y_1) \dots \xi^*(y_s)\} \subseteq \mathbf{m} \ .$$ 

Now we recall the {\em oriental} $\O_m$ \cite{orientals}. This is the free $m$-category on the $m$-simplex. However, we need no more than the underlying 4-category structure. The objects (= 0-cells) of $\O_m$ are the natural numbers $p$ with $0 \le p \le m$. A morphism (= 1-cell) $a : p \lra q$ can exist only if $p\le q$; then $a=(a_0 a_1\dots a_r)$ is a subset of $\mathbf{q}\smallsetminus \mathbf{p}$
with $a_0 =p$; we think of $a$ as the path 
$$p\stackrel{(pa_1)}\lra a_1\stackrel{(a_1a_2)}\lra a_2\lra \dots \stackrel{(a_rq)}\lra q \ .$$
According to \cite{orientals} we think of the morphism $a : p \lra q$ as a set of doublets 
$$a=(pa_1)(a_1a_2)\dots (a_rq) \ .$$  
We shall only need to use the morphism $0\stackrel{(0m)}\lra m$, which we call $b_m$, and the morphism $0\stackrel{(01)}\lra 1 \stackrel{(12)}\lra 2 \dots  \stackrel{(m-1,m)}\lra m$, which we call $e_m$.


Let $\S_m = \O_m (0,m)(b_m,e_m)$ as a 2-category. We shall examine this more explicitly. 

The objects of $\S_m$ are 2-cells $S:b_m \Lra e_m$ in $\O_m$.
A description of these, adapted from \cite{orientals}, is as follows: 
$S$ is a set of triangles $x=(x_0x_1x_2)$ of $\mathbf{m+1}$ 
satisfying the conditions:
\begin{itemize}
\item[(a)] if triangles $x$ and $y$ in $S$ share the same left leg, or the same right leg, or the same long leg, then they are equal;
\item[(b)] for triangles $x$ and $y$ in $S$, if $x_1=y_0$ then $x_2\neq y_1$;
\item[(c)] $b_m = S^{\mp}$;
\item[(d)] $e_m = S^{\pm}$.
\end{itemize} 
Some consequences are:
\begin{itemize}
\item[(e)] the function $S\lra \{1,2,\dots ,m-1\}$, taking $x$ to its middle vertex $x_1$, is a bijection;
\item[(f)] for $x,y\in S$, if $x_0<y_1\le x_1$ then $x_0\le y_0$;
\item[(g)] for $x,y\in S$, $y_2\le x_1$ if and only if $x_0< y_1<x_1$.
\end{itemize} 
Injectivity of the function in (e) follows from the alternating position (AP) condition proved in \cite{orientals}; surjectivity is a simple induction related to the excision of extremals
algorithm in \cite{orientals}. 

These objects $S$ are in bijection with triangulations of the polygon with $m$ sides.
For example, the triangulation \eqref{heptagon1} of the heptagon has 
\begin{equation}
S=(013)(123)(035)(345)(056) \ .
\end{equation} 
It will be useful to have some notation, foreshadowed in Section~\ref{sect:overview}, involving the inverse $t_S$ of the bijection in (e) above:
\begin{eqnarray}\label{tlr}
t_S : \{1,2,\dots ,m-1\} \lra S \nonumber \\
t_S(i) = (\ell_S(i-1), i , r_S(i)+1) \ .
\end{eqnarray}
Notice that, in the process, we are defining functions 
\begin{eqnarray}
\ell_S : \mathbf{m-1} \lra \mathbf{m-1}
\end{eqnarray}
and 
\begin{eqnarray}\label{rightfns}
r_S: \{1,2,\dots ,m-1\} \lra \{1,2,\dots ,m-1\} \ .
\end{eqnarray}
However, by putting $\ell_S(m-1)=m-1$ and $r_S(0)=0$, we extend them to bottom-and-top-preserving functions
\begin{equation}\label{lr}
\ell_S, r_S : \mathbf{m} \lra \mathbf{m} \ .
\end{equation}
We put 
\begin{equation}
S_{\ell} = \{i : 0\le i<m-1, \ell_S(i)=i \}
\end{equation}
and
\begin{equation}
S_r = \{i :  0<i\le m-1, r_S(i)=i \} \ .
\end{equation}

\begin{proposition}\label{Sbijlbf} 
The formula
$$S=\{(\ell(i-1), i , \ora{\ell}(i)+1) : 0<i<m-1\}$$
establishes a bijection between lbf morphisms $\ell : \mathbf{m} \lra \mathbf{m}$ 
and  2-cells $S:b_m \Lra e_m$ in $\O_m$. 
\end{proposition}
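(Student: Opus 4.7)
The plan is to define two maps inverse to each other: a forward map $\phi$ sending an lbf $\ell$ to the set $\phi(\ell)=\{(\ell(i-1),i,\ora{\ell}(i)+1): 1\le i\le m-1\}$, and a backward map $\psi$ sending a 2-cell $S$ to the function $\ell_S$ defined in \eqref{tlr}--\eqref{lr}. The identity $\psi(\phi(\ell))=\ell$ will be immediate from the construction, since the triangle in $\phi(\ell)$ with middle vertex $i$ has left coordinate $\ell(i-1)$. The substance of the proof is therefore to show that $\psi$ takes values in lbfs, that $\phi$ takes values in 2-cells, and that $\phi(\psi(S))=S$.

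I would first check that $\ell_S$ is an lbf for every 2-cell $S$. Conditions (i) and (iii) are immediate from the definition; for condition (ii), I would apply property (f) to the triangles $t_S(j+1)$ and $t_S(i+1)$, noting that $\ell_S(j)\le i<i+1\le j+1$ gives $x_0<y_1\le x_1$ and hence $x_0\le y_0$, i.e., $\ell_S(j)\le\ell_S(i)$. Next I would verify $r_S=\ora{\ell_S}$ using the characterization in Proposition~\ref{bijbf}: the inequalities $\ell_S(r_S(i))<i\le r_S(i)$ follow from property (g) applied to $t_S(i)$ and $t_S(r_S(i)+1)$, and minimality holds because any putative smaller $j'\in\{i,\ldots,r_S(i)-1\}$ would, via (g) applied to $t_S(j'+1)$ and $t_S(i)$, be forced to satisfy $\ell_S(j')\ge i$. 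The boundary case $r_S(i)=\top$ is handled separately by observing that (g) rules out any witness $j\in\mathring{\ord m}$ when $r_S(i)=m-1$. Together these give $\phi(\psi(S))=S$.

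The more delicate direction is showing $\phi(\ell)$ satisfies conditions (a)--(d). For (a), uniqueness across left and right legs is forced by uniqueness of the middle vertex; for long legs I would argue by contradiction, observing that if $\ell(i-1)=\ell(i'-1)$ and $r(i)=r(i')$ with $i<i'$, then $j':=i'-1$ satisfies $\ell(j')=\ell(i-1)<i\le j'<r(i)$, contradicting the minimality defining $r(i)$. Condition (b) is exactly Proposition~\ref{fixbf}(ii). For (c) and (d), I would identify the outer triangle by setting $i^\star:=\max\{i:\ell(i-1)=0\}$; condition (ii) of an lbf forces either $i^\star=m-1$ or $\ell(i^\star)=i^\star$, after which the emptiness of $\{j : \ell(j)<i^\star\le j \}$ gives $r(i^\star)=m-1$, exhibiting $(0,m)$ as a long leg of $\phi(\ell)$. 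Then Proposition~\ref{fixbf}(i) places each boundary edge $(i,i+1)$ as a short leg, while Proposition~\ref{fixbf}(iii) and (v) realize every non-outer long leg $(\ell(i-1),r(i)+1)$ as either the left leg of the triangle with middle $r(i)+1$ (when $\ell r(i)=\ell(i-1)$) or the right leg of the triangle with middle $\ell(i-1)$ (when $r\ell(i-1)=r(i)$).

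The hardest part, I expect, will be the case analysis in (c) and (d): parts (iii) and (v) of Proposition~\ref{fixbf} produce precisely the clean dichotomy described above, but one must track edge cases ($a=0$ versus $a>0$, and $r(i)=m-1$ versus $r(i)<m-1$) carefully enough to see that every non-boundary edge appears exactly once as a long leg and exactly once as a short leg, while $(0,m)$ appears only as a long leg and each $(i,i+1)$ only as a short leg.
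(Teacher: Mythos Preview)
Your proposal is correct and follows essentially the same route as the paper: define inverse maps $\phi$ and $\psi$, verify that $\psi(S)$ is an lbf using properties (f) and (g), and verify that $\phi(\ell)$ satisfies (a)--(d) using Proposition~\ref{fixbf}. Two small remarks: your use of Proposition~\ref{fixbf}(ii) for condition (b) is actually cleaner than the paper's use of (iv); on the other hand, your sketch for (d) explicitly covers $e_m\subseteq S^{\pm}$ and $S^{\mp}\subseteq b_m$ but leaves the direction $S^{\pm}\subseteq e_m$ (that every non-boundary short leg is the long leg of another triangle) to the final ``exactly once'' claim --- this can be filled in either by a direct ``choose the largest $j$ with $\ell(j)=i$'' argument as the paper does, or by a counting argument once you know (a) and (b) force all $2(m-1)$ short legs to be distinct.
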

\begin{proof}
Suppose $\ell$ is a lbf. Put $r=\ora{\ell}$. 
Define $S$ as in the proposition. 
We need to prove properties (a), (b), (c), (d) for $S$.
Property (e) is clear, so only one case for property (a) remains. 
Take $x=(\ell(i-1), i , r(i)+1)$
and $y=(\ell(j-1), j , r(j)+1)$ in $S$. We need to see that 
$\ell (i-1) = \ell (j-1)$ and $r(i)=r(j)$ imply $i=j$.
Assume $i<j$ since the hypotheses are symmetric.
From $r(i)=r(j)$, we see that any $k$ with
$\ell (k) < i \le k$ will have $r (j) \le k$. However, $j-1$ is just such
a $k$ since $\ell (j-1) = \ell (i-1) \le i-1 < i \le j-1$. So $r (j) \le j-1$.
But then $j \le r(j) \le j-1$ gives a contradiction.   

For property (b), take $x$ and $y$ in $S$ as above. 
This time we need to see that $x_1=y_0$ implies $x_2\neq y_1$;
that is, $i=\ell(j-1)$ implies $r(i)+1\neq j$. We do have
$j-1<m-1$ and $0<i=\ell(j-1)$, so, using (iv) of Proposition~\ref{fixbf},
we have $r(i)=r\ell (j-1) \ge r(j) \ge j > j-1$. So $r(i)+1\neq j$.  

Now we look at property (c). Since, by (i), $\ell(i-1) < i$, there must exist an 
$i$ with $\ell (i-1) = 0$.
Choose the largest such $i$. Then $\ell(k)<i\le k$ implies $\ell(k) \le i-1 <k$,
so, by (ii) for an lbf, $\ell(k) \le \ell(i-1)=0$. So $\ell(k)=0$. 
By maximality of $i$, $k\le i-1$, a contradiction. 
So the set of $k$, for which $r(i)$ is the minimum, is empty.
So $r(i) =m-1$. This shows that $b_m\subseteq S^-$.
Yet $b_m\cap S^+=\varnothing$ since there is no $x\in S$
with $x_0<0$ or $x_2>m$. 
So $b_m\subseteq S^{\mp}$.
Next we need to show that, for any $x\in S$, if $x^-\neq (0m)$ then $x^- \in S^+$.
Assume $x=(\ell(i-1), i , r(i)+1)$ and $0< \ell(i-1)$ (since the case
$r(i)+1<m$ will follow by left-right symmetry). 
We need to show that there exists either $y$ or $z$ in $S$ with
$y\partial_0 = (l(i-1),r(i)+1)$ or $z\partial_2 = (l(i-1),r(i)+1)$.
The last two conditions force $y=(\ell(j-1),j, r(j)+1)$ to have $j=\ell(i-1)$
and $z=(\ell(k-1),k, r(k)+1)$ to have $k=r(i)+1$. So we must see that,
with these choices for $j$ and $k$, either  $r(j)=r(i)$ or $\ell(k-1)=\ell(i-1)$.
Assume $\ell(k-1)\neq \ell(i-1)$; that is, $\ell r (i) \neq \ell(i-1)$. 
Then part (v) of Proposition~\ref{fixbf} implies $r(i)=r\ell(i-1)=r(j)$.

We also need to show $e_m = S^{\pm}$. Take $(i,i+1)\in e_m$. 
By part (i) of Proposition~\ref{fixbf}, either $i = r(i)$ or $\ell(i) =i$. 
So either $(i,i+1) = (\ell(i-1), i,r(i)+1)\partial_0$
or $(i,i+1) = (\ell(i), i+1,r(i+1)+1)\partial_2$. 
However we cannot have $(i,i+1) = (\ell(j-1), j,r(j)+1)\partial_1$
since $\ell(j-1)$ and $r(j)+1)$ have $j$ strictly between them 
and so cannot be consecutive.
So $e_m\subseteq S^{\pm}$. 
Now we prove $S^{\pm}\subseteq e_m$ by showing that $x\in S$
with $(x_1,x_2)\notin e_m$ or $(x_0,x_1)\notin e_m$ implies $x^+ \subseteq S^-$.
Suppose $x=(\ell(i-1), i , r(i)+1)$ and $(i , r(i)+1)\notin e_m$.
Then $i<r(i)$, so, by part (i) of Proposition~\ref{fixbf}, we have $\ell(i)=i$. 
Let $j$ be the largest such that $i\le j\le r(i)$ and $\ell(j)=i$.
By part (v) of Proposition~\ref{fixbf}, we have $r(j) = r(i)$.
So $x\partial_0 = (i , r(i)+1) = (\ell(j-1),j,r(j)+1)\partial_1 \in S^-$. 
The case $x\partial_2\notin e_m$ is similar. 

So far, we have defined a function $\mathrm{Tam}_m \lra \S_m$ which
is obviously injective. If we accept that the two sets have the same cardinality
(given by a Catalan number), the bijection follows. However, we will
describe the inverse function and complete the proof. 

Suppose $S:b_m \Lra e_m$ is a 2-cell. Using property (e) for $S$, 
define top-and-bottom-preserving functions 
$\ell = \ell_S$ and $r=r_S$ by \eqref{tlr} and \eqref{lr}. 
Then $\ell(i-1)< i < r(i)+1$ for $0<i<m-1$.
This gives property (i) for $\ell$ to be an lbf  (and for $r$ to be an rbf).
To prove property (ii) for $\ell$, take $\ell(j)\le i\le j<m$ and put
$x=(\ell(j),j+1,r(j+1)+1)$ and $y=(\ell(i),i+1,r(i+1)+1)$ which are in $S$.
Then $x_0<y_1\le x_1$. By property (f) for $S$, $x_0\le y_0$; so $\ell(j)\le \ell(i)$. 
It remains to show that $r = \ora{\ell}$. 
This breaks into two parts. First we must see that $r(i)<m-1$ implies 
$\ell r(i)<i\le r(i)$. For this, take $x=(\ell r(i),r(i)+1,r(r(i)+1)+1)$ and
$y=(\ell (i-1),i,r(i)+1)$ so that $x_1=y_2$. By the `only if' part of property (g) for $S$,
$x_0<y_1<y_2$, our desired result. The second part is minimality; that is, to show
$\ell(j)<i\le j$ implies $r(i)\le j$. Put $x=(\ell r(i),r(i)+1,r(r(i)+1)+1)$ and
$y=(\ell(i-1),i,r(i)+1)$ so that $x_0<y_1<x_1$. 
By the `if' part of property (g) for $S$, $y_2\le x_1$; that is, $r(i)\le j$.
\end{proof}

Now let us look at the morphisms of $\S_m$.
As 3-cells in $\O_m$, they are well defined in \cite{orientals} as sets $\theta$ of
four-element subsets $x=(x_0x_1x_2x_3)$ of $\mathbf{m+1}$ satisfying
well formedness and movement conditions; also see \cite{Street-ParityComplexes,Street-ParityComplexesCorrections}.
The indecomposable $\theta : S\lra T$ are precisely the singleton sets $\{ x\}$
such that
\begin{equation}\label{xmove}
T = (S\smallsetminus x^{-})\cup x^{+} \ ,
\end{equation}
where 
\begin{eqnarray*}
x^{-} = (x_0x_1x_2)(x_0x_2x_3) \ \text{ and  }  \ x^{+} = (x_0x_1x_3)(x_1x_2x_3) \ .
\end{eqnarray*}
So $x^{-}$ consists of the odd faces of $x$ and $x^{+}$ the
even faces in the sense of \cite{orientals}, although the notation is from \cite{Street-ParityComplexes}. 
Let us put
\begin{equation}
xS=T \ \text{   and   } \ S=Tx
\end{equation}
when \eqref{xmove} holds.
In pictures, this means that $S$ should contain the domain of
$x=(x_0x_1x_2x_3)$ in the diagram \eqref{indec3}
as part of its triangulation and $T$ should be obtained from $S$ by 
replacing that part by the codomain.

\begin{eqnarray}\label{indec3}
\xymatrix{
x_0 \ar[r]^-{} \ar[rd]^-{} \ar[d]_-{} & x_3 \\
x_1 \ar[r]_-{}  & x_2 \ar[u]^-{} }
\qquad
\xymatrix{
\stackrel{x} \lra}
\qquad
\xymatrix{
x_0 \ar[r]^-{} \ar[d]_-{} & x_3 \\
x_1 \ar[r]_-{} \ar[ru]^-{}  & x_2 \ar[u]^-{} }
\end{eqnarray}
In terms of bracketings, it means that $T$ is obtained from $S$ by moving just 
one set of brackets to the right:
$$((W_0((W_1W_2)W_3))W_4) \lra ((W_0(W_1(W_2W_3)))W_4)$$
or
$$(W_0(((W_1W_2)W_3)W_4)) \lra (W_0((W_1(W_2W_3))W_4))$$
where the $W_i$ are meaningfully binarily bracketed words in $I$ and $X$.

Now let us look at the 2-cells of $\S_m$.
As 4-cells in $\O_n$, they are well defined in \cite{orientals} as sets $a$ of
five-element subsets $x=(x_0x_1x_2x_3x_4)$ of $\mathbf{m+1}$ satisfying
well formedness and movement conditions.
The indecomposable $a : \theta \Lra \phi$ are precisely 
the singleton sets $\{ x\}$ such that
\begin{equation}
\phi = (\theta \smallsetminus x^{-})\cup x^{+} \ ,
\end{equation}
where 
$$x^{-} = (x_0x_1x_2x_4)(x_0x_2x_3x_4)$$ and 
$$x^{+} = (x_1x_2x_3x_4)(x_0x_1x_3x_4)(x_0x_1x_2x_3) \ .$$
In diagram~\eqref{O4}, we see the only non-identity 2-cell in $\S_4$.

\begin{equation}\label{O4}
\xymatrix{
(012)(023)(034) \ar[d]_{(0234)}^(0.8){\phantom{AAAAAAAA}}="1" \ar[rr]^{(0123)}  && (013)(123)(034) \ar[d]^{(0134)}_(0.8){\phantom{AAAAAAAA}}="2" \ar@{=>}"1";"2"^-{(01234)}
\\
(012)(024)(234) \ar[rd]_-{(0124)} && (014)(123)(134) \ar[ld]^-{(1234)} 
\\
& (014)(124)(234) 
}
\end{equation}

The following result is immediate on combining Proposition~\ref{Sbijlbf} with \cite{TamariHuang} and \cite{orientals}.

\begin{proposition}\label{TLatt} For 2-cells $S,T:b_n\Lra e_n : 0\lra n$ in the $n$-category 
$\O_n$, the following conditions are equivalent:
\begin{itemize}
\item[(i)] $\ell_S \le \ell_T$;
\item[(ii)] $r_S \le r_T$;
\item[(iii)] there exists a 3-cell $S\lra T$ in $\O_n$;
\item[(iv)] for all $x\in S$ and $y\in T$, if $x_1=y_1$ then $x_0\le y_0$; 
\item[(v)] for all $x\in S$ and $y\in T$, if $x_1=y_1$ then $x_2\le y_2$;
\item[(vi)] for all $x\in S$ and $y\in T$, if $x_1=y_1$ then $x_0\le y_0$ and $x_2\le y_2$.
\end{itemize}
\end{proposition}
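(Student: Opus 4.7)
The plan is to group the equivalences into three parts. The equivalence (i)$\Leftrightarrow$(ii) is immediate from Proposition~\ref{ordbf}: the bijection $\ell \mapsto \ora{\ell}$ between lbfs and rbfs preserves the pointwise order in both directions.

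Next, I would unpack (iv), (v) and (vi) directly via (\ref{tlr}) and Proposition~\ref{Sbijlbf}. The latter says that $x \mapsto x_1$ is a bijection $S \to \{1, \ldots, n-1\}$, and similarly for $T$, so for each $i$ in this range the unique triangles of $S$ and $T$ with middle vertex $i$ are $(\ell_S(i-1), i, r_S(i)+1)$ and $(\ell_T(i-1), i, r_T(i)+1)$. Condition (iv) therefore reads $\ell_S(i-1) \le \ell_T(i-1)$ for all $i \in \{1, \ldots, n-1\}$; together with the boundary equality $\ell_S(n-1) = n-1 = \ell_T(n-1)$, this is exactly $\ell_S \le \ell_T$, giving (i)$\Leftrightarrow$(iv). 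Dually, (v) unpacks as $r_S \le r_T$, giving (ii)$\Leftrightarrow$(v). Condition (vi) is the conjunction of (iv) and (v), and so follows from these together with (i)$\Leftrightarrow$(ii).

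The main step, and where the cited results do their work, is (i)$\Leftrightarrow$(iii). Each indecomposable 3-cell $\{x\}$ of $\O_n$, described by (\ref{xmove}) and (\ref{indec3}), implements exactly one elementary bracket-move $(UV)W \to U(VW)$ on the bracketings corresponding to $S$ and $T$ under Proposition~\ref{Sbijlbf}. By \cite{TamariHuang} such bracket-moves generate precisely the covering relation of the Tamari order $\ell_S \le \ell_T$, and by the structural description of orientals from \cite{orientals} every 3-cell $\theta \colon S \to T$ in $\O_n$ factors as a finite composite of such indecomposables. Hence a 3-cell $S \to T$ exists iff $T$ is reachable from $S$ by a finite sequence of bracket-moves, iff $\ell_S \le \ell_T$. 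The main obstacle, which is the reason for the appeal to the cited works, is precisely the factorisation of an arbitrary 3-cell as a composite of indecomposables and the identification of the resulting reflexive transitive closure with the Tamari order; both are treated as standard in \cite{orientals} and \cite{TamariHuang}, so no further argument is needed here.
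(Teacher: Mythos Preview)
Your proposal is correct and follows essentially the same approach as the paper: the paper's proof is literally the single sentence ``immediate on combining Proposition~\ref{Sbijlbf} with \cite{TamariHuang} and \cite{orientals},'' and you have simply unpacked this citation, spelling out how (iv)--(vi) translate via~\eqref{tlr} into the pointwise inequalities and how the factorisation of 3-cells into indecomposables links (iii) to the Tamari order.
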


So this means that the Tamari lattice $\mathrm{Tam}_m$ of the previous section is obtained from the 2-category
$\S_m$ by identifying all the 2-cells. That is, there is a 2-functor
$\S_m \lra \mathrm{Tam}_m$ (where the only 2-cells in 
$\mathrm{Tam}_m$ are the identities); and every 2-functor 
$\S_m \lra \C$ which takes all 2-cells in $\S_m$ to identities
factors uniquely through $\S_m \lra \mathrm{Tam}_m$. 


\section{Skew associativity} 
\label{sect:a}

We start by considering simpler structures than that of skew monoidal category: define a {\em skew semimonoidal category} to be a category \C equipped with a functor $\ox\colon\C\x\C\to\C$ and a natural transformation 
$\alpha$ with components 
$$\xymatrix{(X\ox Y)\ox Z \ar[r]^{\alpha_{X,Y,Z}} & X\ox(Y\ox Z) }$$
which satisfies the pentagon condition \eqref{pentagon}.

  Let $\Tam$ be the poset whose objects consist of a positive integer $m$ and an element $S$ of the Tamari lattice $\Tam_m$, with $(m,S)\le(n,T)$ when $m=n$ and $S\le T$ in $\Tam_m$. There is a functor (order-preserving function) $\ox\colon\Tam\x\Tam\to\Tam$ defined by 
$$(m,S)\ox(n,T)=(m+n,S\star T)$$
where
$$S\star T = S\cup \{(0,m,m+n)\} \cup (m+T)$$
and where $m+T = \{(m+i,m+j,m+k) : (i,j,k) \in T\}$.  
In other words,
\begin{equation*}
  t_{S\star T} (i) =
      \begin{cases}
         t_S(i) & \text{if } \  i < m  \\
         (0,m,m+n) & \text{if } \  i = m  \\
         (m,m,m)+t_T(i-m) & \text{if } \ i>m \ .
       \end{cases}
\end{equation*}
The existence of the 3-cell 
$$(0,m,m+n,m+n+k) : (S\star T)\star U \Lra S\star (T\star U)$$
in $\O_{m+n+k}$ shows we have lax associativity in the form
\begin{equation}\label{Tamlaxassoc}
(S\star T)\star U \le S\star (T\star U) 
\end{equation} 
and so that $\Tam$ becomes a skew semimonoidal category.

On the other hand there is no lax unit for $\star$. In particular,  $\mathrm{Tam}_1 = \{\varnothing\}$ is the terminal poset; 
its element is the identity 2-cell of $b_1 = e_1 : 0\lra 1$ in $\O_1$.
This element does not act as a lax unit since we have 
$$\varnothing \star S =  \{(0,1,1+m)\} \cup (1+S)$$ 
and 
$$S \star \varnothing = S\cup \{(0,m,m+1)\}$$
in $\mathrm{Tam}_{m+1}$; so these cannot be Tamari compared with 
$S\in \mathrm{Tam}_{m}$.  

We write $X$ for the object of $\Tam$  given by $1$ with the unique element of $\Tam_1$.

\begin{proposition}
  $\Tam$ is the free skew semimonoidal object on a single object, with $X$ as generator.
\end{proposition}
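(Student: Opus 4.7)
The plan is to verify the universal property directly. Let $\C$ be a skew semimonoidal category with a chosen object $C \in \C$; I must construct a unique strict skew semimonoidal functor $F\colon\Tam\to\C$ with $F(X)=C$.

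Each step of the construction is forced. On objects, $F(X)=C$ together with strict preservation of $\star$ forces $F(m,S)$ to be the bracketing of $m$ copies of $C$ in $\C$ specified by $S$. On morphisms, since $\Tam_m$ is a poset there is at most one morphism between any two objects, so I only need to define $F(S\le T)$ whenever $S\le T$. A cover relation $S\lessdot S'$ in $\Tam_m$ corresponds, by Proposition~\ref{TLatt} and the description of indecomposable 3-cells in Section~\ref{sect:orientals}, to a single basic move of the form $(UV)W\to U(VW)$, and strict preservation of $\alpha$ forces $F$ to send it to the corresponding instance of the associator in $\C$. For a general $S\le T$ I define $F(S\le T)$ as the composite of associators along any saturated chain from $S$ to $T$.

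The main obstacle is well-definedness: two saturated chains from $S$ to $T$ must yield equal composites in $\C$. This is classical Mac\,Lane--Stasheff coherence for the associator alone, which goes through in the skew setting unchanged since we compose only in one direction. Concretely, any two saturated chains between the same endpoints can be connected by a sequence of local moves across $2$-dimensional faces of the CW structure on $\Tam_m$ underlying the Stasheff associahedron. Two types of $2$-face occur: squares, corresponding to two basic moves applied to disjoint sub-bracketings, which commute by naturality of $\alpha$ together with bifunctoriality of $\otimes$; and pentagons, corresponding to two overlapping basic moves, which commute by the pentagon axiom~\eqref{pentagon}. These pentagons are precisely the relations witnessed by the $4$-cells of $\O_m$ of the form displayed in~\eqref{O4}.

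Granted well-definedness, functoriality is immediate from the chain-composite definition and strict preservation of $\alpha$ holds by construction on cover relations. Strict preservation of $\otimes$ on objects is built into the definition; on morphisms, given $S\le S'$ in $\Tam_m$ and $T\le T'$ in $\Tam_n$, one may choose a saturated chain from $S\star T$ to $S'\star T'$ that first passes through $S'\star T$, so that its image under $F$ factors as $(1_{F(S')}\otimes F(T\le T'))\circ(F(S\le S')\otimes 1_{F(T)})$, which by bifunctoriality of $\otimes$ equals $F(S\le S')\otimes F(T\le T')$. Uniqueness of $F$ is clear since each step of the construction is forced.
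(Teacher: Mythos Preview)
Your proof is correct and follows essentially the same approach as the paper's: both define $F$ on objects as forced, extend to generating (indecomposable/cover) morphisms via associators tensored with identities, and reduce existence to Mac~Lane's coherence argument for associativity alone, noting that his proof goes through unchanged in the skew setting. Your version is somewhat more explicit about the structure of the coherence argument (the square and pentagon $2$-faces) and about why $\otimes$ is preserved on morphisms, but this is elaboration rather than a different route.
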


\begin{proof}
Let \C be a skew semimonoidal category and $C$ an object of \C. We have to show that there is a unique functor $F$ from $\Tam$ to \C which strictly preserves the skew monoidal structure and which sends $X$ to $C$.

It is clear that $F$ must send $(m,S)$ to the tensor product in \C of $m$ copies of $C$, bracketed according to $S$. An associativity morphism $(XY)Z\to X(YZ)$ in $\Tam$ is sent to the corresponding associativity morphism in \C. The indecomposable morphisms in $\Tam$ are obtained from the associativity morphisms by (repeatedly) tensoring on either side with identity morphisms, and so must be sent to the morphism in \C obtained by tensoring the corresponding associativity morphism by the corresponding identity morphisms. Finally a general morphism of $\Tam$ is a composite of indecomposable morphisms, and so must be sent to the corresponding composite in \C.

This description makes it clear that such an $F$ is unique, while existence reduces to the fact that any diagram in \C built up out of composites of tensors of associativity morphisms must commute. The analogous fact for monoidal categories is due to Mac~Lane \cite{MacLane-monoidal}, but an inspection of his proof shows that it works equally well for skew semimonoidal categories.
\end{proof}

\section{Left skew units}
\label{sect:l}

In the previous section we saw that the disjoint union $\Tam$ of
the Tamari posets is the free skew semimonoidal 
category on one object. 
This involved bracketings like $X((XX)X)$ of a single
object $X$. If we are to have a unit object $I$ then we will
need bracketings like $X((XI)(X(IX)))$ of $X$s and $I$s.

As a very basic initial step, we could consider the
structure consisting of a category \C equipped with
an arbitrary functor $\ox\colon\C\x\C\to\C$ 
and an arbitrary object $I$. For want of a better name,
we call this a pointed magmoidal category. 

Since this structure only 
involves functors of the form $\C^n\to \C$, and 
no natural transformations between them, the free
such structure on a discrete category will still be
discrete. In particular, the free pointed magmoidal category \Fpm on \ord 1 will be discrete. 

An object will consist of a non-empty finite ordinal \ord m, a subset $u\subseteq \ord m$, and an element $S\in\Tam_m$ of the Tamari lattice. As has already been anticipated, the cardinality  $m$ of $\ord m$ indicates that $(\ord m,u,S)$ is an $m$-fold product of copies of $X$ and $I$, bracketed according to $S$, while the subset $u$ indicates which of these factors are $X$s.
Sometimes it is useful to think of $u$ as the image of an injective order-preserving map $\partial_u$.

The tensor product $(\ord m,u,S)\ox(\ord n,v,T)$ has the form $(\ord m+\ord n,u\star v,S\star T)$. The product $S\star T$ was defined in Section~\ref{sect:a}; recall that 
$$\ell_{S\star T}(i) =
\begin{cases}
  \ell_S(i) & \text{if $i<m-1$} \\
0 & \text{if $i=m-1$} \\
m+\ell_T(i-m) & \text{if $i>m$.} 
\end{cases}$$
Finally $u\star v$ is defined by saying that $\partial_{u\star v}$ is the ordinal sum $\partial_u+\partial_v$ of the maps $\partial_u$ and $\partial_v$. Thus $u\star v$ contains all $i\in u$ as well as all $m+j$ with $j\in v$.

The unit object $I$ is $(1,\emptyset,*)$, where $*$ is the unique bracketing in $\Tam_1$, and the generator is $(1,1,*)$.

The universal property is clear; we record it as:

\begin{proposition}
  The free pointed magmoidal category \Fpm
on one object
is the discrete category in which an object consists 
of a non-empty finite ordinal $\ord m$, 
a subset $u\subseteq \ord m$,
and an element $S\in\Tam_m$ of the Tamari lattice.
\end{proposition}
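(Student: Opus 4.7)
The proof is essentially a bookkeeping exercise; the content lies in checking that the given data $(\ord m, u, S)$ classifies bracketed words in $X$ and $I$ up to equality. Since the signature of a pointed magmoidal category consists only of a binary functor $\otimes$ and an object $I$, with no natural transformations imposed between them, there are no equations forced between distinct formal expressions built from the generator. Consequently the free structure on a discrete category must itself be discrete, and the task reduces to enumerating the formal expressions and verifying the required universal property.

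First I would observe that every object of \Fpm, being built from the generator and from $I$ by iterated application of $\otimes$, corresponds to a meaningfully bracketed non-empty word whose letters come from the alphabet $\{X,I\}$. Such a word is determined by three pieces of data: its length $m\ge 1$, the set $u\subseteq\ord m$ of positions at which the letter is $X$ (the complementary positions being $I$), and the bracketing, which by the results of Section~\ref{sect:bracketings} may be encoded as an element $S$ of the Tamari lattice $\Tam_m$. Conversely every triple $(\ord m,u,S)$ arises from a unique such word. Thus the set of objects of \Fpm is in bijection with the set of triples described. One then verifies directly that the unit object $(1,\emptyset,*)$, the generator $(1,\{0\},*)$, and the tensor $(\ord m,u,S)\otimes(\ord n,v,T)=(\ord{m+n},u\star v,S\star T)$ given in the statement correspond under this bijection to the intended operations on bracketed words.

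For the universal property, let \C be a pointed magmoidal category and $C\in\C$. Any functor $F\colon\Fpm\to\C$ strictly preserving the structure and sending the generator to $C$ must satisfy $F(1,\emptyset,*)=I_\C$, $F(1,\{0\},*)=C$, and $F((\ord m,u,S)\otimes(\ord n,v,T))=F(\ord m,u,S)\otimes_\C F(\ord n,v,T)$. Since every object of \Fpm is built from $I$ and the generator by iterated tensor, this determines $F$ on objects, and on morphisms there is nothing to determine because \Fpm is discrete. This gives uniqueness. For existence one defines $F$ by induction on $m$ using the three equations above; well-definedness requires checking that the recipe does not depend on how one expresses $(\ord m,u,S)$ as a tensor of smaller objects, but no such choice is in fact involved because the element $S\in\Tam_m$ singles out a canonical binary bracketing of the word, which dictates a unique sequence of tensor operations.

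The only point that could cause trouble is the passage from the informal picture of ``bracketed words'' to the formal combinatorial data $(\ord m,u,S)$; but this is exactly the content of the correspondence between bracketings and elements of the Tamari lattice recalled in Section~\ref{sect:bracketings}, applied letterwise using $u$ to record which factors are $X$ and which are $I$. With that correspondence in hand, both the description of tensor and the verification of the universal property are mechanical.
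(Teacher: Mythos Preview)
Your argument is correct and follows the same approach as the paper, which in fact gives no formal proof at all: the paper simply observes beforehand that the structure involves only functors and no natural transformations (hence the free object on a discrete category is discrete), describes the objects and tensor, and then states the proposition with the remark that ``the universal property is clear''. Your proposal is a careful spelling-out of exactly this reasoning.
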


\begin{remark}\label{rmk:fat}
\Fpm is equivalent to the following category, which has multiple isomorphic copies of each object. An object is a finite non-empty totally ordered set $M$ equipped with a subset $u\subseteq M$ and an lbf $\ell\colon M\to M$. A morphism $(M,u,\ell)\to(M',u',\ell')$ is an invertible order-preserving function $M\cong M'$ which respects $u$ and $\ell$ in the evident sense. We call this category \Fpmfat. 
\end{remark}

We shall gradually introduce further structure, 
culminating in the structure of skew monoidal 
category. But all of this extra structure will involve
natural transformations; the functors $\C^n\to \C$
will not change. Thus the objects of the corresponding
free structures will remain the same as the objects
of \Fpm.

As a first step, we define a {\em skew-left-unital magmoidal
category} to be a category \C equipped with a functor
$\ox\colon\C\x\C\to\C$, an object $I$, and a 
natural transformation $\lambda\colon I\ox-\to 1_\C$.

\subsection{Shrink morphisms}

First we recall that any order-preserving surjection $\sigma\colon M\to N$ between finite non-empty totally-ordered sets has both a right adjoint $\sigma^*$ and a left adjoint $\sigma^!$, given by 
\begin{align*}
  \sigma^*(j) &= \max\{i\mid \sigma(i)\le j\} \\
  \sigma^!(j) &= \min \{i\mid j\le \sigma(i)\}
\end{align*}
and in fact since $\sigma$ is surjective we may replace the defining inequalities $\sigma(i)\le j$ and $j\le\sigma(i)$ by equalities.

Let $(M,u,S)$ and $(N,v,T)$ be objects of \Fpmfat. Define a {\em shrink morphism} from $(M,u,S)$ to 
$(N,v,T)$ to be a surjective order-preserving map 
$\sigma\colon M\to N$ for which 
\begin{enumerate}[(a)]
\item $\sigma^*$ induces a bijection from $v$ to $u$
\item if $\sigma(j)=\sigma(j+1)$ then 
$\sigma\ell_S(j)=\sigma(j)$
\item $\ell_T=\sigma \ell_S \sigma^*$
\end{enumerate}

\begin{remark}\label{rmk:u}
Since $\sigma\sigma^*=1$, if condition (a) holds then the inverse $u\to v$ to $\sigma^*$ must be given by $\sigma$ itself. But we cannot merely replace (a) by the condition that $\sigma$ induce a bijection from $u$ to $v$ since the inverse could still fail to be $\sigma^*$. What would be needed in addition is that each element of $u$ is terminal in its $\sigma$-fibre; in other words, that each element of $u$ is {\em not} in $\sigma^\ell$.
\end{remark}

\begin{remark}\label{rmk:shrink-r}
  In light of Proposition~\ref{prop:cob-for-lbf}, if (b) holds then (c) is equivalent to $r_T=\sigma r_S \sigma^!$.
\end{remark}

\begin{proposition}
  Composites of shrink morphisms are 
shrink morphisms.
\end{proposition}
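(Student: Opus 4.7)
The plan is to verify conditions (a), (b), (c) in turn for the composite $\tau\sigma$ of shrink morphisms $\sigma\colon(M,u,S)\to(N,v,T)$ and $\tau\colon(N,v,T)\to(P,w,U)$; order-preservation and surjectivity of the composite are automatic, so nothing need be said about them.

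For (a), the key is that right adjoints compose contravariantly, so $(\tau\sigma)^{*}=\sigma^{*}\tau^{*}$. This map restricts on $w$ to the composite of the bijection $w\to v$ supplied by (a) for $\tau$ with the bijection $v\to u$ supplied by (a) for $\sigma$. Condition (c) is handled by direct substitution: $\tau\sigma\,\ell_S\,\sigma^{*}\tau^{*}=\tau\,\ell_T\,\tau^{*}=\ell_U$.

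The main obstacle is condition (b). Given $j$ with $\tau\sigma(j)=\tau\sigma(j+1)$, I would split into two cases according to whether or not $\sigma$ already collapses $j$ and $j+1$. If $\sigma(j)=\sigma(j+1)$, then (b) for $\sigma$ gives $\sigma\ell_S(j)=\sigma(j)$, and applying $\tau$ finishes the case. The delicate case is $\sigma(j)<\sigma(j+1)$. Setting $k=\sigma(j)$, surjectivity and monotonicity of $\sigma$ force $\sigma^{*}(k)=j$, so (c) for $\sigma$ yields $\ell_T(k)=\sigma\ell_S(j)$. On the other hand, since $k<k+1\le\sigma(j+1)$ and $\tau(k)=\tau(\sigma(j+1))$, monotonicity of $\tau$ forces $\tau(k)=\tau(k+1)$; then (b) for $\tau$ applied at $k$ gives $\tau\ell_T(k)=\tau(k)$. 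Combining the two identities produces $\tau\sigma\ell_S(j)=\tau(k)=\tau\sigma(j)$, as required.

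The preservation of the subsets $u$, $v$, $w$ does not require any further argument beyond condition (a) already verified (compare Remark~\ref{rmk:u}), so assembling the three parts completes the proof.
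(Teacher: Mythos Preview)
Your proof is correct and follows essentially the same approach as the paper: conditions (a) and (c) are handled directly via composition of adjoints, and condition (b) is split into the two cases $\sigma(j)=\sigma(j+1)$ and $\sigma(j)<\sigma(j+1)$, with the second case reduced via $\sigma^{*}\sigma(j)=j$ to an application of (c) for $\sigma$ and (b) for $\tau$ at $k=\sigma(j)$. The only cosmetic difference is that the paper notes $\sigma(j+1)=\sigma(j)+1$ explicitly (using surjectivity), while your monotonicity sandwich $\tau(k)\le\tau(k+1)\le\tau(\sigma(j+1))=\tau(k)$ bypasses the need for that observation.
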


\begin{proof}
Let $\sigma\colon(M,u,S)\to(N,v,T)$ and 
$\tau\colon(N,v,T)\to(P,w,U)$ be shrink morphisms, and consider $\tau\sigma$. Conditions (a) and
(c) in the definition of shrink morphism clearly 
hold for $\tau\sigma$, but we should check (b).

Suppose then that $\tau\sigma(j)=\tau\sigma(j+1)$.
Either $\sigma(j)=\sigma(j+1)$, or $\sigma(j+1)=\sigma(j)+1$ and $\tau(\sigma(j)+1)=\tau\sigma(j)$.

If $\sigma(j+1)=\sigma(j)$, then $\sigma\ell_S(j)=\sigma(j)$ since $\sigma$ is a shrink morphism. It
clearly follows that $\tau\sigma\ell_S(j)=\tau\sigma(j)$.

Otherwise, we have $\tau\ell_T\sigma(j)=\tau\sigma(j)$,
since $\tau$ is a shrink morphism, and we 
have $\sigma^*\sigma(j)=j$, since $\sigma(j)<\sigma(j+1)$. Thus
$\tau\sigma\ell_S(j)=\tau\sigma\ell_S\sigma^*\sigma(j)=\tau\ell_T\sigma(j)=\tau\sigma(j)$ as required.
\end{proof}

We write \Flfat for the category of shrink morphisms, and \Fl for the full subcategory consisting of those object $(M,u,S)$ for which $M$ is an ordinal $\ord m$.

\subsection{Existence of a shrink morphism }

\begin{lemma}\label{lemma:sigma-existence}
  If $(M,u,S)$ is an object of \Flfat, an order-preserving 
surjection $\sigma\colon M\to N$ defines a shrink
morphism with domain $(M,u,S)$ if and only if 
\begin{enumerate}[(a)]
\item $\sigma^\ell\cap u=\emptyset$ 
\item if  $j\in\sigma^\ell$ then $\sigma\ell_S(j)=\sigma(j)$.
\end{enumerate}
The codomain is then $(N,\sigma u,T)$, where
$\ell_T=\sigma\ell_S\sigma^*$.
\end{lemma}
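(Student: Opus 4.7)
The plan is to show that the conditions (a) and (b) of the lemma are precisely a repackaging of the three defining conditions of a shrink morphism, once one observes that the codomain is forced by $\sigma$ and $(M,u,S)$.

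First I would dispose of condition (b) of the definition of shrink morphism, since it is literally the same as condition (b) of the lemma: by the definition of $\sigma^{\ell}$ from Section~\ref{sect:simplicial}, the statements ``$j \in \sigma^{\ell}$'' and ``$\sigma(j) = \sigma(j+1)$'' are tautologically equivalent.

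Next I would treat condition (a) of the definition, showing that the assertion ``$\sigma^{*}$ induces a bijection $v \to u$'' is equivalent to the conjunction of ``$\sigma^{\ell} \cap u = \varnothing$'' and ``$v = \sigma u$''. The key computation is that, since $\sigma$ is an order-preserving surjection, $\sigma\sigma^{*} = 1_{N}$, and for $k \in M$ the equality $\sigma^{*}\sigma(k) = k$ holds if and only if $k$ is the greatest element of its $\sigma$-fibre, which is to say $k \notin \sigma^{\ell}$ (using the convention that $\top \notin \sigma^{\ell}$). If $\sigma^{*}$ bijects $v$ with $u$, then for each $k \in u$ we have $k = \sigma^{*}(j)$ for some $j \in v$; applying $\sigma$ and using $\sigma\sigma^{*} = 1$ gives $j = \sigma(k)$, hence $v = \sigma u$ and $\sigma^{*}\sigma(k) = k$, so $u \cap \sigma^{\ell} = \varnothing$. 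Conversely, if $u \cap \sigma^{\ell} = \varnothing$ and $v = \sigma u$, then $\sigma|_{u} \colon u \to v$ and $\sigma^{*}|_{v}$ are mutually inverse, so $\sigma^{*}$ bijects $v$ with $u$.

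Finally I would address condition (c). This condition, $\ell_{T} = \sigma \ell_{S} \sigma^{*}$, does not constrain $\sigma$ but rather forces $T$ to be the element of $\Tam_{N}$ whose lbf is $\sigma \ell_{S} \sigma^{*}$. To see that this actually defines an element of $\Tam_{N}$, I would invoke Proposition~\ref{prop:cob-for-lbf}: as an order-preserving surjection between non-empty finite totally ordered sets, $\sigma$ preserves top and bottom, and admits both adjoints $\sigma^{!} \dashv \sigma \dashv \sigma^{*}$, so $\sigma \ell_{S} \sigma^{*}$ is automatically an lbf on $N$. Combining these three observations, an order-preserving surjection $\sigma \colon M \to N$ defines a shrink morphism $(M,u,S) \to (N,v,T)$ if and only if (a) and (b) of the lemma hold, $v = \sigma u$, and $\ell_{T} = \sigma \ell_{S} \sigma^{*}$, which gives both directions simultaneously.

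There is no significant obstacle here; the only subtlety is the one already flagged in Remark~\ref{rmk:u}, namely that ``$\sigma^{*}$ bijects $v$ with $u$'' is strictly stronger than ``$\sigma$ bijects $u$ with $v$'', the extra content being exactly the disjointness $u \cap \sigma^{\ell} = \varnothing$.
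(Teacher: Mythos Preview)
Your proposal is correct and follows essentially the same approach as the paper, which simply cites Remark~\ref{rmk:u} for the equivalence involving condition~(a) and observes that (b) and (c) are immediate. You spell out the content of that remark in detail and add the explicit invocation of Proposition~\ref{prop:cob-for-lbf} to confirm that $\sigma\ell_S\sigma^*$ really is an lbf on $N$, a point the paper leaves implicit.
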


\begin{proof}
Condition (b) is copied directly from the definition. Condition (a) must hold by Remark~\ref{rmk:u}. Suppose then that the two conditions do hold. The fact that condition (a) in the definition holds follows again by Remark~\ref{rmk:u}, while condition (c) holds by definition of $T$.
\end{proof}

\begin{remark}
  In the special case where $\sigma$ has the 
form $\sigma_j\colon \ord{n+1}\to \ord n$, then (a) says
that $j\notin u$, while (b) says that $\ell_S(j)=j$.
\end{remark}
 
\begin{proposition}\label{prop:CS-factorization}
Suppose that $\sigma\colon(\ord m,u,S)\to(\ord n,v,T)$ is a 
shrink morphism with $m\neq n$, and let 
$j$ be the least element of $\ord m$ with $\sigma(j)=\sigma(j+1)$; in other words,  $j$ is least element of $\sigma^\ell$.  Then there is a unique factorization of the 
shrink morphism $\sigma$ as a shrink morphism $\sigma_j$ followed by a shrink morphism $\sigma'$. 
\end{proposition}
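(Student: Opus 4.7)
The plan is to first perform the factorization at the level of order-preserving surjections using a standard fact about the simplex category, then check that each factor carries a shrink-morphism structure, and finally observe that uniqueness is automatic.

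Since $\sigma(j)=\sigma(j+1)$, there is a unique order-preserving surjection $\sigma'\colon\ord{m-1}\to\ord n$ with $\sigma=\sigma'\sigma_j$; explicitly $\sigma'(k)=\sigma(k)$ for $k<j$ and $\sigma'(k)=\sigma(k+1)$ for $k\ge j$. Uniqueness of this factorization at the level of order-preserving maps is the standard consequence of $\sigma_j$ being epic. Once we know that both $\sigma_j$ and $\sigma'$ are shrink morphisms, this gives uniqueness of the factorization as shrink morphisms, since the codomain of $\sigma_j$ (and hence the domain of $\sigma'$) is forced by Lemma~\ref{lemma:sigma-existence}.

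Next I would verify that $\sigma_j$ defines a shrink morphism out of $(\ord m,u,S)$ using the remark following Lemma~\ref{lemma:sigma-existence}. That $j\notin u$ is immediate from $j\in\sigma^\ell$ together with condition~(a) for $\sigma$. To see $\ell_S(j)=j$, minimality of $j$ in $\sigma^\ell$ forces $\sigma$ to be injective on $\{0,1,\dots,j\}$; condition~(b) for $\sigma$ supplies $\sigma\ell_S(j)=\sigma(j)$, and combined with $\ell_S(j)\le j$ this pins down $\ell_S(j)=j$. Lemma~\ref{lemma:sigma-existence} then identifies the codomain of $\sigma_j$ as $(\ord{m-1},u',S')$ with $u'=\sigma_j u$ and $\ell_{S'}=\sigma_j\ell_S\sigma_j^*$.

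Finally, I would verify the three shrink-morphism conditions for $\sigma'\colon(\ord{m-1},u',S')\to(\ord n,v,T)$. Taking right adjoints in $\sigma=\sigma'\sigma_j$ gives $\sigma^*=\sigma_j^*(\sigma')^*$, whence $(\sigma')^*=\sigma_j\sigma^*$. Condition~(c) is then immediate:
$$
\sigma'\ell_{S'}(\sigma')^* \;=\; \sigma'(\sigma_j\ell_S\sigma_j^*)(\sigma')^* \;=\; (\sigma'\sigma_j)\,\ell_S\,(\sigma_j^*(\sigma')^*) \;=\; \sigma\ell_S\sigma^* \;=\; \ell_T.
$$
For~(a), injectivity of $(\sigma')^*$ together with the cardinality count $|v|=|u|=|u'|$ (the last equality because $j\notin u$, so $\sigma_j$ is injective on $u$) reduces the claim to the inclusion $(\sigma')^*v\subseteq u'$, and this follows from $(\sigma')^*v=\sigma_j\sigma^*v\subseteq\sigma_ju=u'$ using condition~(a) for $\sigma$. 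Condition~(b) is the main obstacle: given $\sigma'(k)=\sigma'(k+1)$, I would case-split on the position of $k$ relative to $j$, using minimality of $j$ in $\sigma^\ell$ to rule out $k=j-1$, and then in the remaining cases invoke condition~(b) for $\sigma$ at either $k$ or $k+1$ together with the description $\ell_{S'}=\sigma_j\ell_S\sigma_j^*$. The bookkeeping with $\sigma_j^*$ across the value $j$ is the least formal part of the argument.
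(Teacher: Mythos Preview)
Your proposal is correct and follows essentially the same route as the paper's proof: factor $\sigma=\sigma'\sigma_j$ at the level of surjections, use minimality of $j$ plus condition~(b) for $\sigma$ to get $\ell_S(j)=j$ and hence that $\sigma_j$ is a shrink morphism, then verify (a), (b), (c) for $\sigma'$, with (c) coming from $\sigma^*=\sigma_j^*(\sigma')^*$. The only noteworthy difference is in condition~(b) for $\sigma'$: the paper avoids your case split by observing that $\sigma'=\sigma\delta_j$ (equivalently $\sigma_j^*=\delta_j$), so $\sigma'(k)=\sigma'(k+1)$ forces $\sigma\delta_j(k)=\sigma(\delta_j(k)+1)$ directly, after which one applies condition~(b) for $\sigma$ at $\delta_j(k)$ and unwinds $\sigma'\ell_{S'}(k)=\sigma'\sigma_j\ell_S\sigma_j^*(k)=\sigma\ell_S\delta_j(k)$ uniformly; your case split (minimality rules out $k<j$, then handle $k\ge j$) reaches the same conclusion with a bit more bookkeeping.
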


\begin{proof}
Since $\sigma(j+1)=\sigma(j)$ and $\sigma$ is 
a shrink morphism, we have
$\sigma\ell_S(j)=\sigma(j)$. Since $\ell_S(j)\le j$
and by the minimality of $j$, this implies 
$\ell_S(j)=j$. Also $j\in\sigma^\ell$ so $j\notin u$.
It follows that $\sigma_j$ defines a shrink morphism $(\ord m,u,S)\to(\ord{m-1},u',S')$.

We have a factorization $\sigma=\sigma'\sigma_j$
for a unique order-preserving map $\sigma'$. It
remains to show that $\sigma'$ is a shrink morphism $(\ord{m-1},u',S')\to(\ord n,v,T)$. 

Condition (a) for $\sigma'$ follows immediately 
from the corresponding conditions for $\sigma$
and $\sigma_j$.

For (c), we have 
$\ell_T = \sigma\ell_S\sigma^* = \sigma'\sigma_j \ell_S\sigma^*_j (\sigma')^* = \sigma' \ell_{S'} (\sigma')^*$
using the definition of $\ell_{S'}$ and the fact
that $\sigma$ is a shrink morphism.

Finally for (b), suppose that $\sigma'(k)=\sigma'(k+1)$.
Since $\sigma'=\sigma'\sigma_j\delta_j=\sigma\delta_j$, 
we have $\sigma\delta_j(k)=\sigma'(k)=\sigma'(k+1)=\sigma\delta_j(k+1)$. Since $\delta_j(k)<\delta_j(k)+1\le \delta_j(k+1)$ and $\sigma$ is order-preserving, it
follows that $\sigma\delta_j(k)=\sigma(\delta_j(k)+1)$.
Since $\sigma$ is a shrink morphism,
$\sigma\ell_S\delta_j(k)=\sigma\delta_j(k)=\sigma'(k)$.
But $\delta_j=\sigma^*_j$, and so 
$\sigma\ell_S\delta_j(k)=\sigma'\sigma_j\ell_S\sigma^*_j(k)=\sigma'\ell_{S'}(k)$, whence 
$\sigma'\ell_{S'}(k)=\sigma'(k)$ as required.
\end{proof}

This allows us to describe a normal form for shrink morphisms. 

\begin{corollary}
  Any shrink morphism in \Fl can be written uniquely
as a composite 
$$\xymatrix{
(\ord m,u,S) \ar[r]^-{\sigma_{j_1}} & 
(\ord{m-1},u_1,S_1) \ar[r]^-{\sigma_{j_2}} & 
(\ord{m-2},u_2,S_2) \ar[r] & \ldots \ar[r]^-{\sigma_{j_r}} & (\ord n, v,T) }$$ 
with $j_1\le j_2\le\ldots\le j_s$.
\end{corollary}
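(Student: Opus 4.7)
The plan is to induct on $m-n$ and peel off one $\sigma_{j}$ at a time using Proposition~\ref{prop:CS-factorization}. In the base case $m=n$ the shrink morphism $\sigma$ is forced to be the identity and I take the empty composite ($r=0$). For the inductive step $m>n$, Proposition~\ref{prop:CS-factorization} gives a unique factorization $\sigma=\sigma'\circ\sigma_{j_1}$ in which $j_1$ is the least element of $\sigma^\ell$ and $\sigma'\colon(\ord{m-1},u_1,S_1)\to(\ord n,v,T)$ is again a shrink morphism. Applying the inductive hypothesis to $\sigma'$ produces a composite $\sigma_{j_r}\circ\ldots\circ\sigma_{j_2}$ with $j_2\le\ldots\le j_r$.

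The one substantive thing left to check for existence is that $j_1\le j_2$, where $j_2$ is the least element of $(\sigma')^\ell$. I would argue by contradiction: if $j_2<j_1$, then $j_2+1\le j_1$, so $\sigma_{j_1}$ acts as the identity on both $j_2$ and $j_2+1$. Combining this with $\sigma'(j_2)=\sigma'(j_2+1)$ yields $\sigma(j_2)=\sigma(j_2+1)$, contradicting the minimality of $j_1$ in $\sigma^\ell$. This is really the only non-trivial observation in the proof.

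For uniqueness, I will show that in any presentation $\sigma=\sigma_{j_r}\circ\ldots\circ\sigma_{j_1}$ with $j_1\le\ldots\le j_r$, the index $j_1$ must coincide with $\min\sigma^\ell$. On the one hand $\sigma_{j_1}(j_1)=j_1=\sigma_{j_1}(j_1+1)$, so $j_1\in\sigma^\ell$. On the other hand, for $k<j_1$ we have $k+1\le j_1\le j_i$ for every $i$, so each $\sigma_{j_i}$ fixes $k$ and $k+1$; hence $\sigma(k)=k$ and $\sigma(k+1)=k+1$, ruling out $k\in\sigma^\ell$. Once $j_1$ is pinned down, the uniqueness clause of Proposition~\ref{prop:CS-factorization} identifies the remaining factor $\sigma'$, and the induction hypothesis then determines $j_2,\ldots,j_r$ uniquely.

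The main potential obstacle is bookkeeping: one must be sure that the ``least $j$ in $\sigma^\ell$'' construction iterates consistently and never produces a smaller index later. The contradiction argument above rules this out in a single line, so beyond the invocation of Proposition~\ref{prop:CS-factorization} and a careful setup of the induction I do not anticipate any further difficulty.
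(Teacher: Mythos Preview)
Your proof is correct and is precisely the argument the paper has in mind: the corollary is stated without proof immediately after Proposition~\ref{prop:CS-factorization}, and the intended justification is exactly the induction on $m-n$ that you carry out, peeling off $\sigma_{j_1}$ with $j_1=\min\sigma^\ell$ at each stage. Your verification that $j_1\le j_2$ and your uniqueness argument are the details the paper leaves to the reader.
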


We now describe how our rewriting for maps in 
$\Delta$ applies to shrink morphisms.

\begin{proposition}\label{prop:CS-rewrite}
  Suppose that $\sigma_{j+1}\colon(\ord m,u,S)\to(\ord{m-1},v,T)$ and $\sigma_i\colon(\ord{m-1},v,T)\to(\ord{m-2},w,U)$ are shrink morphisms with \hbox{$i\le j$.} Then there are shrink morphisms
$\sigma_i\colon(\ord m,u,S)\to(\ord{m-1},v',T')$ and $\sigma_j\colon(\ord{m-1},v',T')\to(\ord{m-2},w,U)$ for a
unique choice of $v'$ and $T'$, and $\sigma_i\sigma_{j+1}=\sigma_j\sigma_i$.
\end{proposition}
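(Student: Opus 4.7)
The strategy is to observe that $\sigma_i\sigma_{j+1}$ is itself a shrink morphism from $(\ord m,u,S)$ to $(\ord{m-2},w,U)$, being a composite of shrink morphisms, and then to apply Proposition~\ref{prop:CS-factorization} to it. The point is that $\sigma_i$ already occupies the position required to be the ``first factor'' given by that proposition.

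The first step is to identify the minimum of $(\sigma_i\sigma_{j+1})^\ell$. Since $\sigma_{j+1}$ fails to be injective only at $j+1$ and $\sigma_i$ only at $i$, a direct calculation (or equivalently the simplicial identity $\sigma_i\sigma_{j+1}=\sigma_j\sigma_i$) gives $(\sigma_i\sigma_{j+1})^\ell=\{i,j+1\}$. Since $i\le j<j+1$, the minimum of this set is $i$. Proposition~\ref{prop:CS-factorization} then yields a unique factorization of the shrink morphism $\sigma_i\sigma_{j+1}$ as a shrink morphism $\sigma_i\colon(\ord m,u,S)\to(\ord{m-1},v',T')$ followed by a shrink morphism $\sigma'\colon(\ord{m-1},v',T')\to(\ord{m-2},w,U)$, and the intermediate data $(v',T')$ is uniquely determined by this factorization.

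The second step uses the underlying identity in $\mathbf{\Delta}$. We have $\sigma'\cdot\sigma_i=\sigma_i\sigma_{j+1}=\sigma_j\sigma_i$ (the last equality being a standard simplicial relation, valid since $i\le j$), and since $\sigma_i$ is an epimorphism in $\mathbf{\Delta}$ this forces $\sigma'=\sigma_j$. Therefore the second factor in the shrink-morphism factorization is precisely $\sigma_j$, which gives the required shrink morphism $\sigma_j\colon(\ord{m-1},v',T')\to(\ord{m-2},w,U)$ and the desired equation $\sigma_i\sigma_{j+1}=\sigma_j\sigma_i$ at the level of shrink morphisms. Uniqueness of $(v',T')$ is inherited directly from the uniqueness clause of Proposition~\ref{prop:CS-factorization}.

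The only mildly tricky step is the combinatorial book-keeping that identifies $i$ as the minimum of $(\sigma_i\sigma_{j+1})^\ell$, but this is essentially a calculation in $\mathbf{\Delta}$; no genuinely new ideas about shrink morphisms beyond those already packaged into Proposition~\ref{prop:CS-factorization} are required.
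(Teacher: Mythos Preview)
Your proof is correct and follows essentially the same approach as the paper: both compose to get a shrink morphism, identify $i$ as the least element of $\sigma^\ell$, invoke Proposition~\ref{prop:CS-factorization}, and then use surjectivity (equivalently, that $\sigma_i$ is epi) to force $\sigma'=\sigma_j$. The only difference is that you spell out the computation of $(\sigma_i\sigma_{j+1})^\ell$ a bit more explicitly.
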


\begin{proof}
The composite $\sigma=\sigma_i\sigma_{j+1}$ is a shrink morphism, and $i$ is the least element of $\ord m$ with $\sigma(i)=\sigma(i+1)$, so by Proposition~\ref{prop:CS-factorization} there is a unique factorization of $\sigma$ as a shrink morphism $\sigma_i\colon(\ord m,u,S)\to(\ord{m-1},v',T')$ followed by a shrink morphism $\sigma'\colon(\ord{m-1},v',T')\to(\ord{m-2},w,U)$. But $\sigma'$ could only be $\sigma_j$ by surjectivity of $\sigma_i$. 
\end{proof}

The direction of the rewrite is significant here.
If $S\in\Tam_3$ consists of the triangles $(012)$ and $(023)$, corresponding to the bracketing $(II)I$, then 
there are shrink morphisms as in the solid
part of the diagram 
$$\xymatrix{
(\ord 3,\emptyset,S) \ar[r]^{\sigma_0} \ar@{.>}[d]_{\sigma_1} & 
(\ord 2,\emptyset,S') \ar[d]^{\sigma_0} \\
(\ord 2,\emptyset,T) \ar[r]_{\sigma_0} & (\ord 1,\emptyset,S'') }$$
but no shrink morphism 
$\sigma_1\colon(\ord 3,\emptyset,S)\to(\ord 2,\emptyset,T)$. 
(There is only one $T\in\Tam_2$.)

\subsection{Presentation of the category of shrink morphisms.}

It now follows that the category \Fl of shrink morphisms is generated by shrink morphisms of the form $\sigma_i\colon(\ord n,u,S)\to(\ord{n-1},v,T)$. Abstractly, these can be specified by giving an object $(\ord n,u,S)$ and an $i$ in the range $0\le i\le n-2$, satisfying the conditions $i\notin u$ and $\ell_S(i)=i$.
Then $v$ is given by $\sigma_i u$ and $T$ by 
$\ell_T = \sigma_i \ell_S \delta_{i}$. 

The only relations we need are those given in 
Proposition~\ref{prop:CS-rewrite}: $\sigma_i\sigma_{j+1}=\sigma_j\sigma_i$ whenever $i\le j$. We only need 
apply these in the forward direction, and we can apply
them for any such composable pair $\sigma_i\sigma_{j+1}$ of shrink morphisms. There will be such a composable pair with domain $(\ord n,u,S)$ whenever $0\le i\le j\le n-3$ with  $i,j+1\notin u$ and $\ell_S(i)=i$, and finally with either $\ell_S(j+1)=j+1$ or $\ell_S(j+1)=j=i$.

For future reference we record this as:

\begin{proposition}\label{prop:CS-presentation}
  The category \Fl of shrink morphisms may be 
presented as a category via the generators and relations
described above.
\end{proposition}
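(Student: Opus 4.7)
The plan is to verify the three standard requirements for a presentation. Let $\mathcal{P}$ denote the category presented by the listed generators and relations, and let $F\colon\mathcal{P}\to\Fl$ be the functor that sends each generator to the corresponding shrink morphism. First I would check that $F$ is well defined, which reduces to verifying that the relations hold in \Fl; this is immediate from Proposition~\ref{prop:CS-rewrite}. Next I would check that $F$ is surjective on morphisms: iterating Proposition~\ref{prop:CS-factorization}, any shrink morphism with $m>n$ factors as a generator followed by a shrink morphism with strictly smaller domain ordinal, so by induction on $m-n$ every shrink morphism is a composite of generators.

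The main step, and the only nontrivial one, is faithfulness of $F$. For this I would use a rewriting argument. Read each relation $\sigma_i\sigma_{j+1}=\sigma_j\sigma_i$ with $i\le j$ as a directed rewrite on words of generators (written in diagrammatic, first-applied-first order): replace an adjacent pair $(\sigma_{j+1},\sigma_i)$ by $(\sigma_i,\sigma_j)$. Each such rewrite strictly decreases the tuple of indices in the lexicographic order, because at the rewrite position the index $j+1$ is replaced by $i\le j<j+1$ while all earlier positions are unchanged. Since lex order on tuples of bounded non-negative integers is well founded, repeated rewriting always terminates. A word is irreducible exactly when no adjacent pair satisfies $j_k>j_{k+1}$, that is, exactly when its index sequence is non-decreasing; these are precisely the normal forms of the Corollary following Proposition~\ref{prop:CS-factorization}.

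Given two words $w,w'$ of generators with $F(w)=F(w')$, reducing both to irreducible form yields normal-form composites $\bar w,\bar{w}'$ still representing $F(w)$; by the uniqueness clause of that Corollary they coincide as literal words, and since each rewrite is an instance of a relation, $w\equiv\bar w=\bar{w}'\equiv w'$ in $\mathcal{P}$, so $F$ is faithful. I expect the only delicate point to be verifying that the rewriting process always stays inside chains of composable shrink morphisms, so that the rewritten intermediate datum $(v',T')$ actually exists in \Fl; but this is exactly the content of Proposition~\ref{prop:CS-rewrite}, which both supplies the required intermediate object and guarantees its uniqueness.
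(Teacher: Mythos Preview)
Your argument is correct and matches the paper's approach. The paper does not give a separate proof of this proposition; it simply records it after the discussion of Proposition~\ref{prop:CS-factorization}, its Corollary, and Proposition~\ref{prop:CS-rewrite}, with the words ``It now follows\ldots''. Your three-step verification (well-definedness of $F$, surjectivity by iterated factorization, faithfulness by rewriting to the unique normal form of the Corollary) is exactly the argument implicit in that discussion, and your observation that confluence need not be checked separately because the Corollary already guarantees uniqueness of the irreducible form in \Fl is the right way to close the loop.
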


\subsection{Tensor product and shrink morphisms}

The objects of \Fl are the objects of \Fpm, and so we have a tensor product for them. There is an evident faithful functor $\Ul\colon \Fl\to\Delta_\bot$ which sends the tensor product on objects to the corresponding ordinal sum. 

\begin{proposition}
If $\sigma\colon(\ord m,u,S)\to(\ord{m'},u',S')$ and $\tau\colon(\ord n,v,T)\to(\ord{n'},v',T')$ are shrink morphisms then so is $\sigma+\tau\colon(\ord m,u,S)\ox(\ord n,v,T)\to(\ord{m'},u',S')\ox(\ord{n'},v',T')$.
\end{proposition}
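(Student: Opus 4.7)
The plan is to verify conditions (a), (b), (c) of the definition of shrink morphism directly for $\sigma+\tau$, exploiting the block structure of ordinal sum. First I would establish the pointwise formula
\begin{equation*}
(\sigma+\tau)^*(k) =
\begin{cases}
\sigma^*(k) & \text{if } k<m' \\
m+\tau^*(k-m') & \text{if } k\ge m'
\end{cases}
\end{equation*}
which drops out of the defining formula for a right adjoint, using that $\sigma(m-1)=m'-1$ because every order-preserving surjection between ordinals preserves top and bottom.

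Condition (a) is then almost immediate: under $(\sigma+\tau)^*$, the ``first block'' $u'\subseteq u'\star v'$ is sent bijectively to $u$ by (a) for $\sigma$, while the ``second block'' $\{m'+j:j\in v'\}$ is sent bijectively to $\{m+i:i\in v\}$ by (a) for $\tau$. For condition (b), I would argue by cases on $j$. The equality $(\sigma+\tau)(j)=(\sigma+\tau)(j+1)$ cannot hold at the junction $j=m-1$, since there $(\sigma+\tau)(m-1)=m'-1<m'\le (\sigma+\tau)(m)$. For $j<m-1$ the problem lives entirely in the $\sigma$-block and reduces to (b) for $\sigma$ via the formula $\ell_{S\star T}(j)=\ell_S(j)$; for $j\ge m$ it lives in the $\tau$-block and reduces to (b) for $\tau$ via $\ell_{S\star T}(j)=m+\ell_T(j-m)$.

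For condition (c), I would check $\ell_{S'\star T'}(k)=(\sigma+\tau)\ell_{S\star T}(\sigma+\tau)^*(k)$ pointwise according to the three branches of the formula for $\ell_{S'\star T'}$: for $k<m'-1$ the computation is internal to the $\sigma$-block and follows from (c) for $\sigma$ (noting that $\sigma^*(k)<m-1$, since $\sigma^*(m'-1)=m-1$ and $\sigma^*$ is strictly increasing); for $k=m'-1$ both sides are $0$; for $k\ge m'$ the computation is internal to the $\tau$-block and follows from (c) for $\tau$.

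The only place requiring genuine attention is the middle case of $\ell_{S\star T}$, where $\ell_{S\star T}(m-1)=0$ regardless of $S$ and $T$; this is what pairs with the $k=m'-1$ case in the verification of (c), and also explains why the junction $j=m-1$ does not arise in (b). Beyond that, the argument is pure bookkeeping on the ordinal-sum formulas, and I anticipate no substantive obstacle.
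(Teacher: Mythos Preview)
Your proof is correct. The block formula for $(\sigma+\tau)^*$ is right, the junction case $j=m-1$ is handled properly (surjections preserve top and bottom, so $(\sigma+\tau)(m-1)=m'-1<m'=(\sigma+\tau)(m)$), and the three-case check of (c) goes through; in particular the observation that $\sigma^*$ is injective (being a section of $\sigma$) and hence strictly increasing, so that $k<m'-1$ forces $\sigma^*(k)<m-1$, is exactly what keeps the first branch inside the $\sigma$-block.

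Your approach differs from the paper's. The paper does not verify (a)--(c) for general $\sigma+\tau$; instead it reduces to the case where one factor is a generating shrink morphism $\sigma_j$ and the other is an identity, invoking the earlier result that shrink morphisms are generated by the $\sigma_j$. It then checks Lemma~\ref{lemma:sigma-existence} for that specific $\sigma_j$ acting on the product, and computes the codomain triangulation explicitly via $\sigma_j\ell_{S\star T}\delta_j$. Your direct argument is more self-contained --- it does not rely on the normal-form/generation machinery --- at the cost of somewhat more bookkeeping with the adjoint $(\sigma+\tau)^*$. The paper's reduction, on the other hand, keeps each verification very concrete (everything is a single $\sigma_j$) and reuses infrastructure already in place. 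Both are perfectly sound routes to the result.
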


\begin{proof}
It suffices to consider the case where one of the shrink morphisms is a generating surjection and the other is an identity. 

Suppose first that $\sigma$ is $\sigma_j\colon(\ord m,u,S)\to(\ord{m-1},u',S')$, while $\tau$ is the identity on $(\ord n,v,T)$. 
Note that $0\le j\le m-2$.
Then $\sigma+\tau$ is the function $\sigma_j\colon\ord{m+n}\to \ord{m-1+n}$. Since $\sigma$ is a shrink morphism, we have $j\notin u$, and so $j\notin u\star v$; again, since $\sigma$ is a shrink morphism we have $\ell_S(j)=j<m-1$, and so $\ell_{S\star T}(j)=\ell_S(j)$. This proves that $\sigma_j$ defines a shrink morphism with domain $(\ord m,u,S)\ox(\ord n,v,T)$. 

The codomain has left bracketing function 
$\sigma_j \ell_{S\star T} \delta_j$. Now
\begin{align*}
  \sigma_j \ell_{S\star T} \delta_j (k) &=
  \begin{cases}
    \sigma_j \ell_{S\star T} (k) & \text{if $k<j$} \\
    \sigma_j \ell_{S\star T}(k+1) & \text{if $k\ge j$} 
  \end{cases}
\\
&=
\begin{cases}
  \sigma_j \ell_S(k) & \text{if $k<j$} \\
  \sigma_j \ell_S(k+1) & \text{if $j\le k<m-2$} \\
  \sigma_j(0) & \text{if $k=m-2$} \\
  \sigma_j(m+\ell_T(k+1-m) & \text{if $k>m-2$}
\end{cases}
\\
&=
\begin{cases}
  \ell_{S'}(k) & \text{if $k<m-2$} \\
  0 & \text{if $k=m-2$} \\
  m-1+\ell_T(k+1-m) & \text{if $k>m-2$} 
\end{cases}
\\
&= \ell_{S'\star T}(k)
\end{align*}
and so the codomain does indeed have triangulation $S'\star T$.

We leave to the reader the case where $\sigma$ is the identity on $(\ord m,u,S)$ and $\tau$ is $\sigma_j\colon(\ord n,v,T)\to(\ord{n-1},v',T')$.
\end{proof}



It now follows, by faithfulness of the forgetful functor $\Ul\colon\Fl\to\dbot$, that the tensor product on $\Delta_\bot$ lifts to a tensor product on \Fl, strictly preserved by $U_\lambda$.

\begin{proposition}
  For any object $(\ord m,u,S)$ of \Fl, the function $\sigma_0\colon\ord{m+1}\to \ord m$ defines a morphism $(1,\emptyset,*)\ox(\ord m,u,S)\to(\ord m,u,S)$.
\end{proposition}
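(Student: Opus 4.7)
The plan is to apply Lemma~\ref{lemma:sigma-existence} to the map $\sigma_0\colon\ord{m+1}\to\ord m$, viewed as having source the tensor $(1,\emptyset,*)\ox(\ord m,u,S)$. First, I would unpack this source using the tensor formulas. On the subset side, it has underlying ordinal $\ord{m+1}$ and subset $\emptyset\star u = 1+u := \{1+j:j\in u\}\subseteq\{1,\dots,m\}$. On the bracketing side, applying the formula for $\ell_{S\star T}$ with $S$ replaced by $*\in\Tam_1$ and $T$ replaced by $S$ (so with ``$m$'' in the formula equal to $1$) gives $\ell_{*\star S}(0)=0$, $\ell_{*\star S}(j)=1+\ell_S(j-1)$ for $1\le j\le m-1$, and $\ell_{*\star S}(m)=m$ (the top-element convention).

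Next, I would check the two hypotheses of Lemma~\ref{lemma:sigma-existence} for $\sigma_0$. Since $\sigma_0(k)=k$ for $k=0$ and $\sigma_0(k)=k-1$ otherwise, we have $\sigma_0^{\ell}=\{0\}$. Condition~(a) then reads $\{0\}\cap(1+u)=\emptyset$, which holds because $1+u\subseteq\{1,\dots,m\}$. Condition~(b) requires, for the unique $j=0\in\sigma_0^{\ell}$, that $\sigma_0\ell_{*\star S}(0)=\sigma_0(0)$; both sides equal $0$ since $\ell_{*\star S}(0)=0$. So the lemma applies and $\sigma_0$ is a shrink morphism out of $(\ord{m+1},1+u,*\star S)$.

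Finally, I would identify the codomain provided by the lemma as $(\ord m,\sigma_0(1+u),T)$ with $\ell_T=\sigma_0\ell_{*\star S}\sigma_0^*$, and check this coincides with $(\ord m,u,S)$. For the subset, $\sigma_0(1+j)=j$ for each $j\in u$, so $\sigma_0(1+u)=u$. For the bracketing, recall $\sigma_0^*(j)=j+1$ for every $j\in\ord m$. A short case split on $j$ now suffices: for $0\le j\le m-2$, we have $\ell_{*\star S}(j+1)=1+\ell_S(j)\ge 1$, and $\sigma_0$ strips off the leading $1$ to give $\ell_T(j)=\ell_S(j)$; for $j=m-1$, we have $\ell_{*\star S}(m)=m$, and $\sigma_0(m)=m-1=\ell_S(m-1)$, again by the top-element convention. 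Thus $\ell_T=\ell_S$, and the codomain is exactly $(\ord m,u,S)$, as claimed. The entire argument is essentially bookkeeping; the only point demanding care is correctly tracking the index shift introduced by tensoring on the left with the unit of length one, and remembering the conventional values of $\ell$ at the top element when handling the boundary case $j=m$.
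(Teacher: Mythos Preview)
Your proof is correct and takes essentially the same approach as the paper's: both verify the hypotheses of Lemma~\ref{lemma:sigma-existence} (the paper implicitly, via the remark following it that for a generator $\sigma_j$ the conditions reduce to $j\notin u$ and $\ell(j)=j$), and then compute $\sigma_0\ell_{*\star S}\delta_0=\ell_S$. Your treatment is simply more explicit; the separate handling of the top element $j=m-1$ is harmless but in fact unnecessary, since $1+\ell_S(m-1)=m=\ell_{*\star S}(m)$ makes the general formula cover that case too.
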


\begin{proof}
By definition of $(1,\emptyset,*)\ox(\ord m,u,S)$ the induced triangulation $T$ has $\ell_T(0)=0$. Also 
$\sigma_0\ell_T\delta_0(k)=\sigma_0\ell_T(k+1)=\sigma_0(1+\ell_S(k))=\ell_S(k)$.
\end{proof}

Using faithfulness of $\Ul$ once again, we deduce that these $\sigma_0$  are the components of a natural transformation $\lambda\colon I\ox-\to 1$. Thus \Fl is a skew left unital magmoidal category, and $\Ul$ preserves all of this structure. 

\subsection{Universal property of shrink morphisms.}

\begin{proposition}\label{prop:CS-univ-prop}
\Fl is the free skew left unital magmoidal category on 
one object.
\end{proposition}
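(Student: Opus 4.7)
The plan is to verify the universal property using the presentation of \Fl by generators and relations obtained in Proposition~\ref{prop:CS-presentation}. Fix a skew left unital magmoidal category \C and an object $C\in\C$. First I would define the candidate functor $F\colon\Fl\to\C$ on objects: for an object $(\ord m,u,S)$, since strict preservation of $\ox$ and $I$ forces the value, set $F(\ord m,u,S)$ to be the $m$-fold tensor in \C of copies of $C$ (at positions $i\in u$) and $I$ (at positions $i\notin u$), bracketed according to the triangulation $S$. In particular, the generator $X=(1,\{0\},*)$ maps to $C$ and the unit $(1,\varnothing,*)$ maps to $I$.

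Next I would define $F$ on the generating shrink morphisms $\sigma_i\colon(\ord n,u,S)\to(\ord{n-1},v,T)$. The conditions $i\notin u$ and $\ell_S(i)=i$ tell us that the letter at position $i$ is $I$ and is bracketed to the right. Thus, $S$ contains the triangle $(i,i+1,r_S(i+1)+1)$, and the sub-bracketing on positions $i,\ldots,r_S(i+1)+1$ in $F(\ord n,u,S)$ has the form $I\ox F(W)$ for a well-defined bracketed sub-word $W$, sitting inside the outer bracketing exactly as $F(W)$ does in $F(\ord{n-1},v,T)$ (this last fact is the defining computation of $T$ via $\ell_T=\sigma_i\ell_S\delta_i$). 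Consequently the shrink is realized unambiguously by tensoring $\lambda_{F(W)}$ with identities in the outer context; declare $F(\sigma_i)$ to be this morphism. This assignment is forced by strict preservation of $\ox$ and of $\lambda$, which gives uniqueness automatically.

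The main content is verifying that the relations $\sigma_i\sigma_{j+1}=\sigma_j\sigma_i$ (for $i\le j$, whenever composable) are preserved by $F$, so that $F$ descends to a functor. I would split this into two geometric cases. In the \emph{independent} case, the two distinguished sub-bracketings at positions $i$ and $j+1$ of $S$ are disjoint; then both $F(\sigma_j\sigma_i)$ and $F(\sigma_i\sigma_{j+1})$ arise by tensoring two instances of $\lambda$ (at distinct parts of the outer bracketing) with identities, and they agree by the interchange/middle-four rule, i.e., pure functoriality of $\ox$. In the \emph{nested} case, one of the sub-bracketings contains the other; rewriting the relevant sub-word as $I\ox W$ with $W$ itself containing an instance where $\lambda$ is applied (inducing some $g\colon W\to W'$), the equality reduces to the naturality square $g\circ\lambda_W=\lambda_{W'}\circ(1_I\ox g)$ for $\lambda$. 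A bookkeeping check using the formulas for $\ell_{S\star T}$ and for $\sigma^*$, $\delta$, together with property (ii) of an lbf, confirms that one of these two geometric situations always occurs under the hypothesis $i\le j$ of the relation.

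Finally I would collect the pieces: $F$ is a well-defined functor; it strictly preserves $\ox$ on objects by construction and on morphisms by reducing general morphisms to composites of tensor products of generators (which is exactly how we described them); it strictly preserves $I$; and it strictly preserves $\lambda$ because $\lambda_{(\ord m,u,S)}$ was built as $\sigma_0\colon(1,\varnothing,*)\ox(\ord m,u,S)\to(\ord m,u,S)$ and $F(\sigma_0)=\lambda_{F(\ord m,u,S)}$ by the very definition in the previous paragraph. Uniqueness was built in at each step, since each choice was forced by strict preservation. I expect the bookkeeping in the nested case of the relation to be the main obstacle: one must check that after applying $\sigma_{j+1}$ the new bracketing on the smaller object still exhibits position $i$ as an $I$ bracketed to the right in the correct way for the inductive application of $\lambda$, and conversely that after applying $\sigma_i$ the position $j+1$ shifts down to $j$ with its bracketing pattern preserved; both verifications are unwindings of the formula $\ell_T=\sigma_i\ell_S\sigma_i^*$ together with Proposition~\ref{prop:cob-for-lbf}.
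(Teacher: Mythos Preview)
Your proposal is correct and follows essentially the same approach as the paper: define $F$ on objects as forced, send each generating shrink $\sigma_i$ to the appropriate component of $\lambda$ whiskered by identities (using that $\ell_S(i)=i$ and $i\notin u$ exhibit the relevant subword as $I\otimes W$), and then verify the relations $\sigma_i\sigma_{j+1}=\sigma_j\sigma_i$ by splitting into the disjoint case (functoriality of $\otimes$) and the nested case (naturality of $\lambda$). The paper carries out the same plan, being somewhat more explicit in one place: it constructs the lbf on the sub-interval $[j,r_S(j+1)]$ and checks directly that the restricted shrink morphism is, up to re-indexing, an actual component of $\lambda$, whereas you assert this and defer it to the formula $\ell_T=\sigma_i\ell_S\delta_i$; the paper's case split for the relations is phrased in terms of whether $h=r_S(i+1)+1\le j+1$ or not, which is exactly your disjoint/nested dichotomy.
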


\begin{proof}
Let \C be a skew left unital magmoidal category, 
and $C$ an object of \C. We know that there is 
a unique functor $F\colon\Fpm\to\C$ which strictly
preserves the tensor product of objects, and the unit
object, and which sends the generator 
$X=(1,1,*)$ to $C$. We need to show that it can be made
functorial with respect to shrink morphisms in a
 unique way such that the skew left unital structure
is preserved.

Consider a generating shrink morphism 
$\sigma_j\colon (\ord{m+1},u,S)\to(\ord m,u',S')$. We know that
$\ell_S(j)=j$.

If $j=0$ and $\ell_S(k)>0$ for all $k>0$, then in fact $(\ord{m+1},u,S)=I\ox (\ord m,u',S')$ and $\sigma_j$ is the component at $(\ord m,u',S')$ of $\lambda$. Thus such a generating shrink morphism must be sent to $\lambda\colon I\ox F(\ord m,u',S')\to F(\ord m,u',S')$ in \C.

Suppose otherwise, and consider the interval $[j,r_S(j+1)]$ given by $\{i\mid j\le r_S(j+1)\}$. We shall define an lbf $\ell_T$ on $[j,r_S(j+1)]$ by 
$$\ell_T(i) =
\begin{cases}
\ell_S(i) & \text{if $j\le i<r_S(j+1)$} \\
r_S(j) & \text{if $i=r_S(j+1)$} 
\end{cases}$$
It is clear that $\ell_T(i)\le i\le r_S(j+1)$ for all $i$ in the interval, and that
$\ell_T$ preserves the top element $r_S(j+1)$. We have $\ell_T(j)=\ell_S(j)=j$, while if $j+1\le i<r_S(j+1)$ then $j+1\le \ell_S(i)$ by the formula for $\ell_S$ in Proposition~\ref{bijbf}.  Thus $\ell_S(i)\le j$ for all $i$ with equality only for $i=j$. We must show that 
$\ell_T(i)\le h\le i$ implies $\ell_T(i)\le\ell_T(h)$. The cases $i=j$ and $i=r_S(j+1)$ are trivial, so suppose that $j<i<r_S(j+1)$. Then $\ell_S(i)\le h\le i$ and so $\ell_S(i)\le \ell_S(h)$ and so in turn $\ell_T(i)\le \ell_T(h)$. Thus we do have an lbf $\ell_T$ on $[j,k]$. (This could also be deduced from Proposition~\ref{prop:cob-for-lbf} using a suitable choice of $\xi$.)

Consider the ``fat'' object $([j,r_S(j+1)],v,T)$, where  $v$ is given by $[j,r_S(j+1)]\cap u$. Now $\sigma_j$ restricts to an order-preserving surjection $[j,r_S(j+1)]\to[j,r_S(j+1)-1]$ which defines a shrink morphism $\sigma\colon([j,r_S(j+1)],v,T)\to([j,r_S(j+1)-1],v',T')$, and now $j$ is the bottom element of $[j,r_S(j+1)]$ and $\ell_T(i)=j$ only if $i=j$. Up to an isomorphism re-indexing the elements of $[j,r_S(j+1)]$, then, our shrink morphism $\sigma\colon([j,r_S(j+1)],v,T)\to([j,r_S(j+1)-1],v',T')$ is the component at $([j,r_S(j+1)-1],v',T')$ of $\lambda$, and so should be sent to the corresponding component of the $\lambda$ in \C.

Furthermore, the original generating shrink morphism $\sigma_j\colon(\ord{m+1},u,S)\to(\ord m,u',S')$ is, up to re-indexing, obtained from $\sigma_j\colon([j,r_S(j+1)],v,T)\to([j,r_S(j+1)-1,v',T')$ by tensoring on either side with identity morphisms. We saw above how to define $F$ on $\sigma_j\colon([j,r_S(j+1)],v,T)\to([j,r_S(j+1)-1,v',T')$, and so tensoring the result in \C with suitable identity morphisms gives the required image under $F$ of the original $\sigma_j\colon(\ord{m+1},u,S)\to(\ord{m},u',S')$.

We have now defined $F$ on the generating morphisms;
this definition respects the $\lambda$'s and respects
iterated whiskering of the generating morphisms by 
identities. It remains to show that $F$ respects the 
relations; preservation of composition and of tensoring
will then follow.

Suppose then that we have a commutative square
$$\xymatrix{
(\ord m,u,S) \ar[r]^{\sigma_{j+1}} \ar[d]_{\sigma_i} &
(\ord{m-1},v,T) \ar[d]^{\sigma_i} \\
(\ord{m-1},v',T') \ar[r]_{\sigma_j} & (\ord{m-2},w,U) }$$
in \Fl with $i\le j$. We must show that it is mapped by $F$ to a commutative square in \C.

We know that $\ell_S(i)=i$ and $\ell_S(j+1)=j+1$. Thus there are $S$-triangles $(i,i+1,h)$ and $(j+1,j+2,k)$. If $h\le j+1$, then the square in \C will commute by functoriality of $\ox$.
If $h\not\le j+1$, then since $i+1\le j+1$ we must have $k\le h$. In this case the square in \C will commute by naturality of $\lambda$. \end{proof} 

\section{Combining associativity and left units}
\label{sect:la}


The shrink morphisms of the previous section are in particular surjective, but there are more general surjections which will be in \Fsk.

If $(M,u,S)$ and $(N,v,T)$ are objects of \Fpmfat, we define an {\em \F-surjection} from $(M,u,S)$ to $(M,v,T)$ to be an order-preserving surjection $\sigma\colon M\to N$ which defines a shrink morphism $(M,u,L)\to(N,v,T)$ for some $L\in\Tam_m$ with $S\le L$.

Every \F-surjection can be factorized as
$$\xymatrix{
(M,u,S) \ar[r]^{1_M} & (M,u,L) \ar[r]^{\sigma} & 
(N,v,T) }$$
but these factorizations need not be unique, since
there can be several possible choices of $L$. There is, however, a canonical choice, namely the maximal one, which exists by the following result.

\begin{proposition}
  Suppose that $\sigma\colon(M,u,S)\to(N,v,T)$ is an \F-surjection. Then the set of all $L_\gamma\in\Tam_M$ with $S\le L_\gamma$ and with $\sigma\colon(M,u,L_\gamma)\to(N,v,T)$ an \F-surjection has a greatest element.
\end{proposition}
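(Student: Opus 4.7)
The plan is to show that the set of admissible $L_\gamma$ is closed under finite (hence all, by finiteness of $\Tam_M$) joins in $\Tam_M$, so that its maximal element is simply the pointwise join of the whole family. Since $\sigma$ is already assumed to be an \F-surjection, the family is non-empty, so this will suffice.

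First I would fix the family $(L_\gamma)_{\gamma\in\Gamma}$ of all lbfs with $S\le L_\gamma$ such that $\sigma\colon(M,u,L_\gamma)\to(N,v,T)$ is a shrink morphism, and set $L=\bigvee_\gamma L_\gamma$, with $\ell_L$ computed pointwise by the Huang--Tamari Proposition. That proposition already guarantees $L\in\Tam_M$, and clearly $S\le L$. It then remains to check that $\sigma\colon(M,u,L)\to(N,v,T)$ satisfies the three conditions (a), (b), (c) defining a shrink morphism.

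Condition (a) depends only on $\sigma$, $u$ and $v$, so it is inherited at once from any particular $L_\gamma$. For condition (b), assume $\sigma(j)=\sigma(j+1)$. Each $\ell_{L_\gamma}(j)$ satisfies $\sigma\ell_{L_\gamma}(j)=\sigma(j)$, and so lies in the $\sigma$-fibre of $\sigma(j)$; since $\sigma$ is order-preserving and surjective this fibre is a contiguous interval, so the maximum $\ell_L(j)=\max_\gamma\ell_{L_\gamma}(j)$ is again one of the $\ell_{L_\gamma}(j)$ and lies in the same fibre, giving $\sigma\ell_L(j)=\sigma(j)$. For condition (c), fix $k\in N$; then
\[
\sigma\ell_L\sigma^*(k) \;=\; \sigma\Bigl(\max_\gamma \ell_{L_\gamma}\sigma^*(k)\Bigr) \;=\; \max_\gamma \sigma\ell_{L_\gamma}\sigma^*(k) \;=\; \max_\gamma \ell_T(k) \;=\; \ell_T(k),
\]
where the second equality uses that $\sigma$ is order-preserving and the third uses that each $L_\gamma$ is admissible.

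The step I expect to require the most care is condition (b), since one has to recognise that $\ell_L(j)$ is an actual value $\ell_{L_{\gamma_0}}(j)$ attained by some member of the family (because the max of finitely many elements of a totally ordered set is one of them), and combine this with the fibre-is-an-interval observation; the other two conditions are essentially formal. Once all three conditions are verified, $L$ is an admissible bracketing above every $L_\gamma$, hence the greatest element of the family.
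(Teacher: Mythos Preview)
Your proof is correct and follows essentially the same approach as the paper: form the pointwise join $L$ of the admissible family and verify conditions (a), (b), (c) directly. The paper's argument for (b) is marginally slicker---it simply uses that $\sigma$, being a left adjoint, preserves joins, so $\sigma\ell_L(j)=\sigma\bigl(\vee_\gamma\ell_{L_\gamma}(j)\bigr)=\vee_\gamma\sigma\ell_{L_\gamma}(j)=\sigma(j)$---and this makes your fibre-is-an-interval observation unnecessary (once you know the max is attained by some $\gamma_0$, you already have $\sigma\ell_L(j)=\sigma\ell_{L_{\gamma_0}}(j)=\sigma(j)$ without needing anything about the shape of the fibre).
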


\begin{proof} 
It will suffice to show that the join $L$ of the $L_\gamma$ lies in the set. Since joins of lbf's are constructed pointwise, we have $\ell_L(j) = \vee_\gamma \ell_{L_\gamma}(j)$. We shall write $\ell_\gamma$ for $\ell_{L_\gamma}$ and write $\ell$ for $\ell_L$.

First of all, if $\sigma(j)=\sigma(j+1)$ then for each $\gamma$ we have $\sigma\ell_\gamma(j)=\sigma(j)$. But now $\sigma\ell(j)=\sigma(\vee_\gamma \ell_\gamma(j))=\vee_\gamma \sigma\ell_\gamma(j)=\vee_\gamma\sigma(j)=\sigma(j)$, where we have used the fact that $\sigma$ preserves joins.

Secondly, $\sigma\ell\sigma^*(j)=\sigma(\vee_\gamma \ell_\gamma\sigma^*(j))=\vee_\gamma \sigma\ell_\gamma\sigma^*(j)=\vee_\gamma \ell_T(j)=\ell_T(j)$ and so $\sigma\ell\sigma^*=\ell_T$.

Since $u$ is unchanged for the different $\gamma$, this completes the proof. \end{proof}

Thus \F-surjections are generated by shrink morphisms and by morphisms of the form $(M,u,S)\to(M,u,S')$ with $S\le S'$; we shall call morphisms of the latter type {\em Tamari morphisms}.  We know that shrink morphisms can be composed, and that Tamari morphisms can be composed, but in order to have a category of \F-surjections we also need to know that the composite of an \F-surjection followed by a Tamari morphism is still an \F-surjection. We shall now turn to this.

First recall from Section~\ref{sect:orientals} that if $S$ contains triangles $(x_0x_1x_2)$ and $(x_0x_2x_3)$ then $x=(x_0x_1x_2x_3)$ defines an inequality $S\le T$, where $T$ is obtained from $S$ by replacing the two triangles $t_S(x_1)$ and $t_S(x_2)$ given above by $(x_0x_1x_3)$ and $(x_1x_2x_3)$. Furthermore, inequalities of this type generate the poset $\Tam$.

We can re-express this in terms of bracketings. To give triangles $(x_0x_1x_2)$ and $(x_0x_2x_3)$ as above is to give $c=x_1-1$ and $d=x_2-1$ with $\ell_S(c)=\ell_S(d)$, and with $r_S(c+1)=d$. Then the only difference between $\ell_S$ and $\ell_T$ is that $\ell_T(d)=c+1$. Similarly, the only difference between $r_S$ and $r_T$ is that $r_T(c+1)=r_S(d+1)$.
We will sometimes write $a_{c,d}$ for the inequality $S\le T$.

\begin{proposition}\label{prop:US-rewrite}
  If $\sigma\colon(M,u,S)\to(N,v,T)$ is a shrink morphism, and $T\le T'$ in $\Tam_n$, then there is
a shrink morphism $\sigma\colon(M,u,S')\to(N,v,T')$ for some $S'\in\Tam_m$ with $S\le S'$.
\end{proposition}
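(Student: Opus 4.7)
The plan is to prove the result first for a single elementary Tamari move $T' = a_{c,d}(T)$, as described in the paragraph preceding the proposition, and then to induct on the length of a covering chain $T = T_0 < T_1 < \cdots < T_k = T'$ in $\Tam_n$. So assume $T' = a_{c,d}(T)$, so that $\ell_T(c) = \ell_T(d) = b$, $r_T(c+1) = d$, and the only change from $\ell_T$ to $\ell_{T'}$ is that $\ell_{T'}(d) = c+1$.

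Upstairs, set $D = \sigma^*(d)$ and $C = \sigma^!(c+1)$. Since $c+1 \le d$ we have $C \le \sigma^!(d) \le D$, and since $d$ is interior to $N$ (as $\ell_T$ preserves the top), $D$ is interior to $M$. I would then define a candidate function $\ell_{S'}$ by $\ell_{S'}(D) = C$ and $\ell_{S'}(j) = \ell_S(j)$ for $j \ne D$, and let $S'$ be the corresponding triangulation.

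The crux is to verify that $\ell_{S'}$ really is an lbf. Properties (i) and (iii) are immediate. For (ii), the delicate case is $j = D$: for $C \le i < D$ we need $\ell_S(i) \ge C$; by adjointness we have $c+1 \le \sigma(i) \le d$. If $\sigma(i) = d$, then $i < D = \sigma^*(d)$ forces $\sigma(i) = \sigma(i+1)$, so the shrink-morphism condition~(b) for $\sigma \colon S \to T$ yields $\sigma\ell_S(i) = d$ and hence $\ell_S(i) \ge \sigma^!(d) \ge C$. If instead $\sigma(i) \in [c+1, d-1]$, then $r_T(c+1) = d$ combined with lbf~(ii) for $\ell_T$ gives $\ell_T(\sigma(i)) \ge c+1$, whence $\ell_S(i) \ge \sigma^!(c+1) = C$. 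The other case of (ii) to check is $j \ne D$ with $i = D$ in the conclusion: here $D \le j$ forces $j > D$, so $\sigma(j) > d$ and $\sigma\ell_S(j) \le d$, and lbf~(ii) for $\ell_T$ then forces $\sigma\ell_S(j) \le \ell_T(d) = b$, so $\ell_S(j) \le \sigma^*(b) < C$ as required.

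With $\ell_{S'}$ confirmed as an lbf, the rest is routine. The inequality $S \le S'$ follows from $\ell_S(D) < C = \ell_{S'}(D)$, which in turn follows from $\sigma\ell_S(D) = b < c+1 = \sigma(C)$. Shrink condition (a) for $\sigma\colon (M,u,S') \to (N,v,T')$ is unaffected; (b) holds because $\sigma(k) = \sigma(k+1)$ forces $k \ne D$ (as $\sigma(D+1) > d$), so $\ell_{S'}(k) = \ell_S(k)$ and condition~(b) for $S$ applies; and (c) is an equality at $k = d$ by construction and a copy of $\sigma\ell_S\sigma^* = \ell_T$ elsewhere. The main obstacle is thus the (ii) verification for $\ell_{S'}$ at $j = D$, the unique place where the elementary-move hypothesis $r_T(c+1) = d$ must be combined with condition~(b) of the shrink morphism $\sigma$.
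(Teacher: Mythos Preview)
Your argument is correct and takes a genuinely different route from the paper's proof.

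The paper first reduces to the case of a generating shrink morphism $\sigma_j$ and a generating Tamari move $a_{c,d}$, and then splits into two cases according to whether $\ell_S\delta_j(c)=\ell_S\delta_j(d)$; in the second case two elementary Tamari moves are needed to build $S'$.  You instead work with an arbitrary surjection $\sigma$ and define $\ell_{S'}$ in one stroke by changing the single value $\ell_S(D)$ to $C=\sigma^!(c+1)$ with $D=\sigma^*(d)$.  This is more uniform: there is no case analysis, and for $\sigma=\sigma_j$ your $S'$ agrees with the paper's in both of its cases (in Case~2 your $D=d+1$, $C=c+2$, matching the composite of the paper's two moves).  What the paper's approach buys is an explicit decomposition of $S\le S'$ into generating moves, which is exactly what is needed afterwards to write down the rewrite rules \eqref{eq:US-relation1} and \eqref{eq:US-relation2} for the presentation of \Fla.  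Your proof establishes the proposition cleanly but does not by itself produce those rewrite rules.

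Two small glosses in your (ii) verification deserve to be made explicit.  In the subcase $C\le i<D$ with $c+1\le\sigma(i)\le d-1$, you pass from $\ell_T(\sigma(i))\ge c+1$ to $\ell_S(i)\ge C$; this requires $\sigma\ell_S(i)\ge c+1$, which follows because either $i=\sigma^*\sigma(i)$ (so $\sigma\ell_S(i)=\ell_T(\sigma(i))$) or $i<\sigma^*\sigma(i)$ (so condition~(b) gives $\sigma\ell_S(i)=\sigma(i)\ge c+1$).  Similarly, in the case $j>D$ with $\ell_S(j)\le D$, you invoke lbf~(ii) for $\ell_T$ on $\sigma\ell_S(j)$; this needs $\sigma\ell_S(j)=\ell_T(\sigma(j))$, i.e.\ $j=\sigma^*\sigma(j)$, and indeed the alternative $j<\sigma^*\sigma(j)$ would give $\sigma\ell_S(j)=\sigma(j)>d$, contradicting $\sigma\ell_S(j)\le d$.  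With these two remarks the argument is complete.
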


\begin{proof}
It will suffice to consider the case of a generating shrink morphism
$\sigma_j\colon(\ord m,u,S)\to(\ord{m-1},v,T)$ and a 
generating inequality $a_{c,d}\colon T\to T'$ in $\Tam_{m-1}$.

We know then that $\ell_S(j)=j$, and we also know that $c<d$, that $\ell_T(c)=\ell_T(d)$, and  that $d$ is minimal with the property that $\ell_T(d)\le c<d$. Furthermore, $\ell_{T'}(d)=c+1$ while $\ell_{T'}(k)=\ell_T(k)$ for all $k\neq d$.

We also know that $\ell_T=\sigma\ell_S\sigma^*=\sigma_j\ell_S\delta_j$. Thus $\sigma_j\ell_S\delta_j(c)=\sigma_j\ell_S\delta_j(d)$. Since $\delta_j$ is injective, we certainly have $\delta_j(c)<\delta_j(d)$.

\subsubsection*{Case 1: $\ell_S\delta_j(c)=\ell_S\delta_j(d)$.}

Since $c<d$ we have $\delta_j(c)<\delta_j(d)$, and now $\ell_S\delta_j(d)=\ell_S\delta_j(c)\le\delta_j(c)<\delta_j(d)$, and so $\ell_S\delta_j(d)<\delta_j(c)+1\le\delta_j(d)$. Thus $r_S(\delta_j(c)+1)\le \delta_j(d)$.

Suppose that the inequality is strict, so that $r_S(\delta_j(c)+1)<\delta_j(d)$. Then there exists an $h<\delta_j(d)$ with $\ell_S(h)<\delta_j(c)+1\le h$; also $\ell_S(h)\le\delta_j(c)$. Now $\ell_S(h)<h$ but $\ell_S(j)=j$, so $h\neq j$; but then $h=\delta_j(k)$ for some $k$, and $\ell_T(k)=\sigma_j\ell_S\delta_j(k)=\sigma_j\ell_S(h)\le\sigma_j\delta_j(c)=c<k$. Thus $\ell_T(k)<c+1\le k$, and so $r_{T}(c+1)\le k$; that is, $d\le k$. But then $\delta_j(d)\le\delta_j(k)=h$, contradicting the fact that $h<\delta_j(d)$.

Thus in fact $r_S(\delta_j(c)+1)=\delta_j(d)$. Then we have an inequality $a_{\delta_j(c),\delta_j(d)}\colon S\to S'$, where $\ell_{S'}$ agrees with $\ell_S$ except that $\ell_{S'}\delta_j(d)=\delta_j(c)+1$.

Let's show that $\sigma_j\colon (M,u,S')\to(N,v,T')$ is a shrink morphism. Since $u$ and $v$ are unchanged we need not worry about them. Since $j$ is certainly not $\delta_j(d)$, we have $\ell_{S'}(j)=\ell_S(j)=j$. Thus $\sigma_j$ does define a shrink morphism with domain $(M,u,S')$; it remains to show that the codomain has triangulation $T'$, or in other words that $\ell_{T'}=\sigma_j\ell_{S'}\delta_j$.

To do this, observe that 
$\sigma_j \ell_{S'} \delta_j (d) = \sigma_j(\delta_j(c)+1)=c+1=\ell_{T'}(d)$ while if $k\neq d$ then $\delta_j(k)\neq\delta_j(d)$ and  
$\sigma_j\ell_{S'}\delta_j(k)=\sigma_j\ell_S\delta_j(k)=\ell_T(k)=\ell'_T(k)$, and so $\sigma_j \ell_{S'}\delta_j=\ell'_T$ as required.

\subsubsection*{Case 2: $\ell_S\delta_j(c)\neq\ell_S\delta_j(d)$.} 
Since $\sigma_j\ell_S\delta_j(c)=\sigma_j\ell_S\delta_j(d)$ this can only occur if one of $\ell_S\delta_j(c)$ and $\ell_S\delta_j(d)$ is $j$ and the other is $j+1$. In any case, both $\ell_S\delta_j(c)$ and $\ell_S\delta_j(d)$ are greater than or equal to $j$, hence so too are $\delta_j(c)$ and $\delta_j(d)$, hence so too are $c$ and $d$. Now $c\ge j$, so that $\delta_j(c)\ge j+1$. Thus 
$\ell_S\delta_j(d)\le j+1\le\delta_j(c)\le\delta_j(d)$, and  so $\ell_S\delta_j(d)\le\ell_S\delta_j(c)$, so the only possibility is that $\ell_S\delta_j(d)=j$ and $\ell_S\delta_j(c)=j+1$.

We therefore know that $j\le c<d$. Now $d=r_T(c+1)=\sigma_j r_S \delta_{j+1}(c+1)=\sigma_j r_S(c+2)=r_S(c+2)-1$, where the last step uses the fact that $r_S(c+2)\ge c+2>j$. Thus $r_S(c+2)=d+1$.  Furthermore $\ell_Sr_S(c+2)=\ell_S(d+1)=j$, while $\ell_S(c+1)=j+1$, thus $\ell_Sr_S(c+2)\neq \ell_S(c+1)$; it follows by Proposition~\ref{fixbf}(v) that $r_S(c+2)=r_S\ell_S(c+1)=r_S(j+1)$ and so that $r_S(j+1)=d+1$.

At this point we have the following information.
\begin{itemize}
\item $j\le c<d$
\item $\ell_S(d+1)=\ell_S(j)=j$
\item $\ell_S(c+1)=j+1$
\item $r_S(j+1)=r_S(c+2)=d+1$.
\end{itemize}
Thus there is a generating inequality $a_{j,d+1}\colon S\to S''$, where $\ell_{S''}$ agrees with $\ell_S$ except that $\ell_{S''}(d+1)=j+1$.

Now $c+1<d+1$ and $\ell_{S''}(c+1)=\ell_S(c+1)=j+1=\ell_{S''}(d+1)$. Furthermore $r_{S''}(c+2)=r_{S}(c+2)=d+1$, and so we have a generating inequality $a_{c+1,d+1}\colon S''\to S'$. Here $\ell_{S'}$ agrees with $\ell_{S''}$, and so with $\ell_S$, except that $\ell_{S'}(d+1)=c+2$.

Clearly $S\le S''\le S'$, and we have an object $(M,u,S'')$ in \Flfat. Since $u$ is unchanged, and $\ell_{S''}(j)=\ell_S(j)=j$, there is a shrink morphism $\sigma_j$ with domain $(M,u,S'')$. It remains to show that its codomain has triangulation $T'$; in other words that $\sigma_j\ell_{S''}\delta_j = \ell_{T'}$.

If $k\neq d$, then $\delta_j(k)\neq d+1$, and now
$\sigma_j\ell_{S''}\delta_j(k)=\sigma_j\ell_S\delta_j(k)=\ell_T(k)=\ell_{T'}(k)$. On the other hand,
$\sigma_j\ell_{S''}\delta_j(d)=\sigma_j\ell_{S''}(d+1)=\sigma_j(c+2)=c+1=\ell_{T'}(d)$ as required.
\end{proof}

It follows from the proposition that a composite of \F-surjections is an \F-surjection, and so that we have a category \Flafat of \F-surjections. We write \Fla for the full subcategory consisting of objects $(M,u,S)$ for which $M$ is an ordinal $\ord m$, and write $\Ula$ for the evident faithful functor $\Fla\to\dbot$.

We shall now describe a presentation for the category, before turning to its universal property. First we need the following result.

\begin{proposition}\label{prop:surjection-reduction}
Suppose that $\sigma_j$ is a generating shrink morphism
$(\ord m,u,S)\to(\ord{m-1},v,T)$ and also $(\ord m,u',S')\to(\ord{m-1},v,T)$. Then $u=u'$ and 
$S$ and $S'$ are comparable in $\Tam_{m}$.
\end{proposition}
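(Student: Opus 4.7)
The plan is direct. A generating shrink morphism $\sigma_j\colon(\ord m,u,S)\to(\ord{m-1},v,T)$ is characterized by Lemma~\ref{lemma:sigma-existence} via the two local conditions $j\notin u$ and $\ell_S(j)=j$, the codomain data being forced as $v=\sigma_j u$ and $\ell_T=\sigma_j\ell_S\delta_j$. Crucially, condition~(a) in the definition of shrink morphism says $u=\sigma_j^{*}v=\delta_j v$, a formula in $v$ alone (since $\sigma_j^{*}=\delta_j$). Applied to each of $(\ord m,u,S)$ and $(\ord m,u',S')$, this immediately yields $u=u'=\delta_j v$.

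For comparability of $S$ and $S'$ in $\Tam_m$, the key observation is that from $\sigma_j\ell_S\delta_j=\ell_T=\sigma_j\ell_{S'}\delta_j$ together with $\sigma_j\ell_S(j)=\sigma_j(j)=j=\sigma_j\ell_{S'}(j)$ one obtains the stronger equality $\sigma_j\ell_S=\sigma_j\ell_{S'}$ of functions $\ord m\to\ord{m-1}$ (the hypotheses cover $\mathrm{im}\,\delta_j=\ord m\smallsetminus\{j\}$ and the point $j$ respectively). Since the unique non-singleton fibre of $\sigma_j$ is $\sigma_j^{-1}(j)=\{j,j+1\}$, at every $k\in\ord m$ we have either $\ell_S(k)=\ell_{S'}(k)$, or $\{\ell_S(k),\ell_{S'}(k)\}=\{j,j+1\}$. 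Hence the question reduces to showing that one of the sets
\[
A=\{k:\ell_S(k)=j+1,\ \ell_{S'}(k)=j\},\qquad B=\{k:\ell_S(k)=j,\ \ell_{S'}(k)=j+1\}
\]
is empty: if $B=\varnothing$ then $\ell_{S'}\le\ell_S$ pointwise and $S'\le S$, while $A=\varnothing$ yields $S\le S'$.

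Suppose for contradiction that both $A$ and $B$ are nonempty, and pick $a\in A$ and $b\in B$. Since $\ell_S(a)=\ell_{S'}(b)=j+1$ and $\ell(k)\le k$ for any lbf, we have $a,b\ge j+1$, and $a\neq b$ because $A\cap B=\varnothing$. Assume without loss of generality that $a<b$; the opposite case is symmetric under interchange of $S$ and $S'$. Applying lbf axiom~(ii) to $\ell_{S'}$ at $b$: the chain $\ell_{S'}(b)=j+1\le a\le b$ forces $\ell_{S'}(a)\ge\ell_{S'}(b)=j+1$, contradicting $\ell_{S'}(a)=j$.

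The main obstacle lies essentially only in spotting that axiom~(ii) of an lbf is precisely the monotonicity statement which rules out any ``interleaving'' of the two possible lifts $j$ and $j+1$ of the shared value in $\ord{m-1}$; once this is noticed, the argument needs no deeper combinatorics of the Tamari lattice or of $\Fla$, and the whole statement drops out.
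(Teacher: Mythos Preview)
Your proof is correct and follows essentially the same approach as the paper: both arguments reduce to the observation that $\sigma_j\ell_S=\sigma_j\ell_{S'}$ forces any discrepancy between $\ell_S$ and $\ell_{S'}$ to lie in $\{j,j+1\}$, and then use lbf axiom~(ii) to show all such discrepancies must go in the same direction. Your packaging via the sets $A$ and $B$ and the contradiction argument is slightly cleaner than the paper's case split on $k<j$, $k=j$, $k>j$ followed by a minimality argument, but the crucial step is identical.
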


\begin{proof}
The fact that $u=u'$ is immediate from the fact that
$\sigma_j$ induces a bijection with $v$  of each of $u$
and $u'$.


The fact that we have shrink morphisms means
that $\ell_S(j)=j=\ell_{S'}(j)$.

If $k<j$ then $\sigma_j \ell_S(k)=\sigma_j \ell_S \delta_j(k)=\ell_T(k)=\sigma_j \ell_{S'}\delta_j(k)=\sigma_j \ell_{S'}(k)$; while $\sigma_j\ell_S(k)\le\ell_S(k)\le k<j$ and so in fact $\ell_S(k)=\ell_{S'}(k)$.

If $k>j$ then $\sigma_j \ell_S(k) =\sigma_j \ell_S\delta_j(k-1)=\ell_T(k-1)=\sigma_j\ell_{S'}\delta_j(k-1)=\sigma_j\ell_{S'}(k)$. It follows that $\ell_S(k)$ and $\ell_{S'}(k)$ can differ by at most one, and only if $\ell_T(k-1)=j$.

If $S\neq S'$ then there is a least $k$ with $\ell_S(k)\neq\ell_{S'}(k)$; without loss of generality we may suppose that $\ell_S(k)<\ell_{S'}(k)$. By the previous paragraph, we know that $\ell_S(k)=j$ and $\ell_{S'}(k)=j+1$. For any $h\neq k$ with $\ell_S(h)\neq\ell_{S'}(h)$ we must have $h>k$ and once again one of $\ell_S(h)$ and $\ell_{S'}(h)$ is $j$ and the other is $j+1$. But now $\ell_S(h)\le j+1\le k\le h$ and so $\ell_S(h)\le\ell_S(k)=j$; thus $\ell_S(h)=j$ and $\ell_{S'}(h)=j+1$. This proves that for all $h$ with $\ell_S(h)\neq\ell_{S'}(h)$ we have $\ell_S(h)<\ell_{S'}(h)$, and so $S\le S'$. 
\end{proof}

As well as the previous rewrite rules for shrink morphisms, we now add two further rewrite rules. 

First, in the context of Proposition~\ref{prop:US-rewrite}, we have the rule $a_{c,d}\sigma_j\to \sigma_j a_{\delta_j(c),\delta_j(d)}$. 
When does this actually apply? Write $c'=\delta_j(c)$ and $d'=\delta_j(d)$. We need $c'<d'$, $\ell_S(c')=\ell_S(d')$, and $r_S(c'+1)=d'$ in order to ensure that $a_{\delta_j(c),\delta_j(d)}$ exists, and of course we also need $\ell_S(j)=j$. It then follows that $\ell_T(c)=\sigma_j\ell_S\delta_j(c)=\sigma_j\ell_S(c')=\sigma_j\ell_S(d')=\sigma_j\ell_S\delta_j(d)=\ell_T(d)$. By Proposition~\ref{prop:cob-lr} we have $r_T(c+1)=\sigma_j r_S\delta_{j+1}(c+1)=\sigma_j r_S(\delta_j(c)+1)=\sigma_j r_S(c'+1)=\sigma_j(d')=d$. We also need $c<d$, or in other words $\sigma_j(c')<\sigma_j(d')$; of course $\sigma_j(c')\le \sigma_j(d')$ follows from $c'<d'$, so we only need to check that $\sigma_j(c')\neq \sigma_j(d')$. 
(The only way that we could have $c'<d'$ but $\sigma_j(c')=\sigma_j(d')$ is if $c'=j$ and $d'=j+1$.)

We have now expressed everything in terms of $c'$ and $d'$, so we may as well write $c$ and $d$ for these (then the old $c$ and $d$ will be given by $\sigma_j c$ and $\sigma_j d$.) Thus whenever $\ell_S(j)=j$, $\ell_S(c)=\ell_S(d)$, $r_S(c+1)=d$, but not both $c=j$ and $d=j+1$, we have a diagram 
\begin{equation}\label{eq:US-relation1}
\xymatrix{
(\ord m,u,S) \ar[r]^-{\sigma_j} \ar[d]_{a_{c,d}} & 
(\ord{m-1},v,T) \ar[d]^{a_{\sigma_j c,\sigma_j d}}  \\
(\ord m,u,S') \ar[r]_-{\sigma_j} & (\ord{m-1},v,T') }
\end{equation}
of \F-surjections, and we introduce the rewrite rule 
$$a_{\sigma_j c,\sigma_j d}\,\sigma_j\to \sigma_j \,a_{c,d}.$$

The second rule involves Proposition~\ref{prop:surjection-reduction}, and in particular the situation where $a_{c,d}\colon S\le S'$ is a generating inequality in $\Tam_m$, and $\sigma_j$ is 
a shrink morphism both from $(\ord m,u,S)\to(\ord{m-1},v,T)$ and from $(\ord m,u,S')\to(\ord{m-1},v,T)$. When will this arise? We need $c<d$ with $\ell_S(j)=j=\ell_{S'}(j)$, $\ell_S(c)=\ell_S(d)$, and $r_S(c+1)=d$. Since $\ell_S(d)\neq\ell_{S'}(d)$ but $\ell_S(j)=\ell_{S'}(j)$ we cannot have $d=j$. Thus $d=\delta_j(k)$ for some (unique) $k$. We still need to encode the condition that $\sigma_j\ell_S\delta_j=\sigma_j\ell_{S'}\delta_j$. This will certainly be true for all values $h$ with $\delta_j(h)\neq d$; in other words for $h\neq k$. So the condition reduces to $\sigma_j\ell_S\delta_j(k)=\sigma_j\ell_{S'}\delta_j(k)$, or in other words $\sigma_j\ell_S(d)=\sigma_j\ell_{S'}(d)$. Now $\ell_{S'}(d)=c+1$, while $\ell_S(d)=\ell_S(c)\le c<c+1$, so the only possibility is that $\ell_S(d)=j$ and $c+1=j+1$; in other words, $\ell_S(d)=c=j$.

Summarizing, if $j<d$, $\ell_S(d)=\ell_S(j)=j$ and $r_S(j+1)=d$, then we have a diagram of \F-surjections
\begin{equation}\label{eq:US-relation2}
\xymatrix{
(\ord m,u,S) \ar[dr]^{\sigma_j} \ar[d]_{a_{j,d}} \\
(\ord m,u,S') \ar[r]_-{\sigma_j} & (\ord{m-1},v,T) }
\end{equation}
and we impose the rewrite rule $\sigma_j\to\sigma_j a_{j,d}$.

One might expect to see another rule, covering Case~2 of Proposition~\ref{prop:US-rewrite}, but this turns out not to be necessary. In Case~2, we have \F-surjections as in the solid part of the diagram
$$\xymatrix{
(\ord m,u,S) \ar[d]_{a_{j,d+1}} \ar[r]^-{\sigma_j} & (\ord{m-1},v,T) \ar[dd]^{a_{c,d}} \\
(\ord m,u,S'') \ar[d]_{a_{c+1,d+1}} \ar@{.>}[ur]_{\sigma_j} \\
(\ord m,u,S') \ar[r]_-{\sigma_j} & (\ord{m-1},v,T') }$$
but we can decompose it, using the dotted line, into one instance of each rule. 

\begin{proposition}
The category \Fla of \F-surjections is generated by the generating shrink morphisms along with the generating Tamari morphisms $a_{c,d}\colon (\ord m,u,S)\to(\ord m,u,S')$.  It can be presented by these generators along with the relations for shrink morphisms, the relations that hold in the Tamari lattices $\Tam_m$, and the relations in \eqref{eq:US-relation1} and \eqref{eq:US-relation2}.  
\end{proposition}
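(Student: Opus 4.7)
The plan is to compare $\Fla$ with the category $\mathcal P$ presented by the stated generators and relations, via the evident identity-on-objects functor $P\colon\mathcal P\to\Fla$, and show $P$ is bijective on each hom-set. Surjectivity follows from the canonical (maximal-$L$) Tamari-shrink factorization of any $\F$-surjection established earlier in this section: the shrink factor is a composite of generators $\sigma_j$ by Proposition~\ref{prop:CS-presentation}, and the Tamari factor is a composite of generators $a_{c,d}$ along any saturated chain of covering relations in $\Tam_m$ from $S$ to $L$.

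For faithfulness, I introduce a normal form in $\mathcal P$: a composite $\sigma_{j_1}\circ\cdots\circ\sigma_{j_r}\circ\overline{a}$, where $\overline{a}$ is a composite of $a_{c,d}$'s taking $S$ to the \emph{maximal} admissible intermediate triangulation $L$, and $j_1\le\cdots\le j_r$. The Tamari-lattice relations make $\overline{a}$ unique in $\mathcal P$ as a morphism $S\to L$, while the shrink-morphism relations of Proposition~\ref{prop:CS-presentation} make the sequence $(j_1,\ldots,j_r)$ unique given $(\ord m,u,L)$ and the underlying order-preserving surjection. Since the canonical factorization in $\Fla$ determines the maximal $L$ uniquely, the induced map from normal forms in $\mathcal P$ to morphisms in $\Fla$ is injective. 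Hence faithfulness of $P$ will follow once every morphism of $\mathcal P$ is shown to equal a normal-form one under the relations.

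The heart of the argument is rewriting each pattern $a_{c,d}\circ\sigma_j$ (a Tamari generator applied after a shrink) in the form $\sigma_j\circ a'$. In Case~1 of Proposition~\ref{prop:US-rewrite} this is realized directly by relation \eqref{eq:US-relation1}. In Case~2 I use the intermediate triangulation $S''$ produced in that proof: first apply \eqref{eq:US-relation2} in reverse (using $\sigma_j=\sigma_j\circ a_{j,d+1}$) to insert $a_{j,d+1}$ between $\sigma_j$ and $a_{c,d}$, and then apply \eqref{eq:US-relation1} with parameters $(c+1,d+1)$ to move $a_{c,d}$ across. Iterating such moves pushes all Tamari generators to the right of all shrink generators; the shrink part is then placed in increasing form by the shrink-morphism relations, and the Tamari part collapses to a single morphism by the Tamari-lattice relations. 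Further applications of \eqref{eq:US-relation2} enlarge the intermediate triangulation until it is maximal. The main obstacle is the case analysis for Case~2, together with verifying that every non-maximal intermediate triangulation admits an enlargement via a covering inequality absorbable through \eqref{eq:US-relation2}.
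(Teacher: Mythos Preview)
Your overall strategy is the right one and matches what the paper leaves implicit: the proposition is stated without proof, as a summary of the preceding rewrite-rule analysis, and your plan of comparing $\mathcal P$ with $\Fla$ via a normal-form argument (Tamari to the maximal admissible $L$, then shrink in increasing-index form) is exactly how one makes that analysis into an honest proof.

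Two points, however, need more care than your sketch gives them.

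\emph{Termination of the rewriting.} You assert that iterating the moves ``pushes all Tamari generators to the right of all shrink generators'', but in Case~2 a single Tamari generator on the left of a $\sigma_j$ becomes two Tamari generators on its right. If there are further shrink generators to the right of that $\sigma_j$, each of these two must still be pushed past them, so a naive count of out-of-order pairs can increase. You need a well-founded measure that decreases under both Case~1 and Case~2 rewrites; the natural candidates involve the Tamari height of the intermediate triangulations (which is bounded since $\Tam_m$ is finite), but you should say explicitly what the measure is and why it drops.

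\emph{Enlarging to the maximal $L$.} Relation \eqref{eq:US-relation2} only applies at a single generating shrink $\sigma_j$, and only when there exists $d>j$ with $\ell_L(d)=\ell_L(j)=j$ and $r_L(j+1)=d$. For a composite shrink $\sigma$ you must first factor it as $\sigma'\sigma_j$; by Proposition~\ref{prop:CS-factorization} the only available first factor has $j$ minimal in $\sigma^\ell$, and the shrink relations of Proposition~\ref{prop:CS-rewrite} do not let you change this (the example after that proposition shows the reverse rewrite can fail because the required generator need not exist in $\mathcal P$). So you must argue that whenever $L$ is not maximal among admissible intermediates, an enlargement of the required shape is available \emph{at this particular} $j$, or else explain how to manufacture one by first pushing a Tamari generator forward through part of the shrink and then pulling it back via \eqref{eq:US-relation1}. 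Proposition~\ref{prop:surjection-reduction} tells you that the fibre of a single $\sigma_j$ over a fixed codomain is a chain whose covering relations are precisely of the form $a_{j,d}$, which is the key ingredient; combining this with an induction on the length $m-n$ of $\sigma$ (as in the proof of Proposition~\ref{prop:UG-presentation}) will close the gap, but you should spell it out.
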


We shall use the name {\em left unital skew monoidal category} for a skew semimonoidal category $(\C,\ox,\alpha)$
equipped with an object $I$, and a natural 
transformation $\lambda$ from $I\ox-$ to the identity endofunctor of \C, 
satisfying the coherence condition
$$\xymatrix{
(I\ox A)\ox B \ar[r]^{\alpha} \ar[dr]_{\lambda\ox1} & 
I\ox(A\ox B) \ar[d]^{\lambda} \\
& A\ox B. }$$

Since the tensor product of shrink morphisms is a shrink morphism, and the tensor product of 
morphisms in $\Tam_n$ is a morphism in $\Tam_n$, it follows that the tensor product of \F-surjections is an \F-surjection, and so that the tensor product functor on $\Delta_\bot$ lifts strictly through \Ula to \Fla.

We have maps $\lambda$ coming from the shrink morphisms, and $\alpha$ coming from the Tamari
posets; the fact that they are each natural with 
respect to all \F-surjections and that they
are mutually compatible follows from the faithfulness of \Ula along with the corresponding facts about $\Delta_\bot$. Thus \Fla is a left unital skew monoidal category.

\begin{theorem}\label{thm:US-univ-prop}
\Fla is the free left unital skew monoidal category 
generated by a single object. 
\end{theorem}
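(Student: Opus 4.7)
The plan is to exploit the presentation of \Fla given in the proposition just preceding the theorem. Fix a left unital skew monoidal category \C and an object $C$ of \C. Strict preservation of the unit, the tensor product and the generator forces a unique definition of $F$ on objects: $F(\ord m, u, S)$ is the iterated tensor product of $m$ factors, where the $i$-th factor is $C$ if $i\in u$ and $I$ otherwise, bracketed according to $S$.

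Next I would define $F$ on generating morphisms. A Tamari morphism $a_{c,d}\colon(\ord m,u,S)\to(\ord m,u,S')$ is sent to the iterated tensor product of identities with a single instance of $\alpha$, exactly as in the proof of freeness of $\Tam$ in Section~\ref{sect:a}. A generating shrink morphism $\sigma_j\colon(\ord m,u,S)\to(\ord{m-1},v,T)$ is sent to the tensor of identities with a single instance of $\lambda$, exactly as in the proof of Proposition~\ref{prop:CS-univ-prop}. Both definitions are forced, so $F$ on generators is uniquely determined.

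It then remains to check that $F$ respects every relation in the presentation. The relations coming from the Tamari posets are respected because of the freeness of $\Tam$, and the relations $\sigma_i\sigma_{j+1}=\sigma_j\sigma_i$ among shrink morphisms are respected by Proposition~\ref{prop:CS-univ-prop}. For the mixed relation \eqref{eq:US-relation1}, $a_{\sigma_j c,\sigma_j d}\,\sigma_j=\sigma_j\,a_{c,d}$, the hypothesis (and in particular the excluded case $c=j$, $d=j+1$) ensures that the $\lambda$-move and the associativity move act on subterms of the bracketing which are either tensor-disjoint or nested, so commutativity in \C follows from naturality of $\alpha$, naturality of $\lambda$, or bifunctoriality of $\ox$, after an elementary case analysis on the position of $j$ relative to $c$ and $d$.

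The main obstacle is the mixed relation \eqref{eq:US-relation2}, $\sigma_j=\sigma_j\,a_{j,d}$. Unwinding the definitions, the hypotheses $j<d$, $\ell_S(j)=\ell_S(d)=j$ and $r_S(j+1)=d$ mean that the relevant subterm of the bracketing of $S$ has the form $(I\ox M)\ox N$, where the positions $j+1,\dots,d$ carry $M$ and the positions $d+1,\dots,r_S(d+1)$ carry $N$. The left-hand side of the relation applies $\sigma_j$ directly and corresponds to $\lambda_M\ox 1_N\colon(I\ox M)\ox N\to M\ox N$, while the right-hand side first rebrackets via $a_{j,d}$ to $I\ox(M\ox N)$ and then applies $\sigma_j$ as $\lambda_{M\ox N}\colon I\ox(M\ox N)\to M\ox N$. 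Modulo the outer tensoring with identities, relation \eqref{eq:US-relation2} therefore becomes precisely the left unit axiom \eqref{leftunit}. Once this identification is carried out, $F$ is a well-defined functor; it strictly preserves the skew monoidal structure by construction, and it is unique because all its values are forced.
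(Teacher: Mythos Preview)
Your proposal is correct and follows essentially the same route as the paper: define $F$ on objects and on the two kinds of generators (forced by the universal properties of $\Tam$ and \Fl), then verify the mixed relations \eqref{eq:US-relation1} and \eqref{eq:US-relation2}. Your unwinding of \eqref{eq:US-relation2} as an instance of the axiom \eqref{leftunit} is exactly what the paper does; for \eqref{eq:US-relation1} the paper makes the case analysis explicit by listing six positions of the triangle $(j,j+1,q)$ relative to the triangles $(b',c',d')$, $(b',d',e')$ of the $\alpha$-move, but the conclusion in each case is, as you say, bifunctoriality of $\ox$ or naturality of $\lambda$ or of $\alpha$.
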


\begin{proof}
Let \C be a left unital skew monoidal category,
and $C$ an object of \C. We must show that there
is a unique structure-preserving functor $F$ from \Fla 
to \C sending the generator $X$ to \C. 

The uniqueness part is immediate: by the universal
property of $\Tam$ we know where all  Tamari morphisms must go, and by the universal property of \Fla we know where all shrink morphisms must go. 

The only thing to do is to check that when we define
$F$ in this way the relations \eqref{eq:US-relation1}
and \eqref{eq:US-relation2} are respected.

Consider first \eqref{eq:US-relation1}. We have triangles $(b',c',d')$ and $(b',d',e')$ and $(j,j+1,q)$ in $S$.
Then we are in one of the following cases:
\begin{enumerate}[(a)]
\item $q\le b'$
\item $j+1\le b'<e'\le q$
\item $b'\le j<q\le c'$
\item $c'\le j<q\le d'$
\item $d'\le j<q\le e'$
\item $e'\le j$.
\end{enumerate}
In each case the relation will hold in \C by 
functoriality of $\ox$ and/or naturality of $\lambda$.

Now consider \eqref{eq:US-relation2}, which involves  a triangle $(j,j+1,d)$. The relation will hold in \C because of the axiom
$$\xymatrix{
(I\ox A)\ox B \ar[r]^{\alpha} \ar[dr]_{\lambda\ox1} & 
I\ox(A\ox B) \ar[d]^{\lambda} \\
& A\ox B. }$$
\end{proof}

\section{Right skew units and duality}
\label{sect:r}


Given a pointed magmoidal category \C, write
$\C\oprev$ for the pointed magmoidal category
with underlying category $\C\op$ and with the 
reverse tensor product. 

To give $\C\oprev$ a skew semimonoidal
structure is equivalent to giving \C is skew
semimonoidal structure. A left unit for \C is 
equivalent to a right unit for the corresponding
skew semimonoidal category $\C\oprev$.
Thus the free right unital skew semimonoidal
category \Far on one object should be $\Fla\oprev$.


But we shall instead give a different model, which will allow us to see more directly the forgetful functor $\Far\to\dbot$. It is useful at first to work with \Flafat rather than \Fla.

For each object $(M,u,S)$ of \Flafat there is an associated object $(M\op,u,S\op)$, where $\ell_{S\op}=r_S$; we call this object $(M,u,S)\op$. This process defines an involution on the set of objects of \Flafat. Furthermore, we have $(M,u,S)\op\ox(N,v,T)\op=\left(\left(N,v,T\right)\ox\left(M,u,S\right)\right)\op$; this includes in particular the fact that  the ordinal sum $M\op+N\op$ is given by $(N+M)\op$.

If $\sigma\colon M\to N$ is an order-preserving surjection, then it has a right adjoint $\sigma^*\colon N\to M$ which is an order-preserving injection. The adjunction $\sigma\dashv \sigma^*$ between $M$ and $N$ can also be seen as an adjunction $\sigma^*\dashv\sigma$ between $N\op$ and $M\op$. 

The category of order-preserving surjections is contravariantly isomorphic to the category of order-preserving and left adjoint injections; the isomorphism sends $M$ to $M\op$ and $\sigma\colon M\to N$ to $\sigma^*\colon N\op\to M\op$.

Our ``fat'' version of the free right unital skew semimonoidal category on one object will be the category with the same objects as \Flafat, in which a morphism from $(N,v,T)$ to $(M,u,S)$ is an injective left-adjoint $\delta\colon N\to M$, for which the right adjoint $\delta^*\colon M\to N$ defines an \F-surjection $(M\op,u,S\op)\to(N\op,v,T\op)$. We call such a $\delta$ an {\em \F-injection}. When $\delta^*$ is not just an \F-surjection but a shrink morphism, we call $\delta$ a {\em swell morphism}.

The assignments $(M,u,S)\mapsto(M,u,S)\op$ and $\sigma\mapsto\sigma^*$ define an isomorphism from $(\Flafat)\oprev$ to the category \Farfat of \F-injections. It is useful, however to have a more explicit description of the swell morphisms and \F-injections. 

The first condition for $\sigma$ to be a shrink morphism is that $\sigma^*$ induce a bijection between $v$ and $u$. But $\sigma^*$ corresponds to what we are calling $\delta$, so the condition amounts to the requirement that $\delta$ induce a bijection between $v$ and $u$. 

The second condition for $\delta^*\colon M\op\to N\op$ to be a shrink morphism  involves the successor of an element $j$. But the successor in $M\op$ is the predecessor in $M$. So the condition becomes: if $\delta^*(j)=\delta^*(j-1)$ then $\delta^*\ell_{S\op}(j)=\delta^*(j)$; or, equivalently, if $\delta^*(j)=\delta^*(j-1)$ then $\delta^*r_S(j)=\delta^*(j)$.

To say that $\delta^*(j)=\delta^*(j-1)$ is equivalent to saying that $j\notin\im(\delta)$. To say that $\delta^*r_S(j)=\delta^*(j)$ is to say that if $j<i\le r_S(j)$ then $i\notin\im(\delta)$. Thus the condition states that if $j\notin\im(\delta)$ and $j<i\le r_S(j)$ then $i\notin\im(\delta)$. Taking this contrapositive gives: if $j<i\le r_S(j)$ and $i\in\im(\delta)$ then $j\in\im(\delta)$. Finally if $i\in\im(\delta)$ then we may write $i=\delta(k)$, and so obtain the condition: if $j<\delta(k)\le r_S(j)$ then $j\in\im(\delta)$.

The third condition for $\delta^*\colon M\op\to N\op$ to be a shrink morphism involves a right adjoint to this $\delta^*$. This is the same as a left adjoint to $\delta^*\colon M\to N$, and this is just $\delta$. Thus the second condition becomes $\ell_{T\op}=\delta^*\ell_{S\op}\delta$, or equivalently $r_T=\delta^* r_S \delta$.

We summarize this as follows.
A {\em swell morphism} from $(N,v,T)$ to $(M,u,S)$ is an injective left adjoint $\delta\colon N\to M$ satisfying the following conditions:
\begin{enumerate}[(a)]
\item $\delta$ induces a bijection between $v$ and $u$
\item If $j<\delta(k)\le r_S(j)$, then $j\in\im(\delta)$
\item $r_T=\delta^* r_S \delta$.
\end{enumerate}

\begin{proposition}
  If $(M,u,S)$ is an object of \Fpmfat, an injective left adjoint $\delta\colon N\to M$ determines a swell morphism with codomain $(M,u,S)$ if and only if 
  \begin{enumerate}[(a)]
  \item $u\subseteq\im(\delta)$
    \item if $j<\delta(k)\le r_S(j)$ then $j\in\im(\delta)$
  \end{enumerate}
The domain is then $(N,v,T)$, where $v=\delta^{-1}(u)$ and $r_T=\delta^* r_S \delta$; or equivalently $\ell_T=\delta^* \ell_S \delta_*$, where $\delta_*$ denotes the right adjoint of the order-preserving surjection $\delta^*\colon M\to N$. 
\end{proposition}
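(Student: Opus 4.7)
The plan is to deduce this from the op-dual statement for shrink morphisms, namely Lemma~\ref{lemma:sigma-existence}. Recall from the translation carried out just above the proposition that a swell morphism $\delta \colon (N,v,T) \to (M,u,S)$ is precisely an injective left adjoint whose right adjoint $\delta^* \colon (M\op, u, S\op) \to (N\op, v, T\op)$ is a shrink morphism. The biconditional should therefore be obtainable by translating the two clauses of Lemma~\ref{lemma:sigma-existence} through this op-involution: the clause $\sigma^\ell \cap u = \varnothing$ applied to $\sigma = \delta^*$ rewrites to ``each element of $u$ is the minimum of its $\delta^*$-fibre'', which is our (a), and the clause ``$\sigma \ell_S(j) = \sigma(j)$ whenever $j \in \sigma^\ell$'' rewrites, after swapping $\ell$ for $r$, to the statement that $r_S$ does not leave the $\delta^*$-fibre of any $j \notin \im(\delta)$, which is the contrapositive form of our (b).

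For the forward direction I would simply observe that (a) and (b) are weakenings of the swell definition: (a) is immediate since $\delta$ restricts to a bijection $v \to u$ and hence $u \subseteq \im(\delta)$, while (b) is literally condition (b) of that definition. For the reverse direction, given (a) and (b), I would set $v := \delta^{-1}(u)$, so that injectivity of $\delta$ together with (a) promotes (a) to the required bijection $v \to u$, and then define $T$ by $r_T := \delta^* r_S \delta$. The key point is that this really is an rbf on $N$; this follows from Proposition~\ref{prop:cob-for-lbf} applied to $\xi = \delta^*$, provided that $\delta^*$ admits both a left and a right adjoint. The left adjoint is $\delta$ itself, while the right adjoint $\delta_*$ exists because $\delta^*$ preserves the bottom element: $\delta$ being a left adjoint gives $\delta(\bot_N) = \bot_M$, and injectivity of $\delta$ makes $\bot_N$ the unique preimage of $\bot_M$, so $\delta^*(\bot_M) = \bot_N$.

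It remains to verify the equivalent formula $\ell_T = \delta^* \ell_S \delta_*$. The plan here is to apply Proposition~\ref{prop:cob-lr} to $\sigma = \delta^*$, which yields the inequality $\delta^* r_S \delta \le \ora{\delta^* \ell_S \delta_*}$ automatically and reduces the reverse inequality to the side condition ``$i < \delta_* \delta^*(i)$ implies $\delta^* \ell_S(i) = \delta^*(i)$''. Verifying this side condition is the main obstacle; my plan is a direct fibre-theoretic argument using the characterization $\ell_S(i) = \max\{h : h \le i < r_S(h)\}$ from Proposition~\ref{bijbf}. Setting $a := \ell_S(\delta_*(j))$ for fixed $j$, if $a \notin \im(\delta)$ then (b) forces $r_S(a) \le \delta_* \delta^*(a)$; combined with $a \le \delta_*(j) < r_S(a)$ and the strict monotonicity of $\delta_*$ (which follows from $\delta^* \delta_* = 1$), this contradicts the adjunction $\delta^* \dashv \delta_*$. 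Hence $a \in \im(\delta)$, and the two formulas for $T$ give matching bracketing functions.
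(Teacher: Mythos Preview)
The overall strategy of deducing everything by op-duality from Lemma~\ref{lemma:sigma-existence} and Remark~\ref{rmk:shrink-r} is exactly what the paper does, and your treatment of conditions (a) and (b) is correct. The problem is in the last paragraph, where you abandon the op-dual approach and instead apply Proposition~\ref{prop:cob-lr} \emph{directly} to the surjection $\delta^*\colon M\to N$. The side condition you extract, ``$i<\delta_*\delta^*(i)$ implies $\delta^*\ell_S(i)=\delta^*(i)$'', is \emph{not} equivalent to (b), and in fact need not hold. For instance, take $M=\{0,1,2,3\}$, $N=\{0,1,2\}$, $\delta(k)=k$, $u=\varnothing$, and let $\ell_S$ be the leftmost bracketing (so $\ell_S(0)=\ell_S(1)=\ell_S(2)=0$, $\ell_S(3)=3$, and $r_S(i)=i$ for all $i$). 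Then (b) holds vacuously, but for $i=2$ one has $\delta_*\delta^*(2)=3>2$ while $\delta^*\ell_S(2)=0\neq 2$.

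The fix is simply to stay on the op side: apply Proposition~\ref{prop:cob-lr} to the surjection $\delta^*\colon M\op\to N\op$ with the lbf $\ell_{S\op}=r_S$, exactly as in Remark~\ref{rmk:shrink-r}. The side condition then reads ``$i\notin\im(\delta)$ implies $\delta^*r_S(i)=\delta^*(i)$'', which is precisely the contrapositive of (b), and the equivalence $r_T=\delta^* r_S\delta \Leftrightarrow \ell_T=\delta^*\ell_S\delta_*$ follows immediately; this is what the paper is invoking. Your fibre-theoretic argument that $a=\ell_S(\delta_*(j))\in\im(\delta)$ is actually correct and can be completed to a direct proof of the inequality $\delta^*\ell_S\delta_*\le\ell_T$ (using $\delta\delta^*(a)=a$ to get $r_T(\delta^*(a))=\delta^*r_S(a)>j$), but it does not verify the side condition you stated; it bypasses it.
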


\begin{proof}
Since $\delta$ is injective, it will induce a bijection from $\delta^{-1}(u)$ to $u$ just when  $u\subseteq\im(\delta)$. Thus condition (a) in the proposition is equivalent to condition (a) in the definition; also condition (b) is unchanged. The fact that $r_T=\delta^* r_S\delta$ is equivalent to $\ell_T=\delta^*\ell_S\delta_*$ follow from Proposition~\ref{prop:cob-lr}.
\end{proof}

Once again, it is useful to analyze what this says in the case of a generating injection. Let $\delta_i\colon \ord n\to \ord{n+1}$ be the injective order-preserving map whose image does not contain $i$. Since we want $\delta_i$ to be a left adjoint, we ask that $i\neq0$. Then the right adjoint $\delta^*_i$ of $\delta_i$ is $\sigma_{i-1}$.  This in turn has a right adjoint, called $(\delta_i)_*$ in the language of the last proposition, and given by $\delta_{i-1}$.

\begin{proposition}
If $i\neq0$ then $\delta_i\colon\ord n\to\ord{n+1}$ defines a swell morphism with codomain  $(\ord{n+1},u,S)$ if and only if $i\notin u$ and $\ell_S(i)\neq i$. The domain is then $(\ord n,v,T)$, where $v=\delta^{-1}_i(u)$ and where $\ell_T=\sigma_{i-1}\ell_S\delta_{i-1}$ and $r_T=\sigma_{i-1}r_S\delta_i$.
\end{proposition}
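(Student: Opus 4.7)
The plan is to specialize the preceding general criterion for a swell morphism to the particular map $\delta_i\colon\ord n \to\ord{n+1}$. Recall that the previous proposition requires an injective left adjoint $\delta$ with codomain $(\ord{n+1},u,S)$ satisfying (a) $u\subseteq\im(\delta)$ and (b) if $j<\delta(k)\le r_S(j)$ then $j\in\im(\delta)$. Since $i\neq 0$ is assumed, $\delta_i$ does indeed have a left adjoint (the surjection $\delta_i^*=\sigma_{i-1}$), so the proposition applies. The image of $\delta_i$ is $\ord{n+1}\setminus\{i\}$, which is what makes the translation of conditions (a) and (b) concrete.

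For condition (a), I would simply observe that $u\subseteq\im(\delta_i) = \ord{n+1}\setminus\{i\}$ is literally the same as $i\notin u$. The main step is translating condition (b). Since the only element of $\ord{n+1}$ not in $\im(\delta_i)$ is $i$ itself, (b) fails exactly when there is some $k$ with $i<\delta_i(k)\le r_S(i)$. For $\delta_i(k)>i$ one must have $k\ge i$, in which case $\delta_i(k)=k+1$; thus (b) fails iff there exists $k\ge i$ with $i<k+1\le r_S(i)$, which reduces simply to $r_S(i)>i$.

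Now I would invoke Proposition~\ref{fixbf}(i): for an interior index $i$, we have $r_S(i)\neq i$ iff $\ell_S(i)=i$. Since $r_S(i)\ge i$ always, $r_S(i)>i$ is the same as $r_S(i)\neq i$, and hence the same as $\ell_S(i)=i$. Thus (b) holds iff $\ell_S(i)\neq i$, completing the biconditional. (A small bookkeeping point: the statement assumes $i\neq 0$, and the case $i=n$ is automatic, since then nothing lies strictly above $i$ in $\ord{n+1}$ so (b) holds trivially, and correspondingly $\ell_S(n)=n-1\neq n$ for top-preserving lbfs; one should just check that the statement is consistent in these boundary cases.)

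For the formula giving the domain, the previous proposition already supplies $v=\delta_i^{-1}(u)$ and gives $\ell_T=\delta_i^*\ell_S(\delta_i)_*$ and $r_T=\delta_i^*r_S\delta_i$. Using the identifications $\delta_i^*=\sigma_{i-1}$ and $(\delta_i)_*=\delta_{i-1}$ recorded just before the statement, these become the claimed formulas $\ell_T=\sigma_{i-1}\ell_S\delta_{i-1}$ and $r_T=\sigma_{i-1}r_S\delta_i$. The main obstacle is really only the reformulation of (b) into a positional condition on $\ell_S$; everything else is mechanical substitution of adjoints.
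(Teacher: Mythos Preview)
Your argument is essentially identical to the paper's: translate condition~(a) to $i\notin u$, translate condition~(b) to the impossibility of $i<r_S(i)$, and then invoke Proposition~\ref{fixbf}(i) to rewrite this as $\ell_S(i)\neq i$; the domain formulas follow by substituting the explicit adjoints. Two small slips worth cleaning up: you write that $\delta_i$ ``has a left adjoint'' when you mean it \emph{is} a left adjoint (equivalently, has the right adjoint $\delta_i^*=\sigma_{i-1}$), and in your parenthetical about $i=n$ you claim $\ell_S(n)=n-1$, whereas an lbf preserves the top element, so $\ell_S(n)=n$ --- this boundary case deserves more care (Proposition~\ref{fixbf}(i) applies only to interior $i$), but the paper's own proof glosses over it in the same way.
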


\begin{proof} 
To say that $u\subseteq\im(\delta)$ is to say that $i\notin u$. On the other hand, $j\in\im(\delta)$ is true for all $j$ except $j=i$, so we just need to ensure that $i<\delta(k)\le r_S(i)$ is impossible; equivalently that $i<r_S(i)$ is false. But this in turn amounts, by Proposition~\ref{fixbf}(i), to saying that $\ell_S(i)<i$.
\end{proof}

Now an {\em \F-injection} from $(N,v,T)$ to $(M,u,S)$ is an injective left adjoint $\delta\colon N\to M$ which defines a swell morphism $(N,v,T)\to (M,u,S')$ for some $S'\le S$.
The \F-injections are the morphisms of a category \Farfat with the same objects as \Fpmfat.
In order to slim down the category \Farfat, we restrict to those objects $(M,u,S)$ for which $M$ is an ordinal $\ord m$, and call the resulting category \Far.

As a formal dual of Theorem~\ref{thm:US-univ-prop}
we have:

\begin{theorem}\label{thm:UI-univ-prop}
  The category \Far of \F-injections is 
the free right unital skew monoidal category 
generated by a single object. 
\end{theorem}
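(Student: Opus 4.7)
The plan is to deduce this theorem as a formal consequence of Theorem~\ref{thm:US-univ-prop} via the \oprev duality already set up in the section. First I would make precise the claim, implicit at the start of the section, that the notions of left unital skew monoidal category on \C and right unital skew monoidal category on $\C\oprev$ are equivalent: given a left unital structure $(\ox,\alpha,I,\lambda)$ on \C, the reversed tensor on $\C\op$ together with the same $\alpha$ (read in $\C\op$) and with $\lambda\colon I\ox X\to X$ reinterpreted as $\rho_{X}\colon X\to X\ox' I$ in $\C\oprev$ is a right unital skew monoidal structure, and this passage is bijective. A two line check of the coherence axioms confirms this: the left unit axiom for $\lambda$ in \C is literally the right unit axiom for $\rho$ in $\C\oprev$. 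Consequently, free right unital skew monoidal envelopes are $\oprev$--duals of free left unital ones.

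Second, I would combine this with Theorem~\ref{thm:US-univ-prop}, which says \Fla (or equivalently its fat version \Flafat) is the free left unital skew monoidal category on one object, to conclude that $(\Flafat)\oprev$ is the free right unital skew monoidal category on one object. It remains to identify $(\Flafat)\oprev$ with \Farfat via the isomorphism already exhibited in the excerpt, namely $(M,u,S)\mapsto(M,u,S)\op=(M\op,u,S\op)$ on objects and $\sigma\mapsto\sigma^*$ on morphisms. The work is to verify that this isomorphism transports the right unital skew monoidal structure induced on $(\Flafat)\oprev$ onto the structure that we want on \Farfat. Compatibility with tensor uses the identity $(N+M)\op = M\op+N\op$ noted just before the $\op$-involution was introduced, together with the fact that the right adjoint $(\sigma+\tau)^*$ of an ordinal sum factors as $\tau^*+\sigma^*$ on the reversed sum. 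Compatibility with associativity is immediate because associativity is encoded on both sides by Tamari morphisms $S\to S'$ (and under $S\mapsto S\op$ the Tamari order is simply reversed, which is precisely what $\oprev$ asks of $\alpha$). Finally, the $\lambda$-components of \Flafat are certain shrink morphisms $\sigma_0$; under $\sigma\mapsto \sigma^*$ these map to swell morphisms $\delta_\top$, which are exactly the $\rho$-components of \Farfat.

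Third, passing from the fat version \Farfat to \Far itself is routine: the inclusion $\Far\hookrightarrow\Farfat$ is an equivalence of categories, since every object of \Farfat is isomorphic, via the unique order isomorphism of totally ordered sets, to one supported on an ordinal. The skew monoidal structure restricts strictly to \Far (the ordinals are closed under ordinal sum and contain \ord 1), and the universal property transfers across this equivalence in the usual way, yielding the result for \Far.

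The main obstacle is the bookkeeping in the second paragraph: one has to keep track of several simultaneous dualities (direction of arrows, order on $M$, order on $\Tam_M$, and left/right adjoints under $\sigma\mapsto\sigma^*$), and verify that when all four are applied together the induced structure on $(\Flafat)\oprev$ matches the structure independently defined on \Farfat. Once this matching is in place, the universal property is obtained automatically from Theorem~\ref{thm:US-univ-prop} with no further combinatorial work.
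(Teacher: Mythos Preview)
Your proposal is correct and is precisely the approach the paper takes: the paper offers no proof beyond the sentence ``As a formal dual of Theorem~\ref{thm:US-univ-prop} we have'', and your three paragraphs simply unpack what that formal duality entails, using the isomorphism $(\Flafat)\oprev\cong\Farfat$ that the paper has already set up. The only comment is that your phrase ``the same $\alpha$ (read in $\C\op$)'' is a little loose---one really uses that reversing both morphism direction and tensor order sends the skew associator to a skew associator in the same direction---but you effectively say this later when discussing the Tamari order, and the bookkeeping you flag as the main obstacle is exactly the content of the argument.
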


\section{Skew monoidal categories}
\label{sect:lar}

\subsection{General maps} 

An {\em \F-morphism} from 
$(\ord m,u,S)\to(\ord n,v,T)$ is an order-preserving 
function $\phi\colon \ord m\to \ord n$ which can be written as the composite 
$$\xymatrix{
(\ord m,u,S) \ar[r]^{\sigma} & (\ord p,w,R) \ar[r]^{\delta} & (\ord n,v,T) }$$
of an \F-surjection $\sigma$ and an \F-injection $\delta$.

\begin{remark}
  \begin{enumerate}[(a)]
  \item Any order-preserving $\phi\colon\ord m\to \ord n$ has a unique factorization as an order-preserving surjection $\sigma\colon\ord m\to\ord p$ followed by an order-preserving injection $\delta\colon\ord p\to\ord n$.
\item The fact that $\delta$ has a right adjoint is equivalent to the fact that it preserves the bottom element. Since $\sigma$ preserves the bottom element as well, this is equivalent to the composite $\phi$ preserving the bottom element, and so to the composite $\phi$ having a right adjoint. 
\item The subset $w\subseteq\ord p$ is necessarily the image $\sigma u$. A necessary and sufficient condition for $\sigma^*$ to induce a bijection $w\cong u$ and $\delta$ to induce a bijection $w\cong v$ is that $\phi$ and $\phi^*$ induce a mutually inverse pair of bijections $u\cong v$.
\item Unlike the other parts of the structure, the triangulation $R$ is not uniquely determined. Implicit in the proof of Proposition~\ref{prop:UG-presentation} below is the fact that there is a canonical choice for $R$.
  \end{enumerate}
\end{remark}

\subsection{Composition}

Once again, work is needed to prove that these \F-morphisms compose. It will suffice to show that we can compose maps of the form 
$$\xymatrix{
(\ord m,u,S) \ar[r]^-{\delta_i} & (\ord{m+1},v,U) \ar[r]^1 & 
(\ord{m+1},v,V) \ar[r]^-{\sigma_j} & (\ord m,w,T). }$$
We begin by simplifying this situation, using the opposites of the rewriting rules \eqref{eq:US-relation1} and \eqref{eq:US-relation2} and the corresponding rules for swell morphisms.

First consider all the $V'\in\Tam_{m+1}$ with 
$U\le V'\le V$ and with $\ell_{V'}(j)=j$. 
By Proposition~\ref{prop:wedge-ells} the  
collection of all $V'\in\Tam_{m+1}$ with 
$\ell_{V'}(j)=j$ is closed in $\Tam_{m+1}$ under 
meets, and so we can let $V'$ be minimal
with the property that $\ell_{V'}(j)=j$
and $U\le V'\le V$. 

Since $\ell_{V'}(j)=j$ by definition of $V'$, and $j\notin v$, there is a shrink morphism 
$\sigma_j\colon(\ord{m+1},v,V')\to(\ord m,w,T')$ where $T'$ is given by $\ell_{T'}=\sigma_j\ell_{V'}\delta_j$. Then $\ell_{T'}(k)=\sigma_j\ell_{V'}\delta_j(k)\le\sigma_j\ell_V\delta_j(k)=\ell_T(k)$, where the first equality holds by definition of $T'$, the inequality by the fact that $V'\le V$ and $\sigma_j$ is order-preserving, and the last equality holds since $\sigma_j\colon(\ord{m+1},v,V)\to(\ord m,w,T)$ is a shrink morphism. Thus $T'\le T$. Now we have 
$$\xymatrix{
(\ord m,u,S) \ar[r]^-{\delta_i} & (\ord{m+1},v,U) \ar[r]^{1} & 
(\ord{m+1},v,V') \ar@{.>}[r]^1 \ar[d]_{\sigma_j} &
(\ord{m+1},v,V) \ar@{.>}[d]^{\sigma_j} \\
 && (\ord m,w,T') \ar@{.>}[r]^1 & (\ord m,w,T) }$$
and it will clearly suffice to ignore $(\ord{m+1},v,V)$
and $(\ord m,w,T)$ and show that the solid
part of the diagram has a composite. 
Thus we may
as well suppose from the beginning that there 
is {\em no} $W$ with $U\le W<V$ and $\ell_W(j)=j$.

Similarly, we can replace $U$ and $S$ by $U'\ge U$ and $S'\ge S$ if necessary, to reduce to the case where there is no $W$ with $U<W\le V$ and $\ell_W(i)\neq i$.

\subsubsection*{Case 1: $U=V$.}

Then we have a swell morphism $\delta_i\colon(\ord m,u,S)\to(\ord{m+1},v,V)$ and a shrink morphism $\sigma_j\colon(\ord{m+1},v,V)\to(\ord m,w,T)$. This is possible only if $\ell_V(j)=j$ and $\ell_V(i)\neq i$, which rules out the case $i=j$.

\subsubsection*{Case 1a: $i<j$.}

As functions, we have a factorization $\sigma_j\delta_i=
\delta_i\sigma_{j-1}$. We shall show that this lifts to a factorization of shrink and swell morphisms.

Since $\sigma_j\colon(\ord{m+1},v,V)\to(\ord m,w,T)$ is a shrink morphism $j\notin v$; but $j=\delta_i(j-1)$, and so $j-1\notin u$. Also $\ell_S(j-1)=\sigma_{i-1}\ell_V\delta_{i-1}(j-1)=\sigma_{i-1}\ell_V(j)=\sigma_{i-1}(j)=j-1$. Thus $\sigma_{j-1}$ defines a shrink morphism $(\ord m,u,S)\to(\ord{m-1},u',S')$ where $u'=\sigma_{j-1} u$ and $\ell_{S'}=\sigma_{j-1}\ell_S\delta_{j-1}$.

Since $\delta_i\colon(\ord m,u,S)\to(\ord{m+1},v,V)$ is a swell morphism, $i\notin v$ and so $i=\sigma_j(i)\notin w$. Also 
$\ell_T(i)=\sigma_j\ell_V\delta_j(i)=\sigma_j\ell_V(i)$ and $i=\sigma_j(i)$, while $\ell_V(i)<i$, and so $\ell_T(i)=i$ could occur only if $\ell_V(i)=j$ and $i=j+1$, but $i=j+1$ is specifically excluded from this case. Thus $\delta_i$ defines a swell morphism $(\ord{m-1},w',T')\to(\ord m,w,T)$ where $w'=\delta^{-1}_iw$ and 
$\ell_{T'}=\sigma_{i-1}\ell_T\delta_{i-1}$.

We now show that $w'=u'$ and $T'=S'$. First of all, 
$\delta_i w'=w=\sigma_j v=\sigma_j\delta_i u=\delta_i\sigma_{j-1}u=\delta_i u'$, and so $w'=u'$.
Secondly,  $\ell_{S'}=\sigma_{j-1} \ell_S \delta_{j-1} = \sigma_{j-1} \sigma_{i-1} \ell_V \delta_{i-1} \delta_{j-1}=\sigma_{i-1}\sigma_j\ell_V\delta_j\delta_{i-1}=\sigma_{i-1}\ell_T\delta_{i-1}=\ell_{T'}$, and so $S'=T'$. Thus we have a commutative square 
\begin{equation}
  \label{eq:UGrelation1}
  \xymatrix{
(\ord m,u,S) \ar[r]^{\delta_i} \ar[d]_{\sigma_{j-1}} & 
(\ord{m+1},v,V) \ar[d]^{\sigma_j} \\
(\ord{m-1},u',S') \ar[r]_{\delta_i} & (\ord m,w,T) }
\end{equation}
in which the vertical maps are shrink morphisms and the horizontal ones are swell morphisms, and so the composite is an \F-morphism.

\subsubsection*{Case 1b: $i=j+1$.}

In this case, the function $\sigma_j\delta_i$ is the identity, so we need to show that $S\le T$. Now 
$\ell_S=\sigma_{i-1}\ell_V\delta_{i-1}=\sigma_j\ell_V\delta_j=\ell_T$ and so in fact $S=T$. Thus the triangle 
\begin{equation}
  \label{eq:UGrelation2}
  \xymatrix{
(\ord m,u,S) \ar[r]^{\delta_i} \ar[dr]_1 &
(\ord{m+1},v,V) \ar[d]^{\sigma_j} \\
& (\ord m,w,T) }
\end{equation}
commutes.

\subsubsection*{Case 1c: $i>j+1$.}

This time we have a factorization $\sigma_j\delta_i=\delta_{i-1}\sigma_j$ as functions. 

Since $\sigma_j\colon(\ord{m+1},v,V)\to(\ord m,w,T)$ 
is a shrink morphism, $j\notin v$; but $j=\delta_i(j)$ and so $j\notin u$. Also 
$\ell_S(j)=\sigma_{i-1}\ell_V\delta_{i-1}(j)=\sigma_{i-1}\ell_V(j)=\sigma_{i-1}(j)=j$. Thus $\sigma_j$ defines a shrink morphism $(\ord m,u,S)\to(\ord{m-1},u',S')$ where $u'=\sigma_j u$ and $\ell_{S'}=\sigma_j\ell_S\delta_j$.

Since $\delta_i\colon(\ord m,u,S)\to(\ord{m+1},v,V)$ is a swell morphism, $i\notin v$ and so $i-1=\sigma_j(i)\notin w$. Also $r_T(i-1)=\sigma_{j-1}r_V\delta_{j-1}(i-1)=\sigma_{j-1}r_V(i)=\sigma_{j-1}(i)=i-1$. Thus $\delta_{i-1}$ defines a swell morphism $(\ord{m-1},w',T')\to(\ord m,w,T)$ where $w'=\delta^{-1}_{i-1}w$ and $\ell_{T'}=\sigma_{i-2}\ell_T\delta_{i-2}$.

We now show that $u'=w'$ and $S'=T'$. First of all, $\delta_{i-1}w'=w=\sigma_j v=\sigma_j\delta_i u=\delta_{i-1}\sigma_j u=\delta_{i-1} u'$ and so $w'=u'$. Secondly, $\ell_{S'}=\sigma_j\ell_S\delta_j=\sigma_j\sigma_{i-1}\ell_V\delta_{i-1}\delta_j=\sigma_{i-2}\sigma_j\ell_V\delta_j\delta_{i-2}=\sigma_{i-2}\ell_T\delta_{i-2}=\ell_{T'}$, and so $S'=T'$ as required. Thus we have a commutative square 
\begin{equation}
  \label{eq:UGrelation3}
  \xymatrix{
(\ord m,u,S) \ar[r]^{\delta_i} \ar[d]_{\sigma_{j}} & 
(\ord{m+1},v,V) \ar[d]^{\sigma_j} \\
(\ord{m-1},u',S') \ar[r]_{\delta_{i-1}} & (\ord m,w,T) }
\end{equation}
in which the vertical maps are shrink morphisms and the horizontal ones are swell morphisms, and so the composite is an \F-morphism.

\subsubsection*{Case 2: $U<V$.}
By minimality of $V$, we must have $\ell_U(j)<j$, and so there are $U$-triangles $(\ell_U(j),j+1,q)$ and $(\ell_U(j),k+1,j+1)$ for some $k<j$; then $\ell_U(k)=\ell_U(j)$. We now have a generating inequality $a_{j,k}\colon U\le W$; explicitly $\ell_W$ agrees with $\ell_U$ except that $\ell_W(j)=k+1$ while $\ell_U(j)<k+1$.

Furthermore, $\ell_W(j)=k+1\le j=\ell_V(j)$, while if $h\neq j$ then $\ell_W(h)=\ell_U(h)\le \ell_V(h)$, so also $W\le V$.
Thus by maximality of $U$, we must have $\ell_W(i)=i$.
But $\ell_U(i)\neq i$, and $\ell_W$ and $\ell_U$ agree except at $j$, so we must have $i=j=k+1$. 
Now $\ell_W(j)=k+1=j$, and so by minimality of $V$ we must have $V=W$.

Since $\ell_V$ and $\ell_U$ now agree except at $j$, we have $\ell_V\delta_j=\ell_U\delta_j$, and so $\ell_T=\sigma_j\ell_V\delta_j=\sigma_j\ell_U\delta_j$. On the other hand $\ell_S=\sigma_{j-1}\ell_U\delta_{j-1}$, so $\ell_S$ and $\ell_T$ can only differ at $j-1$ or at some $h$ with $\ell_U\delta_{j-1}(h)=j$.

If $\ell_U\delta_{j-1}(h)=j$, then $j\le \delta_{j-1}(h)$ and so $j\le h+1$ and $j-1\le h$. Thus we have $\ell_U(h+1)=j$. But $\ell_U$ is idempotent and so $\ell_U(j)=j$, which contradicts $\ell_U(j)<j$. Thus $\ell_S$ and $\ell_T$ agree except possibly at $j-1$ (which is equal to $k$).

For this last case, we have on the one hand $\ell_U\delta_{j-1}(j-1)=\ell_U(j)=\ell_U(k)=\ell_U(j-1)$, and so $\ell_S(j-1)=\sigma_{j-1}\ell_U\delta_{j-1}(j-1)=\sigma_{j-1}\ell_U(j-1)=\ell_U(j-1)$, where the last step uses the fact that $\ell_U(j-1)\le j-1$.  On the other hand, $\ell_T(j-1)=\sigma_j\ell_U\delta_j(j-1)=\sigma_j\ell_U(j-1)=\ell_U(j-1)$, and so in fact $\ell_S(j-1)=\ell_T(j-1)$.

Thus $\ell_S(h)=\ell_T(h)$ for all $h$, and $S=T$. We therefore have a commutative diagram
\begin{equation}
  \label{eq:UGrelation4}
  \xymatrix{
(\ord{m+1},v,U) \ar[r]^{a_{j,j+1}} &
(\ord{m+1},v,V) \ar[d]^{\sigma_j} \\
(\ord m,u,S) \ar[r]_1 \ar[u]^{\delta_j}  & (\ord m,w,S) }
\end{equation}

Thus we can compose \F-maps and they
form a category \F, with an evident faithful functor $U\colon\F\to\dbot$.

\subsection{Presentation}

We now describe a presentation for \F. The generators will consist of the generating shrink morphisms, the generating swell morphisms, and the generating inequalities in $\Tam$.

The relations will consist of the previously given relations for \F-surjections, the dual relations for \F-injections, and the relations corresponding to 
\eqref{eq:UGrelation1}, \eqref{eq:UGrelation2},
\eqref{eq:UGrelation3},
and \eqref{eq:UGrelation4}.

\begin{proposition}
  \label{prop:UG-presentation}
The given generators and relations constitute a 
presentation for the category \F
\end{proposition}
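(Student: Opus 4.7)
The plan is to verify the three standard requirements for a presentation: that the listed arrows generate \F, that the listed relations hold in \F, and that they are complete. The first two are close to what has already been established. For generation, each \F-morphism factors by definition as an \F-surjection followed by an \F-injection, and Proposition~\ref{prop:CS-presentation} (together with its dual, available via the isomorphism $\Far \cong (\Fla)\oprev$ used in Theorem~\ref{thm:UI-univ-prop}) expresses each such factor as a composite of shrink and Tamari morphisms, respectively swell and Tamari morphisms. For validity, the \F-surjection and \F-injection relations hold in their subcategories hence in \F, while the four mixed relations \eqref{eq:UGrelation1}--\eqref{eq:UGrelation4} are exactly what was verified in Cases 1a, 1c, 1b, and 2 of the composition construction in Section~\ref{sect:lar}.

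The real content is therefore completeness. Let \G denote the category presented by the listed generators and relations, and $G\colon\G\to\F$ the comparison functor, which by the preceding steps is full and bijective on objects. I would prove faithfulness by exhibiting, for each \F-morphism $\phi\colon(M,u,S)\to(N,v,T)$, a canonical normal-form preimage in \G and checking that every word in the generators with image $\phi$ reduces to this preimage modulo the given relations. The normal form is read off from the unique surjection--injection factorization $\phi=\delta\sigma$ in \dbot (available since morphisms in \dbot preserve the bottom element): the surjective part is expressed in the form $\sigma_{j_1}\ldots\sigma_{j_r}$ with $j_1\le\ldots\le j_r$, the injective part dually, and the intermediate triangulation is fixed to be the \emph{maximal} one compatible with $\sigma$, shown to exist at the start of Section~\ref{sect:la}, with intermediate Tamari moves chosen along a fixed path in $\Tam_p$.

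The reduction of an arbitrary word then proceeds in stages: first apply \eqref{eq:UGrelation1}--\eqref{eq:UGrelation3} to push every $\delta$ to the right of every $\sigma$; then apply \eqref{eq:UGrelation4} together with the Tamari-lattice relations to eliminate any cancelling $\sigma_j \delta_j$ pairs, at the cost of introducing a Tamari step; and finally normalize the pure $\sigma$-block, pure $\delta$-block, and Tamari block using the presentations from Proposition~\ref{prop:CS-presentation}, its dual, and the Tamari lattice $\Tam_m$ respectively. The main obstacle is ensuring that this process terminates at a \emph{unique} normal form without having to verify confluence of the rewriting from scratch. I expect the cleanest route is to bypass confluence by exploiting the faithfulness of $U\colon\F\to\dbot$: uniqueness of the surjection--injection factorization in \dbot pins down the $\sigma$- and $\delta$-blocks of the normal form semantically, uniqueness of the maximal intermediate triangulation pins down the intermediate object, and the remaining Tamari decorations are constrained by the source and target lattice elements and the well-understood structure of $\Tam_m$. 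Granted uniqueness, one only needs to check that every word reaches \emph{some} normal form, which is a straightforward termination argument by induction on word length using the case analysis above.
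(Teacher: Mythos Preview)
Your plan for generation and for verifying the relations is fine and matches the paper. The gap is in completeness.

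Your argument hinges on the claim that the rewriting process drives every word to a single, \emph{a~priori} specified normal form---the one with ``the maximal intermediate triangulation compatible with $\sigma$''. But nothing in your rewriting steps forces termination at that particular intermediate: after pushing all injections past all surjections you land at \emph{some} factorization of the shape (Tamari)(shrink)(swell)(Tamari), and which intermediate $R\in\Tam_p$ you land at depends on the path taken. Your appeal to faithfulness of $U\colon\F\to\dbot$ tells you only that the underlying $\sigma$ and $\delta$ are forced; it says nothing about $R$, and it certainly does not show that two such factorizations are equal \emph{in the presented category}. So you have shown termination but not confluence, and you cannot bypass confluence by invoking semantic uniqueness, since that is precisely what remains to be proved. (Your citation to ``the start of Section~\ref{sect:la}'' does not help either: that proposition produces a maximal $L$ for a \emph{fixed} target, not a canonical $R$ for the whole \F-morphism.)

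The paper closes this gap differently and more directly. Rather than nominating a canonical intermediate and proving everything rewrites to it, it takes two arbitrary factorizations through $R_1$ and $R_2$ and exhibits a third factorization through $R_3=R_1\wedge R_2$ (using that meets of rbf's are pointwise, together with Proposition~\ref{prop:wedge-ells} and an induction via Proposition~\ref{prop:CS-factorization}), to which each of the originals is connected by Tamari moves already among the listed relations. That meet construction is the missing ingredient in your outline; once you have it, no global normal form or confluence check is needed.
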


\begin{proof}
The \F-surjections are generated by the generating Tamari morphisms and the generating shrink morphisms, while the \F-injections are generated by the generating Tamari morphisms and the generating swell morphisms. Since a general \F-morphism is a composite of an \F-surjection and an \F-injection it follows that the \F-morphisms are generated by the generating Tamari morphisms, the generating shrink morphisms, and the generating swell morphisms.

The given relations all hold in \Fsk since they hold in \dbot and $U$ is faithful. Why do these relations suffice? They certainly allow us to write any composite of the generators as an \Fsk-surjection followed by an \Fsk-injection, and the surjection in injection are uniquely determined as functions. Suppose then that we have morphisms as in the exterior (solid) part of the diagram$$\xymatrix{
& (\ord m,u,S_1) \ar[r]^{\sigma} & 
(\ord p,w,R_1) \ar[r]^{\delta}  & 
(\ord n,v,T_1) \ar[dr]^{1_{\ord n}} \\
(\ord m,u,S) \ar[ur]^{1_{\ord m}} \ar[dr]_{1_{\ord m}} \ar@{.>}[r]^{1_{\ord m}} &
(\ord m,u,S_3) \ar@{.>}[r]^{\sigma} \ar@{.>}[u]^{1_{\ord m}} \ar@{.>}[d]_{1_{\ord m}} & 
(\ord p,w,R_3) \ar@{.>}[u]^{1_{\ord p}} \ar@{.>}[d]_{1_{\ord p}} \ar@{.>}[r]^{\delta} & 
(\ord n,v,T_3) \ar@{.>}[u]^{1_{\ord n}} \ar@{.>}[d]_{1_{\ord n}}&
(\ord n,v,T) \\
& (\ord m,u,S_2) \ar[r]_{\sigma} & 
(\ord p,w,R_2) \ar[r]_{\delta} & (\ord n,v,T_2) \ar[ur]_{1_{\ord n}} }$$
in which the maps labelled $\sigma$ are shrink morphisms, the maps labelled $\delta$ are swell morphisms, the maps labelled $1$ are Tamari morphisms, and the exterior commutes. 

Write $T_3$ for the meet $T_1\wedge T_2$. Meets of right bracketing functions are calculated pointwise, so $r_{T_3}(j)=r_{T_1}(j)\wedge r_{T_2}(j)$ for all $j$. If $j<\delta(k)\le r_{T_3}(j)$, then we have $j<\delta(k)\le r_{T_i}(j)$ for $i=1,2$, and so $j\in\im(\delta)$. Thus there is a swell morphism $\delta\colon(\ord p,w,R_3)\to(\ord n,v,T_3)$, where $R_3$ is given by $r_{R_3}=\delta^* r_{T_3} \delta$. Since $\delta^*$ preserves meets, it follows that $r_{R_3}=r_{R_1}\wedge r_{R_2}$ and so that $R_3=R_1\wedge R_2$.
Write $S_3$ for the meet $S_1\wedge S_2$; clearly $S\le S_3$. To fill in the dotted part of the diagram, it remains to show that $\sigma$ defines a  shrink morphism from $(\ord m,u,S_3)$ to $(\ord p,w,R_3)$. We do this by induction on the length of $\sigma$. Choose $j$ minimal with $\sigma(j)=\sigma(j+1)$. By Proposition~\ref{prop:CS-factorization}, we know that $\sigma_j$ defines shrink morphisms $(\ord m,u,S_i)\to(\ord{m-1},u',S'_i)$ for $i=1,2$, and that $\sigma$ factorizes as $\sigma'\sigma_j$ in each case.  By Remark~\ref{rmk:shrink-r}, $S'_i$ can be defined by $r_{S'_i}=\sigma_j r_{S_i} \delta_{j-1}$. By Proposition~\ref{prop:wedge-ells} we know that $\ell_{S_3}(j)=j$, and so that $\sigma_j$ defines a shrink morphism from $(\ord m,u,S_3)$ to $(\ord{m-1},u',S'_3)$, and by Remark~\ref{rmk:shrink-r} once again we have $r_{S'_3}=\sigma_j r_{S_3}\delta_{j-1}$; finally, since $\sigma_j$ preserves meets, it follows that $S'_3=S'_1\wedge S'_2$. By Proposition~\ref{prop:CS-factorization} we know that $\sigma'$ defines a shrink morphism $(\ord{m-1},u',S'_3)\to(\ord p,w,R_3)$, and we may now continue by induction.

We have now constructed all of the dotted maps in the displayed diagram. Commutativity of each individual region follows from the given relations, thus the exterior diagram also commutes.
\end{proof}

\subsection{Skew monoidal structure and universal property}

Since we can tensor generators of each type, it follows that we can tensor arbitrary \F-morphisms, and the tensor product in $\dbot$ lifts to \F. By the faithfulness and strict preservation properties of the forgetful functor to $\dbot$ we conclude that \F is skew monoidal.

\begin{theorem}\label{thm:Fsk}
\F is the free
skew monoidal category generated by a single object.
\end{theorem}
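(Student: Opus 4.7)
\medskip

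The plan is to invoke the presentation of \F given in Proposition~\ref{prop:UG-presentation} and define the required functor $F\colon\F\to\C$ on generators, then verify the relations. On objects, $F(\ord m,u,S)$ must be the $m$-fold tensor product in \C, bracketed according to $S$, with the $i$-th factor equal to $C$ if $i\in u$ and equal to $I$ otherwise: this is forced by strict preservation of $\ox$, $I$, and of the generator. On morphisms, each generating Tamari morphism $a_{c,d}$ is sent to an instance of $\alpha$ whiskered on either side by identity morphisms; each generating shrink morphism $\sigma_j$ is sent to an instance of $\lambda$ whiskered on either side; and each generating swell morphism $\delta_i$ is sent to an instance of $\rho$ whiskered on either side. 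Uniqueness is immediate from the presentation: both $F$ on objects and $F$ on these three classes of generators are forced by strict structure-preservation and by sending the generator $X$ to $C$.

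For existence, I would first observe that the relations among \F-surjections and the relations among \F-injections have already been handled in Theorems~\ref{thm:US-univ-prop} and~\ref{thm:UI-univ-prop} (the second by the duality in Section~\ref{sect:r}): any skew monoidal \C is in particular left unital and right unital skew semimonoidal, so the compositions built from $\alpha$ and $\lambda$ alone, or from $\alpha$ and $\rho$ alone, behave as required. So the genuinely new work is to verify the four mixed relations~\eqref{eq:UGrelation1}--\eqref{eq:UGrelation4} together with their compatibility with the Tamari relations coming from rewriting through swell morphisms (the dual of~\eqref{eq:US-relation1} and~\eqref{eq:US-relation2}).

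The verification of the mixed relations I would carry out by case analysis on the relative positions of the inserted/removed $I$ (indices $i$ and $j$) and the triangulation of $S$, exactly as in the proof of Theorem~\ref{thm:US-univ-prop}. Specifically: relation~\eqref{eq:UGrelation1} ($i<j$, so $\lambda$ and $\rho$ act on disjoint tensor factors) follows from functoriality of $\ox$ together with a naturality square of $\lambda$ (or $\rho$) in the cases where the action of one lies in the scope of the other; relation~\eqref{eq:UGrelation3} ($i>j+1$) is symmetric. Relation~\eqref{eq:UGrelation2} ($i=j+1$) is exactly the instance where a $\rho$ inserts an $I$ immediately to the left of the position where a $\lambda$ then removes it via an intermediate $\alpha$: this is axiom~\eqref{midunit}, the middle unit condition. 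Relation~\eqref{eq:UGrelation4} is the case where, after inserting an $I$ with $\rho$, one must first rebracket via a Tamari morphism $a_{j,j+1}$ before the new $I$ can be cancelled by $\lambda\circ\alpha$; it reduces to axiom~\eqref{leftunit} (or, at a boundary, to~\eqref{rightunit} or~\eqref{unitunit}) after identifying the minimal subword on which the rewrite takes place, in the same manner that the proof of Theorem~\ref{thm:US-univ-prop} restricted to the interval $[j,r_S(j+1)]$.

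The main obstacle I anticipate is bookkeeping: in relations~\eqref{eq:UGrelation1} and~\eqref{eq:UGrelation3}, one must enumerate the positions where the $\rho$ and the $\lambda$ can sit relative to the triangle $(\ell_S(j),j+1,r_S(j+1))$ that controls the $\sigma_j$, exactly as the six-case analysis in Theorem~\ref{thm:US-univ-prop} did for~\eqref{eq:US-relation1}. Once this case split is carried out, each subcase collapses either to functoriality of $\ox$, to naturality of $\lambda$ or $\rho$, or to one of the five skew monoidal axioms, and faithfulness of $U\colon\F\to\dbot$ ensures no hidden identifications have been missed. Tensoring of morphisms and composition are then automatically preserved once the generator-level definition respects the presentation, so \F is indeed free on the object $X$ as claimed.
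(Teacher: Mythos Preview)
Your overall strategy matches the paper's exactly: invoke the presentation of Proposition~\ref{prop:UG-presentation}, define $F$ on generators (forced by strict structure preservation), appeal to Theorems~\ref{thm:US-univ-prop} and~\ref{thm:UI-univ-prop} for the relations internal to \F-surjections and \F-injections, and then verify the four mixed relations \eqref{eq:UGrelation1}--\eqref{eq:UGrelation4}. The paper's proof is in fact shorter than you anticipate: for \eqref{eq:UGrelation1} and \eqref{eq:UGrelation3} it simply says ``functoriality and naturality'' with no case split, and for \eqref{eq:UGrelation2} and \eqref{eq:UGrelation4} it exhibits the relevant axiom directly.

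However, you have misidentified which skew-monoidal axiom discharges which relation, and this is not a cosmetic slip. Relation~\eqref{eq:UGrelation2} contains \emph{no} Tamari morphism: it is the bare triangle $\sigma_j\circ\delta_{j+1}=1$ with $U=V$. There is no ``intermediate $\alpha$'' here. After localizing to the minimal subword, this becomes $\lambda_I\circ\rho_I=1_I$, which is axiom~\eqref{unitunit}, not~\eqref{midunit}. Conversely, relation~\eqref{eq:UGrelation4} \emph{does} contain the Tamari step $a_{j,j+1}$: it is $\sigma_j\circ a_{j,j+1}\circ\delta_j=1$, and this is precisely the shape $(1\ox\lambda)\circ\alpha\circ(\rho\ox1)=1$ of axiom~\eqref{midunit}, not~\eqref{leftunit}. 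The left-unit and right-unit axioms~\eqref{leftunit} and~\eqref{rightunit} are not needed for any of the mixed relations; they were already absorbed in the proofs of Theorems~\ref{thm:US-univ-prop} and~\ref{thm:UI-univ-prop} respectively (for the relations~\eqref{eq:US-relation2} and its dual). So your proposed reductions for \eqref{eq:UGrelation2} and \eqref{eq:UGrelation4} would not go through as stated.
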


\begin{proof}
Let \C be a skew monoidal category and $C$ an 
object of \C. We need to show that there is a unique
strict skew monoidal functor from \F to \C which sends the generator
$X=(1,1,*)$ to $C$. 

By Theorems~\ref{thm:US-univ-prop} and~\ref{thm:UI-univ-prop} we know what $F$ must do on \Fla and \Far, and so on all \F-surjections and \F-injections. Thus the uniqueness part is immediate.
All that remains to show is that if we define $F$ 
on \F-surjections and \F-injections in this way then we
do indeed obtain a functor; the fact that it strictly
preserves the skew monoidal structure is immediate.

Functoriality will follow if we can show that 
$F$ respects each of the relations 
\eqref{eq:UGrelation1}, \eqref{eq:UGrelation2},
\eqref{eq:UGrelation3},
and \eqref{eq:UGrelation4}.

Of these, \eqref{eq:UGrelation1} and \eqref{eq:UGrelation3} are respected thanks to functoriality and 
naturality; we need only worry about
\eqref{eq:UGrelation2} and \eqref{eq:UGrelation4}. But these are respected thanks, respectively, to the commutativity of the diagrams
$$\xymatrix{
& I\ox I \ar[dr]^{\lambda} \\
I \ar[ur]^{\rho} \ar[rr]_{1} && I }\quad
\xymatrix{
(A\ox I)\ox B \ar[r]^{\alpha} & A\ox(I\ox B) \ar[d]^{1\ox\lambda} \\
A\ox B \ar[u]^{\rho\ox 1} \ar[r]_1 & A\ox B}$$
in \C.
\end{proof}

As well as the explicit construction just completed, we also have the following more qualitative result.

\begin{corollary}\label{cor:faithful}
 The skew monoidal category freely generated by a single object admits a unique faithful structure-preserving functor into \dbot which sends the generating object to the unit of \dbot.
\end{corollary}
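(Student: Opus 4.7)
The plan is to assemble the two halves of the statement from results already established in the paper, without introducing any new construction. By Theorem~\ref{thm:Fsk}, \F is the free skew monoidal category on a single object, with generator $X=(1,1,*)$. Applied to the skew monoidal category \dbot with its distinguished object $\ord 1$, the universal property produces a unique strict skew monoidal functor $F\colon\F\to\dbot$ with $F(X)=\ord 1$. This yields uniqueness of \emph{any} structure-preserving functor satisfying $F(X)=\ord 1$, and a fortiori of a faithful one.

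For existence, I would appeal to the functor $U\colon\F\to\dbot$ whose construction runs through Sections~\ref{sect:l}--\ref{sect:lar}. The objects of \F are triples $(\ord m,u,S)$ projecting onto $\ord m\in\dbot$; the tensor product and unit were defined so that $U$ strictly preserves both, the generator $X=(1,1,*)$ projects to the unit $\ord 1$, and the components of $\alpha$, $\lambda$, $\rho$ were built using the identity and the canonical simplicial maps $\sigma_0$, $\delta_n$ of \dbot, so that $U$ strictly preserves all of the skew monoidal structure. By the uniqueness just discussed, this $U$ is the very functor $F$ delivered by the universal property.

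Faithfulness of $U$ is immediate from the definitions. By Section~\ref{sect:lar}, an \F-morphism $(\ord m,u,S)\to(\ord n,v,T)$ is nothing more than an order-preserving function $\phi\colon\ord m\to\ord n$ lying in \dbot which admits a suitable factorization through some object $(\ord p,w,R)$; the \F-morphism carries no data beyond $\phi$ itself (the intermediate $R$, when canonically chosen, is determined by $\phi$, $S$ and $T$, as in the proof of Proposition~\ref{prop:UG-presentation}). Consequently, two \F-morphisms with a common source and target which agree as order-preserving maps are equal, which is precisely faithfulness of $U$.

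I do not anticipate any real obstacle: the substantive work has already been done in constructing \F and verifying that the forgetful functor $U$ is well defined and strictly structure-preserving. The only point requiring care is to observe that faithfulness is built into the definition of an \F-morphism, so that it follows directly rather than requiring a separate verification.
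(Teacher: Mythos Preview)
Your proposal is correct and is precisely the argument the paper has in mind: the corollary is stated without separate proof because uniqueness is the universal property of Theorem~\ref{thm:Fsk} applied to \dbot with chosen object $\ord 1$, and existence with faithfulness is the already-constructed $U\colon\F\to\dbot$, whose faithfulness is explicit (an \F-morphism \emph{is} an order-preserving map satisfying a factorization property, so $U$ merely forgets the source and target decorations).
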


Since \dbot is a skew monoidal category, if we can find some equation between expressions in the structure morphisms which does not hold in \dbot, then it clearly cannot follow from the skew monoidal category axioms. The corollary means that we have a converse: if such an equation does hold in \dbot then it holds in \Fsk, and so holds in all skew monoidal categories. 

Consider, for example, the idempotent $\epsilon_0$ defined in Section~\ref{sect:axioms}, and given by the composite of $\lambda_I\colon I\ox I\to I$ followed by $\rho_I\colon I\to I\ox I$. In \dbot, this amounts to the unique surjection $\ord 2\to\ord 1$ followed by the injection $\ord 1\to \ord 2$ with image the bottom element of 
\ord 2. This composite is clearly not the identity, and so $\epsilon_0$ need not be the identity; similarly, one sees that $\epsilon^\ell_{X,Y}$ and $\epsilon^r_{X,Y}$ need not be identities.

On the other hand, to see that the composite 
$$\xymatrix{
(W(XI))Y \ar[r]^{(\rho1)1} & ((WI)(XI))Y \ar[r]^{\alpha1} & (W(I(XI)))Y \ar[r]^{\alpha} & W((I(XI))Y) \ar[d]^{1(\lambda1)} \\ &&& W((XI)Y)
~, }$$
in which tensor products have been written as juxtaposition, is equal to $\alpha_{W,XI,Y}$, it suffices to check that this is the case in \dbot. Recalling that in \dbot all instances of $\alpha$ are just the identity, this composite becomes 
$$\xymatrix{
{}\ord 4 \ar[r]^{\delta_1} & \ord 5 \ar[r]^{\sigma_1} & \ord 4 }$$
which is indeed equal to the identity.

\section{The club for skew monoidal categories}
\label{sect:club}

The notion of club \cite{Kelly-AbstractApproachCoherence} was introduced precisely in order to deal with coherence problems for categories with (certain types of) structure. Within that context, the structure of skew monoidal category is of the simplest type:  what was called a {\em club over \NN} in \cite{Kelly-AbstractApproachCoherence}. These clubs over \NN are in fact equivalent to non-symmetric \Cat-enriched operads; this is related to the equivalence between the (monoidal) categories $\Cat^\NN$ of $\NN$-indexed families of categories, and $\Cat/\NN$ of categories equipped with a functor into the discrete category \NN. 

A non-symmetric \Cat-enriched operad  consists of a family $(A_n)_{n\in\NN}$ of categories, indexed by the natural numbers. These are equipped with ``multiplication'' functors
$$A_n\x A_{m_1}\x\ldots\x A_{m_n}\to A_{m_1+\ldots+m_n}$$
satisfying associativity conditions, and an object of $A_1$ serving as a unit. To give a family $(A_n)_{n\in\NN}$ is equivalently to give a category $A$ equipped with a functor into the discrete category \NN; the $A_n$ are then the fibres of the functor. It is this functor $A\to\NN$ which is used in the club point of view; the multiplication and unit can be expressed in terms of $A$ as well. 

In either case, one defines algebras for the club or operad, and the free algebra on a category $X$ is given by the familiar sum $$\sum_{n\in\NN} A_n\x X^n$$
equipped with an algebra structure coming from the multiplication.  In particular, the free algebra on \ord 1 has underlying category $\sum_{n\in\NN} A_n$; in other words, what we were calling $A$.

When we said, in the introduction, that in the club situation the free structure on any category could be obtained from the free structure on \ord 1, we were guilty of a slight oversimplification; what is actually needed is the free structure $A$ on \ord 1, together with the associated functor $A\to\NN$. This is then enough to extract the $A_n$ and so form the sum $\sum_n A_n\x X^n$. While abstractly this ``augmentation'' $A\to\NN$ is extra information, in practice it is generally easy to write down. The objects of $A$ correspond to operations $X^n\to X$ in the structure in questions, and the augmentation merely records the arity (in this case, $n$) of each such operation.

The structures that can be described using clubs over \NN (or equivalently using non-symmetric \Cat-operads) are of the following type. One has a category $X$ equipped with various functors $f\colon X^{n_f}\to X$, with domain $X^{n_f}$ a finite (discrete) power of $X$. These are thought of as operations of arity $n_f$. From these ``basic'' operations, the existence of further operations can be derived: for instance, given $f\colon X^n\to X$ and $g\colon X^m\to X$, we can substitute $g$ into $f$ in (say) the $j$th position, giving a functor 
$$\xymatrix{
X^{n+m-1} \ar[rrr]^{1_{X^{j-1}}\x g\x 1_{X^{m-j}}} && &
X^n \ar[r]^{f} & X. }$$
Then there are natural transformations given between these derived operations. From these ``basic'' natural transformations, further natural transformations can be derived, by composition of transformations, by substituting a natural transformation into an operation, and by substituting an operation into a natural transformation. Finally equations, either between operations or between natural transformations, can be imposed. 

(In this description, we have spoken of operations and of natural transformations, but the natural transformations themselves can usefully be regarded as higher-dimensional operations.)

\begin{example}
In the case of skew monoidal categories, one starts withthe operations $X^2\to X$ and $X^0\to X$ of tensor product and unit. From these one derives two operations $X^3\to X$ using the two possible ternary bracketings, and three operations $X\to X$ given by the identity and tensoring on either side with the unit. The three lax constraints are now natural transformations between these derived operations; finally the five axioms are equations between derived natural transformations. Thus the structure of skew monoidal category can indeed be described by a club over \NN.
\end{example}

After this all-too-scant overview of clubs, we turn to the description of the free skew monoidal category $\Fsk(\XX)$ on a category \XX, using the description of $\Fsk=\Fsk(\ord 1)$ given in the previous section.

First, however, we must describe the augmentation $\Fsk(\ord 1)\to\NN$. This associates to each object $(\ord m,u,S)$ the arity of the corresponding operation. Now $(m,u,S)$ corresponds to an $m$-fold product, bracketed according to $S$, which might sound like an $m$-ary operation. But any $I$'s appearing in the product are actually the output of a nullary operation, so the true arity is the number of $X$'s, which is the cardinality of~$u$.

We can now use the general theory of clubs \cite{Kelly-AbstractApproachCoherence} to write down an explicit description of the free skew monoidal category $\F(\XX)$ on an arbitrary category \XX.

\begin{theorem}
  An object of the free skew monoidal category on a category \XX is an object $(\ord m,u,S)$ of \Fsk, equipped with a $u$-indexed family $X=(X_i)_{i\in u}$ of objects of \XX. A morphism in from $(\ord m,u,S,X)$ to $(\ord n,v,T,Y)$ consists of a morphism $\phi\colon(\ord m,u,S)\to(\ord n,v,T)$ in \Fsk, along with a $u$-indexed family $x=(x_i)_{i\in u}$ of morphisms in \XX, where $x_i\colon X_i\to Y_{\phi i}$. 
\end{theorem}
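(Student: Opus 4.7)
The plan is to apply the theory of clubs over $\NN$ recalled above to $\F$ itself. The first step is to promote $\F$ to a club over $\NN$ by providing the augmentation $\epsilon\colon \F \to \NN$. I would define $\epsilon$ on objects by $(\ord m, u, S) \mapsto |u|$; this records the number of $X$-slots in the bracketed word, which is the true arity. For functoriality I would use the fact, recorded in the remark preceding the composition discussion in Section~\ref{sect:lar}, that any $\F$-morphism $\phi\colon (\ord m, u, S) \to (\ord n, v, T)$ and its right adjoint $\phi^*$ induce mutually inverse bijections $u \cong v$; so $|u| = |v|$ and $\epsilon$ is well-defined. Strict preservation of the skew monoidal structure, with $\NN$ given the strict monoidal structure of addition, amounts to the identities $|\emptyset| = 0$ for the unit $(1, \emptyset, *)$ and $|u \star v| = |u| + |v|$ for the tensor, both immediate from the definition of $u\star v$ as a disjoint union.

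Having identified $\F$ as a club over $\NN$---and recalling from Theorem~\ref{thm:Fsk} that it is the free one presenting the structure of skew monoidal category---I would invoke Kelly's general theorem. It furnishes an explicit description of the free algebra on any category $\XX$ as
$$\Fsk(\XX) \;=\; \sum_{n \in \NN} \F_n \times \XX^n\,,$$
where $\F_n = \epsilon^{-1}(n)$ is the full subcategory on objects of arity $n$, and the skew monoidal structure is inherited from the operad multiplication on $\F$. An object of the $n$-th summand is a pair $((\ord m, u, S), (X_1, \ldots, X_n))$ with $|u|=n$, and a morphism is a pair $(\phi, (x_1, \ldots, x_n))$ consisting of a morphism $\phi$ of $\F_n$ together with an $n$-tuple of morphisms in $\XX$.

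Finally, I would translate the $\{1, \ldots, n\}$-indexing in the product into the $u$-indexing used in the statement of the theorem, via the unique order-preserving bijection $\{1, \ldots, n\} \cong u$. On objects this turns $(X_1, \ldots, X_n)$ into the family $(X_i)_{i \in u}$. On morphisms, the analogous bijections $\{1, \ldots, n\} \cong u$ and $\{1, \ldots, n\} \cong v$, together with the fact that $\phi$---being order-preserving---restricts to the \emph{unique} order-preserving bijection $u \to v$, imply that an $n$-tuple $x_k \colon X_k \to Y_k$ corresponds exactly to a $u$-indexed family $x_i \colon X_i \to Y_{\phi(i)}$.

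The substantive content having all been done in Theorem~\ref{thm:Fsk} and in Kelly's club theory, the main obstacle, minor though it is, lies in matching the $\{1, \ldots, n\}$-indexed and $u$-indexed presentations, particularly on morphisms; this is where the order-preservation of $\phi$ is crucial.
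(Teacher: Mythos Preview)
Your proposal is correct and takes essentially the same approach as the paper: the paper does not give a formal proof of this theorem, but simply identifies the augmentation $\Fsk\to\NN$ as $(\ord m,u,S)\mapsto |u|$ and then invokes Kelly's general theory of clubs to write down the free algebra $\sum_n \F_n\times\XX^n$. Your account is in fact slightly more detailed than the paper's, in that you spell out the translation between $\{1,\ldots,n\}$-indexing and $u$-indexing and note that the order-preservation of $\phi$ forces its restriction to be the unique order-isomorphism $u\cong v$.
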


\enlargethispage{\baselineskip}
\enlargethispage{\baselineskip}

\bibliographystyle{amsplain}

\end{document}